\newtheorem{theorem}{Theorem}[section]
\newtheorem{conjecture}[theorem]{Conjecture}
\newtheorem{proposition}[theorem]{Proposition}
\newtheorem{lemma}[theorem]{Lemma}
\newtheorem{corollary}[theorem]{Corollary}
\newtheorem{example}[theorem]{Example}
\newtheorem{claim}[]{Claim}
\theoremstyle{definition}
\newtheorem{definition}[theorem]{Definition}
\newtheorem{remark}[theorem]{Remark}
\numberwithin{equation}{section}
\newcommand{\dv}{\mathrm{div}}
\newcommand{\mb}{\mathbb}
\newcommand{\mc}{\mathcal}
\newcommand{\oli}{\overline}
\newcommand{\wti}{\widetilde}
\newcommand{\Sc}{\mathrm{Sc}}
\newcommand{\mr}{\mathrm}
\newcommand{\n}{\mathbf n}
\DeclareMathOperator{\Ric}{Ric}
\DeclareMathOperator{\Div}{div}
\numberwithin{equation}{section}
\newcommand{\R}{\mathbb{R}}
\newcommand{\tr}{\mathrm{tr}}
\newcommand{\sph}{\mathbb{S}}
\newcommand{\normal}{\mathbf{n}}
\DeclareMathOperator{\radsphere}{Rad_{\mathbb{S}^n}}
\DeclareMathOperator{\radspherek}{Rad_{\mathbb{S}^{n-1}}}
\newcommand{\interior}[1]{%
	{\kern0pt#1}^{\mathrm{\,o}}%
}
\let\save@mathaccent\mathaccent
\newcommand*\if@single[3]{%
	\setbox0\hbox{${\mathaccent"0362{#1}}^H$}%
	\setbox2\hbox{${\mathaccent"0362{\kern0pt#1}}^H$}%
	\ifdim\ht0=\ht2 #3\else #2\fi
}
\newcommand*{\transpose}{%
	{\mathpalette\@transpose{}}%
}
\newcommand*{\@transpose}[2]{%
	\raisebox{\depth}{$\m@th#1\intercal$}%
}
\def\pgfutil@Repeat#1#2{#2\ifnum#1>0
\expandafter\pgfutil@firstofone\else\expandafter\pgfutil@gobble\fi
{\expandafter\pgfutil@Repeat\expandafter{\the\numexpr#1-1\relax}{#2}}}
\tikzset{
  dash between/.code args={#1 and #2}{%
    \tikz@addoption{%
      \pgfgetpath\currentpath
      \pgfprocessround{\currentpath}{\currentpath}%
      \pgf@decorate@parsesoftpath{\currentpath}{\currentpath}%
      \pgfmathsetlengthmacro\firstpart{(#1)*\pgf@decorate@totalpathlength}%
      \pgfmathsetlengthmacro\secondpart{(#2-(#1))*\pgf@decorate@totalpathlength}%
      \pgfmathsetlengthmacro\thirdpart{(1-(#2))*\pgf@decorate@totalpathlength}%
      \edef\thirdpart{{\thirdpart}{0pt}}%
      \edef\firstpart{{\firstpart}{0pt}}%
      \pgfmathsetlengthmacro\secondpartlength{\pgfkeysvalueof{/tikz/dash between on}
                                            +(\pgfkeysvalueof{/tikz/dash between off})}%
      \pgfmathtruncatemacro\repetitions{\secondpart/\secondpartlength}%
      \pgfmathsetlengthmacro\secondexpand{\secondpart/\repetitions-\secondpartlength}%
      \edef\secondexpand{\the\dimexpr\pgfkeysvalueof{/tikz/dash between off}+\secondexpand\relax}%
      \edef\secondpart{%
        \pgfutil@Repeat{\the\numexpr\repetitions-1\relax}%
          {{\pgfkeysvalueof{/tikz/dash between on}}{\secondexpand}}%
      }%
      \edef\tikz@temp{\firstpart\secondpart\thirdpart}%
      \expandafter\pgfsetdash\expandafter{\tikz@temp}{+0pt}%
    }
  }
}
\tikzset{
  dash between style/.is choice,
  dash between style/dotted/.style        ={dash between on=\pgflinewidth,dash between off=2pt},
  dash between style/densely dotted/.style={dash between on=\pgflinewidth,dash between off=1pt},
  dash between style/loosely dotted/.style={dash between on=\pgflinewidth,dash between off=4pt},
  dash between style/dashed/.style        ={dash between on=3pt,dash between off=2pt},
  dash between style/loosely dashed/.style={dash between on=3pt,dash between off=6pt},
  dash between style/densely dashed/.style={dash between on=3pt,dash between off=2pt},
  dash between style/no/.style={dash between on=0pt, dash between off=1pt},
  dash between on/.initial=\pgflinewidth,
  dash between off/.initial=2pt,
  middle dotted line/.style={
    thick,
    dash between=.35 and .65}}
\begin{document}
	
\title[Scalar-mean rigidity theorem]{Scalar-mean  rigidity theorem for compact manifolds with boundary}
\author{Jinmin Wang}
\address[Jinmin Wang]{Institute of Mathematics, Chinese Academy of Sciences}
\email{\url{jinmin@amss.ac.cn}}
\thanks{}

\author{Zhichao Wang}
\address[Zhichao Wang]{Shanghai Center for Mathematical Science, 2005 Songhu Road, Fudan University, Shanghai, 200438, China}
\email{\url{zhichao@fudan.edu.cn}}
\thanks{}

\author{Bo Zhu}
\address[Bo Zhu]{Department of Mathematics, Texas A\&M University}
\email{\url{bozhu@tamu.edu}}
\thanks{The third author is partially supported by NSF 1952693, 2247322 and AMS-Simons Travel Grant}

\begin{abstract}
We prove a scalar-mean rigidity theorem for compact Riemannian manifolds with boundary in dimension less than five by developing a dimension reduction argument for mean curvature, which extends Schoen-Yau's dimension reduction argument for scalar curvature. As a corollary, we prove the sharp spherical radius rigidity theorem and best NNSC fill-in in terms of the mean curvature. Moreover, we prove a Lipschitz Listing type scalar-mean rigidity theorem for these dimensions. 
 \end{abstract}
\maketitle

\section{Introduction}
Comparison geometry is a significant topic in metric geometry and geometric analysis. The study of Ricci curvature and sectional curvature in comparison geometry has made substantial progress (see  \cites{Li_geometric_analysis, Schoen_Yau_lectures,Yau_perspective_geometric_analysis, Comparison_geometry,Gromov_four_lectures}). However, the corresponding problems related to the scalar curvature remain understudied. Recently, Gromov proposed to study topics related to the scalar curvature and its companion, mean curvature, in \cite{Gromov_mean_light_scalar}. Currently, using the (higher) index theory on \textbf{spin} Riemannian manifolds (see \cites{Stolz_psc,Gromov-Lawson_Dirac,Yu_zero_psc,Rosen_psc_novikov}) and $\mu$-bubble (soap bubble) (see \cites{Schoen_Yau_psc_higher,Schoen_Yau_imcompressible,Schoen_Yau_lectures,Chodosh_Li_soap_bubble,Zhu_Width,Liok_Zhu_cycle,Gromov_5d}) in Riemannian manifolds are two important tools for studying the geometry and topology of Riemannian manifolds with scalar curvature constraints.

\vspace{2mm}
Let us start with the following scalar curvature rigidity theorem on smooth, closed, \textbf{spin} Riemannian manifolds.
\begin{theorem} \label{thm: Llarull_Listing}
    Suppose that $(M^n,g)$ is a closed, smooth, \textbf{spin} Riemannian manifold and $F\colon (M,g)\to(\sph^n,g_{\sph^n})$ is a smooth map of $\deg(F) \neq 0$\footnote{$f_*([M]) = \deg(F) [\mathbb{S}^n]$}. 
    \begin{enumerate}
        \item $( $\text{Llarull}, see   \cite{Llarull}*{Theorem B}$)$ If
    $\|\wedge^2\mr dF\|\leq 1,  \Sc_g\geq n(n-1),$
     then $F$ is an isometry. Here, $\|\wedge^2\mr d F\|$ is the norm of $\wedge^2\mr dF\colon \wedge^2TM\to \wedge^2T\sph^n$,
\vspace{1mm}
    \item \label{item: Listing} $($\text{Listing}, see  \cite{Listing:2010te}*{Theorem 2}$)$ If  $\Sc_g\geq \|\wedge^2\mr dF\|\cdot n(n-1)$, then $F$ is an isometry.
    \end{enumerate}
    
\end{theorem}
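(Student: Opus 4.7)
The plan is to prove both statements simultaneously via the twisted spin Dirac operator, noting that Llarull's hypothesis ($\|\wedge^2\mr dF\|\leq 1$ and $\Sc_g\geq n(n-1)$) implies Listing's ($\Sc_g \geq \|\wedge^2\mr dF\|\cdot n(n-1)$). First I would form the pullback bundle $E := F^*\mc{S}(\sph^n)$, taking the positive half-spinor bundle when $n$ is even, equipped with the pulled-back Levi-Civita connection, and consider the twisted spin Dirac operator $D_E$ on $\mc{S}(M)\otimes E$. The Atiyah--Singer index formula gives
\[
\ind(D_E) = \int_M \widehat{A}(M)\,\Ch(E),
\]
and since $\Ch(\mc{S}(\sph^n))$ pairs non-trivially with the fundamental class of $\sph^n$ while $F_*[M] = \deg(F)[\sph^n]$, the index is a nonzero multiple of $\deg(F)$. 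Hence $\kr D_E \neq 0$, producing a non-trivial harmonic twisted spinor $\sigma$.

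Next I would invoke the Lichnerowicz--Weitzenb\"ock identity
\[
D_E^2 = \nabla^*\nabla + \frac{\Sc_g}{4} + \mc{R}^E,
\]
where $\mc{R}^E$ is the curvature endomorphism obtained by Clifford-contracting the curvature $R^E$ of $E$. The crucial pointwise algebraic estimate, due to Llarull, is
\[
\mc{R}^E \geq -\frac{n(n-1)}{4}\,\|\wedge^2\mr dF\|\cdot\id
\]
as an endomorphism of $\mc{S}(M)\otimes E$. One establishes it by diagonalizing $\mr dF^{\transpose}\mr dF$ with singular values $\lambda_1,\dots,\lambda_n$, expressing $\mc{R}^E$ in terms of Clifford products weighted by $\lambda_i\lambda_j$, and observing that $\|\wedge^2\mr dF\|$ equals the maximum of $\lambda_i\lambda_j$ for $i\neq j$.

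Integrating the Weitzenb\"ock identity against $\sigma$ and applying the Listing-type hypothesis forces both $\nabla\sigma\equiv 0$ on $M$ and pointwise saturation of the Llarull estimate on $\supp(\sigma)$; unique continuation for harmonic spinors then propagates this to all of $M$. The main obstacle I anticipate is the \emph{rigidity extraction}: one must show that the pointwise equality $\langle\mc{R}^E\sigma,\sigma\rangle = -\tfrac{n(n-1)}{4}\|\wedge^2\mr dF\||\sigma|^2$ forces every singular value of $\mr dF$ to equal $1$ at every point. This requires a careful analysis of which Clifford identities can saturate the estimate given the decomposition of $\sigma$ along the diagonalizing frame; once achieved, smoothness of $F$ together with $\deg(F)\neq 0$ upgrade the resulting pointwise isometric differential to a global isometry.
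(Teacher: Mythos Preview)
The paper does not supply a proof of this theorem: it is stated in the introduction as known background, with explicit citations to Llarull and Listing, and the paper's own contributions concern the scalar-mean rigidity results (Theorems \ref{mainthm: scalar_mean_curvature_comparsion}, \ref{thm: generalized_scalar_mean_comparsion}, \ref{thm:Lipschitz}) proved by capillary $\mu$-bubble and dimension reduction, not by Dirac operator methods. So there is nothing in the paper to compare your proposal against.

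That said, your outline is the standard Llarull--Listing argument and is essentially correct as a sketch. Two points worth tightening if you write it out in full. First, the index computation: for odd $n$ the positive half-spinor bundle of $\sph^n$ does not exist, and one typically works with the full (ungraded) spinor bundle and an appropriate odd Dirac operator, or passes to $M\times\sph^1$; make sure your index argument handles both parities. Second, your ``rigidity extraction'' paragraph correctly identifies the delicate step, but note that under the Listing hypothesis alone the equality case in the curvature estimate does not immediately force all singular values $\lambda_i$ to equal $1$; a priori it only forces pointwise equality $\Sc_g = n(n-1)\|\wedge^2\mr dF\|$ together with the algebraic constraint coming from the parallel spinor. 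One needs to argue (as Listing does) that the parallel spinor, combined with the explicit Clifford structure of $\mc R^E$, rules out degenerate configurations where some $\lambda_i=0$ on an open set, and then conclude that $\mr dF$ is a pointwise isometry. This is where the work lies, and your sketch acknowledges but does not resolve it.
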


Gromov proposes to study the geometry and topology of the mean curvature alongside scalar curvature (see \cites{Gromov_mean_light_scalar}). The scalar curvature rigidity theorem has been generalized to the scalar-mean rigidity theorem for compact, \textbf{spin} Riemannian manifolds with nonempty boundary  using index theory techniques. For example, a recent series of works \cites{Lottboundary,Wang:2022vf,MR3257837,Simone24,Wang:2021tq} have proved the scalar-mean rigidity theorem for smooth, compact, \textbf{spin} Riemannian manifolds with non-empty boundary.  Suppose that $(M^n, \partial M, g)$ is a smooth, compact, \textbf{spin} Riemannian manifold with nonnegative scalar curvature $\Sc_g \geq 0$ and uniformly positive mean curvature  $H_{\partial M} \geq n-1$\footnote{ $H_{\partial M}$ means the mean curvature of $\partial M$. For example, the mean curvature of unit $(n-1)$-sphere in the unit $n$-ball is equal to $(n-1)$.}. If $F: (\partial M, g_{\partial M}) \rightarrow (\mathbb{S}^{n-1},g_{\mathbb{S}^{n-1}})\footnote{$(\mathbb{S}^{n-1},g_{\mathbb{S}^{n-1}})$ is the standard unit $(n-1)$-sphere in $\mathbb{R}^{n}$.}$ is a distance non-increasing  map of $\deg(F)  \neq 0$, then $F$ is an isometry. Indeed, such scalar-mean rigidity holds similarly for more general manifolds with non-negative curvature operator and non-negative second fundamental form (see \cite{Lottboundary}*{Theorem 1.1}). Moreover, in the spin setting, the scalar-mean comparison results also hold for special domains in the warped product metric (see \cites{Cecchini:2021vs,ChaiWan24} for details).

\vspace{2mm}

Moreover, Gromov conjectures that the scalar-mean rigidity theorem holds without the spin assumption and suggests the approach of the capillary $\mu$-bubble (see \cite{Gromov_four_lectures}*{Section 5.8.1} for details). In this paper, without relying on any index theory techniques such as those in \cites{Lottboundary,Wang:2022vf,MR3257837,WangXieEu,Simone24}, we make use of the capillary $\mu$-bubble techniques together with dimension reduction for mean convex boundary and then prove a scalar-mean rigidity theorem for smooth compact Riemannian manifolds with smooth map $F$ as follows. 

\begin{theorem} \label{mainthm: scalar_mean_curvature_comparsion}
    Suppose that $(M^n, \partial M, g)$, $n=2, 3, 4$ is a smooth compact Riemannian manifold with non-negative scalar curvature $\Sc_g \geq 0$ and uniformly positive mean curvature $H_{\partial M} \geq n-1$. If $F: (\partial M, g_{\partial M}) \rightarrow (\mathbb{S}^{n-1}, g_{\mathbb{S}^{n-1}})$ is a distance non-increasing smooth map of $\deg(F)  \neq 0$, then 
    \begin{enumerate}
        \item $F$ is an isometry,
        \item $(M, g)$ is isometric to $(\mathbb{D}^n, g_{\mathbb{D}^n})\footnote{$(\mathbb{D}^n, g_{\mathbb{D}^n})$ means the standard unit disk in $\mathbb{R}^n$.}$.
    \end{enumerate}  
\end{theorem}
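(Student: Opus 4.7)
I would induct on the dimension $n$, handling $n=2$ by the Gauss-Bonnet theorem and reducing $n=3,4$ to lower dimensions via \emph{capillary $\mu$-bubbles}. For $n=2$, Gauss-Bonnet gives
\[
2\pi\chi(M) = \int_M K + \int_{\partial M}\kappa_g,
\]
with $K = \Sc_g/2 \geq 0$ and $\kappa_g = H_{\partial M}\geq 1$, while distance non-increase of $F$ combined with $\deg F\neq 0$ forces $\mathrm{length}(\partial M)\geq 2\pi|\deg F|\geq 2\pi$. Hence $\chi(M)\geq 1$, and equality is forced throughout: $M$ is a topological disk, $K\equiv 0$, $\kappa_g\equiv 1$, $|\deg F|=1$, and $\mathrm{length}(\partial M)=2\pi$. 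A flat disk bounded by a unit-curvature circle of length $2\pi$ is isometric to $(\mathbb{D}^2,g_{\mathbb{D}^2})$, and a degree $\pm 1$ distance non-increasing map between circles of equal length is an isometry.

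\textbf{Inductive step ($n=3,4$).} I would fix antipodal points $q_\pm\in\mathbb{S}^{n-1}$ with small neighborhoods $U_\pm$ and minimize a capillary $\mu$-bubble functional
\[
\mathcal A(\Omega) = \mathcal H^{n-1}(\partial^*\Omega\cap \mathrm{int}(M)) - \int_\Omega h\,d\mathrm{vol} + \int_{\partial^*\Omega\cap\partial M}\cos\theta\,d\mathcal H^{n-1}
\]
over Caccioppoli sets $\Omega\subset M$ with $F^{-1}(U_-)\subset\Omega$ and $\Omega\cap F^{-1}(U_+)=\emptyset$. Here $h\in C^\infty(M)$ and $\theta\in C^\infty(\partial M;(0,\pi))$ are chosen so that barriers, together with the distance hypothesis on $F$, keep the minimizer $\Sigma^{n-1}$ strictly in the admissible class. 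Standard regularity (available for $\dim\Sigma\leq 3$) produces a smooth $\Sigma$ satisfying $H_\Sigma = h$ and making contact angle $\theta$ with $\partial M$ along $\partial\Sigma$. Stability of $\Sigma$, combined with the Gauss equation
\[
\Sc_M = 2\,\mathrm{Ric}_M(\nu,\nu) + \Sc_\Sigma - |A|^2 - H_\Sigma^2,
\]
the hypothesis $\Sc_M\geq 0$, and a Schoen-Yau style conformal change of the induced metric, yields a modified metric $\widetilde g_\Sigma$ on $\Sigma$ with $\Sc_{\widetilde g_\Sigma}\geq 0$; the capillary boundary identity transforms $H_{\partial M}\geq n-1$ into $H^{\widetilde g_\Sigma}_{\partial\Sigma}\geq n-2$. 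Since $\partial\Sigma\subset\partial M$ separates $F^{-1}(U_\pm)$, the restriction $F|_{\partial\Sigma}\colon\partial\Sigma\to\mathbb{S}^{n-2}$ (the equator separating $q_\pm$) is distance non-increasing and of nonzero degree. By the inductive hypothesis applied to $(\Sigma,\widetilde g_\Sigma)$, this piece is isometric to $(\mathbb{D}^{n-1},g_{\mathbb{D}^{n-1}})$ and $F|_{\partial\Sigma}$ is an isometry; rigidity propagates back to give $\widetilde g_\Sigma=g|_\Sigma$, $\Sigma$ totally geodesic, and $\Sc_M\equiv 0$ along $\Sigma$. Sweeping the parameter $h$ through a one-parameter family then produces a foliation of $M$ by such flat totally geodesic leaves, identifying $(M,g)$ with the standard warped-product disk $(\mathbb{D}^n,g_{\mathbb{D}^n})$ and extending $F$ to an isometry of the whole sphere.

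\textbf{Main obstacle.} The crux is tuning $(h,\theta)$ so that (i) the capillary $\mu$-bubble exists and properly separates $F^{-1}(U_-)$ from $F^{-1}(U_+)$, and (ii) the stability inequality together with the capillary boundary term produces \emph{sharp} scalar and mean curvature bounds on $(\Sigma,\widetilde g_\Sigma)$ matching the inductive hypothesis exactly. The choice of $\theta$ must be precisely compatible with the conformal change used inside $\Sigma$; this compatibility is where the dimension restriction $n\leq 4$ enters, since the relevant stability operator and Schoen-Yau rearrangement are sharp only for $\dim\Sigma\leq 3$. Propagating single-leaf rigidity to a full foliation of $M$ while respecting the capillary boundary condition is also technically delicate and requires careful control of the $\mu$-bubble sweep-out up to $\partial M$.
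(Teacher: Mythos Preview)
Your $n=2$ case is fine and matches the paper. The inductive step for $n=3,4$, however, has two genuine gaps that the paper's approach is specifically designed to avoid.

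First, the map you call $F|_{\partial\Sigma}\colon\partial\Sigma\to\mathbb{S}^{n-2}$ is \emph{not} distance non-increasing in general. What one actually has is $F|_{\partial\Sigma}$ landing in $\mathbb{S}^{n-1}$, and to get a map to the equator $\mathbb{S}^{n-2}$ one must compose with the projection $P_{n-2}$, which satisfies $\|\mr dP_{n-2}\|=1/\sin\Psi\geq 1$ (Lemma~\ref{lemma:dP}). So even after a conformal change, the naive inductive hypothesis with the fixed constant $H_{\partial\Sigma}\geq n-2$ does not follow; one must track the \emph{trace norm} $\|\mr dF\|_{\tr}$ through the reduction, and the computation only closes with a strict margin $\delta>0$ (see the estimates leading to line~\eqref{eq:H>}).

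Second, and more fundamentally, at the rigid borderline $(M,g)=(\mathbb D^n,g_{\mathbb D^n})$ the capillary functional $\mathcal A_c$ has \emph{no nontrivial minimizer}, so one cannot reliably produce a bubble $\Sigma$ to which the inductive hypothesis applies, and your proposed foliation by rigid leaves has no starting point. The paper resolves this by separating the argument: the $\mu$-bubble dimension reduction is used only to prove the strict-inequality extremality statement (Proposition~\ref{prop:weak}: if $H_{\partial M}\geq\|\mr dF\|_{\tr}+\delta$ then $\deg F=0$). Rigidity is then obtained by a different mechanism: a Neumann eigenvalue/conformal argument shows that if any of $\Sc_g=0$, $H_{\partial M}=n-1$, $\|\mr dF\|_\tr=n-1$ fails, one can manufacture such a $\delta$-margin and contradict Proposition~\ref{prop:weak}. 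This forces $F$ to be an isometry on $\partial M$, and the conclusion $(M,g)\cong(\mathbb D^n,g_{\mathbb D^n})$ comes from the Shi--Tam inequality, not from a foliation sweep-out.
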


\vspace{1mm}
Recall that the capillary $\mu$-bubble is utilized by Li to prove the dihedral rigidity theorem for compact Riemannian manifolds with nonnegative scalar curvature, nonnegative mean curvature, and (certain) dihedral angle conditions (see \cites{Li_polyhedron_three,Li_n_prisms}); Chai-Wang also uses the capillary $\mu$-bubbles to prove scalar-mean rigidity of certain three-dimensional warped product spaces (see \cite{ChaiWang23}). However, our primary contribution is to develop the technique to study how the positive mean curvature, coupled with a nonzero degree map, inherits sharply under the process of \emph{dimension reduction} and then generalizes the scalar-mean rigidity theorem to higher dimensions without the spin assumption. Our main argument is essentially inspired by Schoen-Yau dimension reduction  for scalar curvature (see \cites{Schoen_Yau_psc_higher, Li_n_prisms,Gromov_Zhu_area}), and it can be viewed as a dimension reduction for mean curvature. 

\vspace{2mm}

As a further application, the scalar-mean curvature rigidity theorem \ref{mainthm: scalar_mean_curvature_comparsion} derives the following extremality results of the spherical radius and the best NNSC filling.

\begin{enumerate}
    \item Recall that the spherical radius of a Riemannian manifold $(N^n, g)$ is defined as
\begin{equation*}
    \radsphere(N,g) = \sup\{r: F: (N, g) \rightarrow (\mathbb{S}^n(r),g_{\sph^{n}(r)}), \ \|\mr dF\| \leq 1 \text{ and } \deg(F) \neq 0\}.
\end{equation*}
\begin{corollary} \label{corollary: spherical_radii}
   If $(M^n, \partial M, g), n=2, 3, 4$ is a smooth closed Riemannian manifold with non-negative scalar curvature $\Sc_g \geq 0$ and uniformly positive mean curvature $H_{\partial M} \geq n-1$, then $$\radsphere(\partial M, g_{\partial M}) \leq 1.$$ Moreover, the equality holds if and only if $(M^n, g)$ is isometric to $(\mathbb{D}^n, g_{\mathbb{D}^n})$.
\end{corollary}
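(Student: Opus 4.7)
The strategy is to reduce the corollary to Theorem \ref{mainthm: scalar_mean_curvature_comparsion} by rescaling the target sphere, together with an Arzel\`a--Ascoli limit to treat the rigidity case.

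For the bound $\radsphere(\partial M, g_{\partial M}) \leq 1$, I would argue by contradiction. Suppose some $r > 1$ is admissible, so there exists a smooth map $F \colon (\partial M, g_{\partial M}) \to (\mathbb{S}^{n-1}(r), g_{\mathbb{S}^{n-1}(r)})$ with $\|\mr dF\| \leq 1$ and $\deg(F) \neq 0$. The canonical radial diffeomorphism $\rho_r \colon \mathbb{S}^{n-1}(r) \to \mathbb{S}^{n-1}$, $x \mapsto x/r$, scales lengths by the factor $1/r$. Hence $\widetilde F := \rho_r \circ F$ is a smooth map from $(\partial M, g_{\partial M})$ to $(\mathbb{S}^{n-1}, g_{\mathbb{S}^{n-1}})$ satisfying $\|\mr d\widetilde F\| \leq 1/r < 1$ and $\deg(\widetilde F) = \deg(F) \neq 0$. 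By Theorem \ref{mainthm: scalar_mean_curvature_comparsion}, $\widetilde F$ must then be a Riemannian isometry, forcing $\|\mr d\widetilde F\| = 1$, which contradicts $\|\mr d\widetilde F\| \leq 1/r < 1$.

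For the rigidity statement, the implication $(M,g) \cong (\mathbb{D}^n, g_{\mathbb{D}^n}) \Rightarrow \radsphere(\partial M, g_{\partial M}) = 1$ is immediate from the identity map. For the converse, assume $\radsphere(\partial M, g_{\partial M}) = 1$. If the supremum is attained by a smooth $F \colon \partial M \to \mathbb{S}^{n-1}$, then Theorem \ref{mainthm: scalar_mean_curvature_comparsion} directly yields $(M,g) \cong (\mathbb{D}^n, g_{\mathbb{D}^n})$. In general, choose smooth maps $F_i \colon (\partial M, g_{\partial M}) \to \mathbb{S}^{n-1}(r_i)$ with $r_i \nearrow 1$, $\|\mr dF_i\| \leq 1$, and $\deg(F_i) \neq 0$. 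Composing with the radial rescalings $\rho_{r_i}$ produces smooth maps $G_i \colon \partial M \to \mathbb{S}^{n-1}$ with $\|\mr dG_i\| \leq 1/r_i \to 1$. By Arzel\`a--Ascoli a subsequence converges uniformly to a $1$-Lipschitz limit $G_\infty \colon \partial M \to \mathbb{S}^{n-1}$, and since the integer degrees $\deg(G_i)$ are invariant under uniform convergence of continuous maps, $\deg(G_\infty) \neq 0$.

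The main obstacle is that $G_\infty$ is only $1$-Lipschitz, whereas Theorem \ref{mainthm: scalar_mean_curvature_comparsion} is formulated for smooth maps. I would close this gap by establishing a Lipschitz extension of Theorem \ref{mainthm: scalar_mean_curvature_comparsion}: the capillary $\mu$-bubble and dimension reduction argument behind Theorem \ref{mainthm: scalar_mean_curvature_comparsion} uses the bound $\|\mr dF\| \leq 1$ in integrated form along the boundary, and so it should extend to Lipschitz comparison maps for which the bound holds almost everywhere. Granting this extension and applying it to $G_\infty$ produces a Riemannian isometry $(\partial M, g_{\partial M}) \to (\mathbb{S}^{n-1}, g_{\mathbb{S}^{n-1}})$ together with the isometry $(M, g) \cong (\mathbb{D}^n, g_{\mathbb{D}^n})$. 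The delicate technical point to verify is the regularity of the capillary $\mu$-bubble foliation in the presence of a merely Lipschitz boundary comparison map; this is the heart of the Lipschitz rigidity step and the main thing left to work out beyond Theorem \ref{mainthm: scalar_mean_curvature_comparsion}.
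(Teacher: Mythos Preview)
Your overall strategy matches the paper's exactly: the inequality $\radsphere(\partial M)\leq 1$ is immediate from Theorem~\ref{mainthm: scalar_mean_curvature_comparsion} via rescaling, and the rigidity case requires passing to a $1$-Lipschitz limit and invoking a Lipschitz strengthening of that theorem, which the paper states and proves as Theorem~\ref{thm:Lipschitz}.

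One point where your sketch diverges from the paper's execution: you propose to run the capillary $\mu$-bubble argument directly with the Lipschitz comparison map $G_\infty$, worrying about regularity of the bubble in the presence of merely Lipschitz boundary data. The paper does \emph{not} do this. Instead (see the proof of Theorem~\ref{thm:Lipschitz2} in Section~\ref{sec: Lip_scalar_mean_rigdity}), it argues by contradiction on a Neumann eigenvalue problem whose boundary coefficient involves $[\mr dF]_\tr$; if an equality in the claimed identities fails, the first eigenvalue is strictly positive. Since this eigenvalue depends stably on the $L^p$-data, one can approximate the Lipschitz $F$ by a smooth $F'$ close in $W^{1,p}$, keep the eigenvalue positive, and then apply the \emph{smooth} extremality Proposition~\ref{prop:weakLipschitz} to $F'$. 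So the $\mu$-bubble analysis is always carried out with a smooth comparison map. A second ingredient you do not mention is the oriented trace function $[\cdot]_\tr$ (Definition~\ref{def:orientedTrace} and Lemma~\ref{lemma:orientedTrace}), which the paper needs to upgrade ``$\|\mr dF\|_\tr=n-1$ a.e.'' to ``$\mr dF$ is orientation-preserving a.e.'', after which results of \cite{cecchini2022lipschitz} and Myers--Steenrod give that $F$ is a smooth isometry.
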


\item Recall that Shi-Wang-Wang-Zhu \cite{Fill-in-SWWZ} prove that: If $(M^n, \partial M, g), 2 \leq n \leq 7$ is a smooth, compact Riemannian manifold with nonnegative scalar curvature  $\Sc_g \geq 0$ in $M$, then there exists a constant $c$ depending only on the intrinsic geometry of the boundary $\partial M$ such that
$$H_{\partial M} \leq c.$$ 
Here, we obtain a sharp constant as follows.
\begin{corollary} \label{corollary: Best_fill_in}
    Suppose that $(N^{n-1}, h), n =2, 3, 4$ is a closed smooth Riemannian manifold of dimension $n=2, 3, 4$. If $(M^n, \partial M, g)$ is a compact, non-negative scalar curvature fill-ins of $(N,h)$, then
    \begin{equation*}
        \inf_{p \in N} H_{\partial M }(p) \leq \frac{n-1}{\radspherek(N)}.
    \end{equation*}
Moreover, the equality holds if and only if $(M^n, g)$ is isometric to $(\mathbb{D}^n, g_{\mathbb{D}^n})$.

Recall that $(M^n, \partial M, g)$ is said to be a nonnegative scalar curvature fill-ins of $(N^{n-1},h)$ if $(M^n, \partial M, g)$ is a compact  manifold such that
\begin{equation*}
     \partial M = N,\ \ \Sc_g \geq 0, \ \
     g_{|N} = h.
\end{equation*}

\end{corollary}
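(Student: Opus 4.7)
The plan is to deduce Corollary~\ref{corollary: Best_fill_in} from Theorem~\ref{mainthm: scalar_mean_curvature_comparsion} via a conformal rescaling argument. Set $H_0 := \inf_{p \in N} H_{\partial M}(p)$ and $R := \radspherek(N)$. To establish the inequality $H_0 \leq (n-1)/R$, I would argue by contradiction. Assume $H_0 > (n-1)/R$. Then by the definition of the spherical radius one can choose a radius $r$ with $(n-1)/H_0 < r < R$ and a smooth $1$-Lipschitz map $F \colon (N,h) \to (\mathbb{S}^{n-1}(r), g_{\mathbb{S}^{n-1}(r)})$ of $\deg(F) \neq 0$.

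The next step is to rescale the metric on $M$. Set $\tilde g := r^{-2} g$; then
\begin{equation*}
\Sc_{\tilde g} = r^2 \Sc_g \geq 0, \qquad \tilde H_{\partial M} = r\, H_{\partial M} \geq r H_0 > n-1.
\end{equation*}
Composing $F$ with the $(1/r)$-Lipschitz radial rescaling $\Phi \colon \mathbb{S}^{n-1}(r) \to \mathbb{S}^{n-1}$ produces a smooth map $\tilde F := \Phi \circ F \colon (\partial M, \tilde g|_{\partial M}) \to (\mathbb{S}^{n-1}, g_{\mathbb{S}^{n-1}})$ of nonzero degree, and chasing the Lipschitz constants through the conformal change shows $\tilde F$ is distance non-increasing. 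Theorem~\ref{mainthm: scalar_mean_curvature_comparsion} then applies to $(M, \tilde g)$ and forces $(M, \tilde g) \cong (\mathbb{D}^n, g_{\mathbb{D}^n})$, so $\tilde H_{\partial M} \equiv n-1$ on $\partial M$. This contradicts the strict inequality $\tilde H_{\partial M} > n-1$ derived above, hence $H_0 \leq (n-1)/R$.

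For the rigidity statement, the reverse implication is immediate, since the unit disk satisfies $\radspherek(\mathbb{S}^{n-1}) = 1$ and $H_{\partial \mathbb{D}^n} \equiv n-1$. For the forward implication, assume $H_0 = (n-1)/R$. I would rescale with $c = 1/R$, pick a smooth $1$-Lipschitz map $F \colon (N,h) \to (\mathbb{S}^{n-1}(R), g_{\mathbb{S}^{n-1}(R)})$ of nonzero degree realizing the supremum in the definition of $R$, and run the same computation as above to obtain a smooth distance non-increasing map $\tilde F \colon (\partial M, R^{-2} g|_{\partial M}) \to \mathbb{S}^{n-1}$ of nonzero degree. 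Theorem~\ref{mainthm: scalar_mean_curvature_comparsion} then yields $(M, R^{-2} g) \cong (\mathbb{D}^n, g_{\mathbb{D}^n})$, so that $(M, g)$ is isometric to the round Euclidean ball of radius $R$.

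The main obstacle I foresee lies in the equality case: the supremum defining $\radspherek(N)$ need not a priori be attained by a smooth map. One only has a sequence of smooth $1$-Lipschitz maps $F_k \colon (N,h) \to \mathbb{S}^{n-1}(r_k)$ with $r_k \nearrow R$, and its Arzel\`a--Ascoli limit $F_\infty$ is only Lipschitz. To close this gap I would invoke the Lipschitz Listing-type scalar-mean rigidity theorem announced in the abstract to apply the rigidity at $r = R$ directly, or alternatively smoothly approximate $F_\infty$ while preserving nonzero degree and the asymptotic Lipschitz bound and then pass to the limit in the conclusion of Theorem~\ref{mainthm: scalar_mean_curvature_comparsion}.
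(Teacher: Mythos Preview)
Your approach is correct and matches the paper's: the inequality is obtained by rescaling and applying Theorem~\ref{mainthm: scalar_mean_curvature_comparsion}, while the rigidity case requires passing to an Arzel\`a--Ascoli Lipschitz limit and invoking the Lipschitz scalar-mean rigidity Theorem~\ref{thm:Lipschitz}, exactly as the paper indicates in its proof outline. Your identification of the regularity obstacle in the equality case and its resolution via Theorem~\ref{thm:Lipschitz} is precisely the point of that theorem, and your observation that the conclusion yields a Euclidean ball of radius $R$ (rather than the unit ball) is in fact the accurate reading of the rigidity statement.
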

\end{enumerate}

Moreover, the capillary $\mu$-bubble technique combined with the dimension reduction argument for mean curvature can also be applied to solve the Listing-type scalar-mean rigidity theorem for $n=2, 3, 4$, which is stronger than Theorem \ref{mainthm: scalar_mean_curvature_comparsion} in the sense of a more flexible mean curvature assumption on the boundary.

\begin{theorem}[Listing-type scalar-mean rigidity theorem] \label{thm: generalized_scalar_mean_comparsion}
  Suppose that $(M^n, \partial M, g)$, $ n =2, 3, 4$ is a smooth compact Riemannian manifold with non-negative scalar curvature $Sc_g \geq 0$ and mean convex boundary $H_{\partial M} > 0$. Let $F: (\partial M, g_{\partial M}) \rightarrow (\mathbb{S}^{n-1}, g_{\sph^{n-1}})$ be a smooth map with $\deg(F) \neq 0$. If $H_{\partial M} \geq \|\mr dF\|(n-1)$, then $F$ is a homothety and $(M^n,  cg)$ is isometric to $(\mathbb{D}^n, g_{\mathbb{D}^n})$ for some $c>0$. 
\end{theorem}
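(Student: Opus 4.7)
The plan is to reduce Theorem \ref{thm: generalized_scalar_mean_comparsion} to the scalar-mean rigidity Theorem \ref{mainthm: scalar_mean_curvature_comparsion} via a conformal change of the metric, in the spirit of Listing's reduction of his theorem from Llarull's. Writing $\tilde g = \phi^{4/(n-2)} g$ for $n = 3, 4$ (and $\tilde g = e^{2u} g$ for $n = 2$), one has the transformation rules
\[
\Sc_{\tilde g}\,\phi^{(n+2)/(n-2)} = -\tfrac{4(n-1)}{n-2}\Delta_g \phi + \Sc_g\,\phi, \qquad
H^{\tilde g}_{\partial M}\,\phi^{n/(n-2)} = H^g_{\partial M}\,\phi + \tfrac{2(n-1)}{n-2}\partial_\nu \phi,
\]
together with $\|\mr dF\|_{\tilde g} = \phi^{-2/(n-2)} \|\mr dF\|_g$ on $\partial M$. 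Imposing $\phi|_{\partial M} = \|\mr dF\|_g^{(n-2)/2}$ (regularized to $(\|\mr dF\|_g^2 + \eps)^{(n-2)/4}$ when $\mr dF$ degenerates, with $\eps \to 0$ at the end) forces $\|\mr dF\|_{\tilde g} \equiv 1$ on $\partial M$, and the hypothesis $H^g_{\partial M} \geq (n-1)\|\mr dF\|_g$ then yields $H^{\tilde g}_{\partial M} \geq n-1$ provided $\partial_\nu \phi \geq 0$ on $\partial M$.

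The key step is to construct a smooth positive $\phi$ on $M$ with the above Dirichlet data satisfying $L_g \phi := -\tfrac{4(n-1)}{n-2}\Delta_g \phi + \Sc_g\,\phi \geq 0$ in $M$ (equivalently $\Sc_{\tilde g} \geq 0$) and the sign condition $\partial_\nu \phi \geq 0$ along $\partial M$. Since $\Sc_g \geq 0$, $L_g$ has positive first Dirichlet eigenvalue, so the linear Dirichlet BVP $L_g \phi = 0$, $\phi|_{\partial M} = \|\mr dF\|_g^{(n-2)/2}$ admits a unique positive solution by the maximum principle. The sign condition on $\partial_\nu \phi$ is the more delicate issue; the natural replacement is to work with a nonlinear Robin condition of Escobar boundary-Yamabe type
\[
L_g \phi = 0 \text{ in } M, \qquad \tfrac{2(n-1)}{n-2}\partial_\nu \phi + H^g_{\partial M}\,\phi = (n-1)\,\phi^{n/(n-2)} \text{ on } \partial M,
\]
which directly produces $\Sc_{\tilde g} \equiv 0$ and $H^{\tilde g}_{\partial M} \equiv n-1$, with the pointwise lower bound $\phi^{2/(n-2)} \geq \|\mr dF\|_g$ (needed so that $\|\mr dF\|_{\tilde g} \leq 1$) following from a comparison/maximum-principle argument against the hypothesis $H^g \geq (n-1)\|\mr dF\|_g$.

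Once $(M, \tilde g, F)$ satisfies the hypotheses of Theorem \ref{mainthm: scalar_mean_curvature_comparsion}, the latter yields that $F : (\partial M, \tilde g|_{\partial M}) \to \sph^{n-1}$ is an isometry and $(M, \tilde g) \cong (\mathbb{D}^n, g_{\mathbb{D}^n})$. Unwinding the equality case: $\Sc_{\tilde g} \equiv 0$ forces $L_g \phi \equiv 0$, so $\Delta_g \phi = \tfrac{n-2}{4(n-1)}\Sc_g\,\phi \geq 0$ and $\phi$ is $g$-subharmonic; and $H^{\tilde g}_{\partial M} \equiv n-1$ combined with $H^g_{\partial M} \geq (n-1)\phi^{2/(n-2)}$ forces $\partial_\nu \phi \equiv 0$ on $\partial M$. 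Integration by parts gives
\[
0 \leq \int_M \phi\,\Delta_g \phi\,dV_g = -\int_M |\nabla \phi|_g^2\,dV_g,
\]
so $\phi \equiv c_0$ is a positive constant. Hence $\tilde g = c_0^{4/(n-2)} g$, $\|\mr dF\|_g \equiv c_0^{2/(n-2)}$ is constant along $\partial M$ (so $F$ is a homothety), and $(M, cg) \cong (\mathbb{D}^n, g_{\mathbb{D}^n})$ for $c = c_0^{4/(n-2)}$.

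The main obstacle is the existence step: producing a conformal factor $\phi$ that both realizes the boundary value $\|\mr dF\|_g^{(n-2)/2}$ (equivalently $\|\mr dF\|_{\tilde g} \leq 1$) and enforces $\partial_\nu \phi \geq 0$ (equivalently $H^{\tilde g}_{\partial M} \geq n-1$); this is precisely the boundary-Yamabe problem of Escobar type and is the technical heart of the argument. A secondary difficulty is handling the degenerate case where $\mr dF$ vanishes on $\partial M$ via the $\eps$-regularization, with a limiting argument showing stability of the rigidity conclusion. The $n = 2$ case is handled in parallel using $\tilde g = e^{2u} g$ with $u|_{\partial M} = \log \|\mr dF\|_g$ and the corresponding Escobar-type boundary equation.
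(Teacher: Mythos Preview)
Your reduction to Theorem \ref{mainthm: scalar_mean_curvature_comparsion} via a conformal change hinges on producing a factor $\phi$ with \emph{both} $\phi^{2/(n-2)}\geq\|\mr dF\|_g$ on $\partial M$ (so that $\|\mr dF\|_{\tilde g}\leq 1$) \emph{and} $H^{\tilde g}_{\partial M}\geq n-1$, and you correctly identify this as the main obstacle. Neither alternative you propose actually resolves it: for the linear Dirichlet problem with $\phi|_{\partial M}=\|\mr dF\|_g^{(n-2)/2}$ there is no control on the sign of $\partial_\nu\phi$; for the Escobar-type Robin problem you obtain $H^{\tilde g}=n-1$ exactly, but the required pointwise bound $\phi^{2/(n-2)}\geq\|\mr dF\|_g$ does not follow from any evident comparison, since $\|\mr dF\|_g$ is an arbitrary nonnegative function on $\partial M$ satisfying no equation to compare against. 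This is a genuine gap, not a technicality, and without it the appeal to Theorem \ref{mainthm: scalar_mean_curvature_comparsion} cannot be made.

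The paper avoids this difficulty by a different ordering of the argument. It first uses the capillary $\mu$-bubble extremality result (Proposition \ref{prop:weak}) together with a Neumann eigenvalue problem directly on $(M,g)$ to establish \textbf{Claim B}: $\Sc_g\equiv 0$ and $H_{\partial M}=\|\mr dF\|_{\tr}=(n-1)\|\mr dF\|$ on $\partial M$. In particular $F$ is already a pointwise homothety and $g_{\partial M}=\|\mr dF\|^{-2}F^*g_{\sph^{n-1}}$. Only then does the paper perform a conformal change, and it uses merely the \emph{harmonic} extension $u$ of $\|\mr dF\|^{-(n-2)/2}$ (no nonlinear Escobar problem is needed). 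This makes $(\partial M,g_u)$ isometric to $\sph^{n-1}$ and $\Sc_{g_u}\equiv 0$; the mean curvature $H_{g_u}$ is not arranged to satisfy any pointwise bound, but a Green-formula computation yields the \emph{integral} inequality $\int_{\partial M}H_{g_u}\,\mr d\mathcal H^{n-1}\geq(n-1)\,\mathcal H^{n-1}(\sph^{n-1})$, which is precisely the Shi--Tam inequality in reverse. Rigidity in Shi--Tam then forces $\nabla u\equiv 0$, hence $u$ is constant. In short: the paper replaces your pointwise conformal prescription (which would require solving Escobar with a comparison estimate) by first extracting the extremality information via $\mu$-bubbles, and then appeals to the integral Shi--Tam inequality rather than to Theorem \ref{mainthm: scalar_mean_curvature_comparsion}.
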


\begin{remark}
    In contrast, the Listing type theorem for a closed Riemannian manifold with scalar curvature lower bound remains open without a spin assumption (see (\ref{item: Listing}) in Theorem \ref{thm: Llarull_Listing} in this paper and \cite{CWXZ_Llarull_4} for the details). This highlights the differences between scalar curvature geometry and mean curvature geometry.
\end{remark}

\vspace{2mm}
Finally, to answer the rigidity theorems of Corollary \ref{corollary: spherical_radii} and Corollary \ref{corollary: Best_fill_in}, we prove the following Lipschitz scalar-mean rigidity theorem, which is a parallel development in geometric analysis in comparison with that in the spin settings (see \cites{Simone24, cecchini2022lipschitz}).

\begin{theorem}\label{thm:Lipschitz}
      Suppose that $(M^n, \partial M, g), n=2, 3, 4$ is a smooth compact Riemannian manifold with non-negative scalar curvature $\Sc_g \geq 0$ and uniformly positive mean curvature $H_{\partial M} \geq n-1$. If $F: (\partial M, g_{\partial M}) \to (\mathbb{S}^{n-1}, g_{\mathbb{S}^{n-1}})$ is a distance non-increasing \textbf{Lipschitz} map of $\deg(F)  \neq 0$, then 
      \begin{enumerate}
          \item    $F$ is a smooth isometry, 
          \item $(M, g)$ is isometric to $(\mathbb{D}^n, g_{\mathbb{D}^n})$.
      \end{enumerate}
   
\end{theorem}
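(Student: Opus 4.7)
The plan is to reduce the Lipschitz statement to the smooth rigidity already established via approximation, and then promote $F$ to a smooth isometry by a degree-volume argument. First I would use the equality case of Corollary \ref{corollary: spherical_radii} to identify $(M,g)$ with the unit disk; afterwards I would analyze the induced $1$-Lipschitz self-map of the round sphere to deduce that it is a smooth isometry.

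For the first step, approximate $F$ by smooth maps $F_\epsilon\colon \partial M \to \mathbb{S}^{n-1}$ via standard mollification (embed $\mathbb{S}^{n-1}\subset\mathbb R^n$, smooth $F$ locally with a partition-of-unity mollifier, then project radially back to the sphere). For small $\epsilon$ this produces smooth $F_\epsilon$ with $F_\epsilon\to F$ uniformly, $\|\mr dF_\epsilon\|\leq 1+\eta(\epsilon)$ where $\eta(\epsilon)\to 0$, and $\deg(F_\epsilon)=\deg(F)\neq 0$ (the last by homotopy invariance of degree under small uniform perturbations). The rescaled maps $G_\epsilon:=F_\epsilon/(1+\eta(\epsilon))$ take values in $\mathbb{S}^{n-1}_{1/(1+\eta(\epsilon))}$, satisfy $\|\mr dG_\epsilon\|\leq 1$, and have the same nonzero degree. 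Thus $\radsphere(\partial M, g_{\partial M})\geq 1/(1+\eta(\epsilon))$, and sending $\epsilon\to 0$ while invoking the upper bound in Corollary \ref{corollary: spherical_radii} yields the equality $\radsphere(\partial M, g_{\partial M})= 1$; the rigidity part of that corollary then gives $(M,g)\cong (\mathbb{D}^n, g_{\mathbb{D}^n})$.

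After identifying $(\partial M, g_{\partial M})$ with the round sphere $\mathbb{S}^{n-1}$, $F$ becomes a $1$-Lipschitz self-map of $\mathbb{S}^{n-1}$ of nonzero degree. By Rademacher's theorem $\mr dF$ exists a.e.\ with $\|\mr dF\|\leq 1$, and the area formula gives
\[
|\deg F|\cdot\vol(\mathbb{S}^{n-1})\leq \int_{\mathbb{S}^{n-1}}|\det \mr dF|\,d\vol\leq \vol(\mathbb{S}^{n-1}),
\]
forcing $|\deg F|=1$ and $|\det \mr dF|=1$ almost everywhere. Combined with $\|\mr dF\|\leq 1$, every singular value of $\mr dF$ equals $1$, i.e., $\mr dF\in O(n-1)$ a.e. A Reshetnyak-type regularity theorem then lifts $F$ to a smooth local isometry of $\mathbb{S}^{n-1}$, and a local isometry of the sphere of degree $\pm 1$ is a global isometry. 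I expect the main obstacle to be this final regularity passage from an almost-everywhere orthogonal differential to a genuinely smooth isometry; the mollification step with controlled Lipschitz constant and preserved degree is classical, and the smooth rigidity is off-the-shelf from Corollary \ref{corollary: spherical_radii}.
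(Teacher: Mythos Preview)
Your argument is circular. In this paper the rigidity clause of Corollary~\ref{corollary: spherical_radii} is \emph{deduced from} Theorem~\ref{thm:Lipschitz}; the paper states explicitly that the extremality parts of Corollaries~\ref{corollary: spherical_radii} and~\ref{corollary: Best_fill_in} follow from Theorem~\ref{mainthm: scalar_mean_curvature_comparsion}, but that the rigidity parts require the Lipschitz theorem because the extremal map is only Lipschitz. So you cannot invoke ``the equality case of Corollary~\ref{corollary: spherical_radii}'' as an input here. Concretely, your mollified maps $F_\epsilon$ satisfy $\|\mr dF_\epsilon\|\leq 1+\eta(\epsilon)$, strictly bigger than $1$, so Theorem~\ref{mainthm: scalar_mean_curvature_comparsion} never applies to any of them; all you obtain from the smooth theory is the inequality $\radsphere(\partial M)\leq 1$ (via Proposition~\ref{prop:weak}), and showing that the supremum equals $1$ gives you nothing further without already knowing the Lipschitz rigidity you are trying to prove. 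Passing to an Arzel\`a--Ascoli limit of the $F_\epsilon$ does not help either: the limit is again only Lipschitz.

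The paper avoids this loop by running the extremality machine directly on the Lipschitz map. It introduces the oriented trace $[\mr dF]_\tr$ (Definition~\ref{def:orientedTrace}), proves an analogue of Proposition~\ref{prop:weak} with $[\,\cdot\,]_\tr$ in place of $\|\cdot\|_\tr$ (Proposition~\ref{prop:weakLipschitz}), and then argues by contradiction on the first Neumann eigenvalue: if Claim~C fails, one approximates $F$ by a smooth $F'$ in $L^p$ so that the eigenvalue stays positive, conformally deforms, and contradicts Proposition~\ref{prop:weakLipschitz}. This yields $\Sc_g=0$ and $H=[\mr dF]_\tr=\|\mr dF\|_\tr=n-1$ on $\partial M$ without ever identifying $\partial M$ with the round sphere in advance. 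Lemma~\ref{lemma:orientedTrace} then forces $\mr dF$ to be an orientation-preserving linear isometry almost everywhere, after which the regularity step you sketched (your ``Reshetnyak-type'' passage) is exactly what the paper does, citing \cite{cecchini2022lipschitz}*{Theorem 2.4} and Myers--Steenrod. Your second step is fine in spirit; the gap is entirely in the first step, where you need an argument that does not presuppose the corollary whose rigidity depends on the theorem at hand.
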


Note that Theorem \ref{thm:Lipschitz} is stronger than Theorem \ref{mainthm: scalar_mean_curvature_comparsion}. We present them separately to aid the reader's understanding of the underlying ideas. Theorem \ref{mainthm: scalar_mean_curvature_comparsion} provides greater geometric insight, whereas Theorem \ref{thm:Lipschitz} is more technical in nature. The latter is primarily motivated by the goal of addressing the rigidity aspect in Corollary \ref{corollary: spherical_radii} and Corollary \ref{corollary: Best_fill_in} in this paper.

\subsection*{Remark on \emph{dimension reduction}} 

It should be noted that the scalar-mean rigidity theorems remain open for smooth, compact, \textbf{nonspin} Riemannian manifolds of dimension greater than four, due to the insufficient regularity of the capillary hypersurfaces near the boundary in higher dimensions. However, our \emph{dimension reduction} argument for the scalar-mean curvature rigidity theorem can still be applied, provided that the regularity of the capillary hypersurfaces has been improved in a generic sense. In contrast, the corresponding (Schoen-Yau) dimension reduction for scalar curvature, as used in \cite{CWXZ_Llarull_4}, cannot be effectively utilized in the proof of Llarull's theorem, where the regularity issue of the $\mu$-bubble for higher dimensions has already been resolved. This is the primary reason why Llarull's theorem has been confirmed only for $n=4$ in \cite{CWXZ_Llarull_4}. Consequently, this distinction underscores the differences between the scalar curvature problem and the mean curvature problem.

\subsection*{Proof outlines} Our primary technique involves the use of capillary $\mu$ bubble and dimension reduction, incorporating mean curvature and scalar curvature properties. Notably, the capillary $\mu$-bubble functional $\mathcal{A}_c$ (see Section \ref{sec: mu-bubble} or Appendix \ref{sec: capillarysurface}) has no nontrivial minimizer in the rigidity model $(\mathbb{D}^n, g_{\mathbb{D}^n})$. This presents a dilemma: perturbing the metric $g$ on $(M,g)$ to ensure the existence of a capillary $\mu$-bubble causes us to lose information about the scalar and mean curvature, thus yielding only the scalar-mean extremal theorem instead of the scalar-mean rigidity theorem. However, finding a smooth capillary $\mu$-bubble is crucial to initiating the dimension reduction argument in our context.

\vspace{1mm}
To overcome this difficulty, we introduce the trace norm $|dF|_{tr}$ of the map $F: (\partial M, g_{\partial M}) \to (\mathbb{S}^{n-1}, g_{\sph^{n-1}})$ in Section \ref{sec: mean_curvature-degree}, and then establish the connection between mean curvature and the degree of the map $F$. Roughly speaking, we demonstrate that a large mean curvature on the boundary $\partial M$, as expressed in terms of the trace norm of the map $F$, forces the degree of $F$ to vanish, thereby leading to the scalar-mean extremality theorem (see Proposition \ref{prop:weak}). The scalar-mean extremality lemma has two key aspects:

\begin{enumerate}
    \item It ensures that we can perturb the map $F$ to a new map (still denoted by $F$) that maps two small open neighborhoods in $\partial M$ to the poles $\{\pm p\}$ of $\mathbb{S}^{n-1}$, respectively. This process does not disrupt the Riemannian structure of $(M^n, \partial M, g)$ and guarantees that the minimizing problem for the $\mu$-bubble functional has no barriers (see Lemma \ref{lemma: existence_mu_bubble}). As a result, recent advances in the regularity of the capillary $\mu$-bubble apply in our context (see \cite{chodosh2024_improvedregularity}*{Theorem 1.1}).

\item  Using the scalar-mean extremality lemma, the dimension reduction technique, and the conformal metric technique that exchanges the scalar curvature with the mean curvature, we first prove the following in Section \ref{sec: proof_main_thm}:
\begin{itemize}
    \item $\textbf{Claim A: }\ \Sc_g = 0\text{ on }M;\  H = n-1=\|\mr dF\|_\tr\text{ on }\partial M,$\\
    under the assumption of Theorem \ref{mainthm: scalar_mean_curvature_comparsion}.

\item  $\textbf{Claim B: }\ \Sc_g = 0\text{ on }M;\ H_{\partial M } = \|\mr dF\|_{\tr} = \|\mr dF\|(n-1)\text{ on }\partial M,$\\
under the assumption of Theorem \ref{thm: generalized_scalar_mean_comparsion}.
\end{itemize}
Then we note that \textbf{Claim A} implies that $F$ is an isometry. Hence, Theorem \ref{mainthm: scalar_mean_curvature_comparsion} follows from the Shi-Tam inequality (see \cite{Shi_Tam}*{Theorem 1} for $n=3$ and \cite{Shi_Tam_extension}*{Theorem 2} for $n =4$ or refer to Appendix \ref{sec: Shi-Tam} for the precise statements in this paper). However, \textbf{Claim B} does not directly lead  to the application of  the Shi-Tam inequality. To address the difficulty, we make a conformal change to $(M^n, \partial M, g)$ with a suitable harmonic function with appropriate Neumann boundary condition in $\partial M$. Following this, we apply the Shi-Tam inequality to conclude that $F$ is an isometry.

\end{enumerate}
\vspace{1mm}

Finally, the extremity parts in Corollaries \ref{corollary: spherical_radii} and \ref{corollary: Best_fill_in} follow directly from Theorem \ref{mainthm: scalar_mean_curvature_comparsion}. However, the map  $F: (\partial M, g) \to (\sph^{n-1}, g_{\sph^{n-1}})$ attaining extremality is only a Lipschitz map, which leads to a lack of regularity in general. As a result, Theorem \ref{mainthm: scalar_mean_curvature_comparsion} cannot be applied directly. To address this issue, we introduce a stronger trace function $[dF]_\tr$ in oriented vector spaces, instead of the trace norm.  Using this, we prove the following in Section \ref{sec: Lip_scalar_mean_rigdity},
\begin{itemize}
    \item $\textbf{Claim C:}\ \ \ \ \  \Sc_g=0\text{ on }M,\  H_{g}=[\mr dF]_\tr=\|\mr dF\|_\tr=n-1\text{ on }\partial M$.
\end{itemize}

 Under the assumption of Theorem \ref{thm:Lipschitz}.
 We further prove that $F$ is an orientation-preserving map almost everywhere (see Lemma \ref{lemma:orientedTrace}). We  then conclude that $F$ is a smooth (Riemannian) isometry by using the results in \cite{cecchini2022lipschitz} and \cite{Myers_Steenrod}

\subsection*{Organization of the article:}
In Section \ref{sec: mu-bubble}, we prove the existence of the capillary $\mu$-bubble with prescribed contact angles modelled on the unit Euclidean $\mathbb{D}^n$. In Section \ref{sec: mean_curvature-degree}, we first introduce a trace norm of the map, and then prove a scalar-mean extremality lemma. In Section \ref{sec: proof_main_thm}, we establish Theorems \ref{mainthm: scalar_mean_curvature_comparsion} and \ref{thm: generalized_scalar_mean_comparsion}. In Section \ref{sec: Lip_scalar_mean_rigdity}, to further address the rigidity results in Corollaries \ref{corollary: spherical_radii} and \ref{corollary: Best_fill_in}, we first introduce a trace function on oriented vector spaces, followed by proving a Lipschitz scalar-mean rigidity theorem. In Appendix \ref{sec: capillarysurface}, we set up the capillary $\mu$-bubble under general conditions, and then calculate the first and second variations of the capillary $\mu$ bubble with full details. In Appendix \ref{sec: MP}, we provide details that the capillary $\mu$-bubble has no barriers that have been used in Section \ref{sec: mu-bubble}. Finally, in Appendix \ref{sec: Shi-Tam}, we briefly review the Shi-Tam inequality and its extension.
\subsection*{Acknowledgement:} The authors would like to express their gratitude to Professors Otis Chodosh, Yuguang Shi, Zhizhang Xie, Guoliang Yu, and Xin Zhou for their insightful discussions and comments. We also wish to thank Dr. Xiaoxiang Chai for bringing his work in this area \cites{ChaiWan24,ChaiWang23} to our attention, and Dr. Yujie Wu for sharing her preprint \cite{Wu_capillarysurfaces} via email.

\section{Preparations on the capillary $\mu$-bubble} \label{sec: mu-bubble}
In this section, we will first set up the minimization problem of the capillary $\mu$-bubble $\mathcal{A}_c$ on a compact manifold with nonempty boundary, and then we will prove an existence lemma of the minimizers of $\mathcal{A}_c$ in our context.

Suppose that $(M^n, \partial M, g)$ is a compact Riemannian manifold with non-empty boundary $S = \partial M$.
Consider a domain $\Omega \subset M$ and denote $\partial \Omega \cap \mathring{M} = Y$, $\bar{Y} \cap \partial M = Z$ (see Figure \ref{setup} in Appendix \ref{sec: capillarysurface} or Figure \ref{sec: setup} in this section for details).
Let $\mu_\partial$ be a smooth function on $\partial M$ with $|\mu_{\partial }| \leq 1$, then we define
\begin{equation} \label{eq: minimal_surface_angle}
    \mathcal{A}_c(\Omega) = \mathcal{H}_g^{n-1} (\partial^* \Omega \cap \mathring{M}) - \int_{\partial^*\Omega \cap S} \mu_\partial \, \mr d\mathcal{H}_g^{n-1}.
\end{equation}
for any $\Omega$ in $\mathcal{C}$, where 
\[\mathcal{C} = \left\{\text{Caccioppoli sets } \Omega \subset M \text{ with certain given  properties} \right\}.\]
\begin{definition}\ 
\begin{enumerate}
    \item A domain $\Omega \subset M$ is said to be {\em $\mc A_c$ stationary} if it is a critical point of $\mathcal{A}_c$ {among the class $\mathcal{C}$}.
    \item A domain $\Omega \subset M$ is said to be {\em an $\mathcal{A}_c$ capillary stable bubble} if $\Omega$ is a minimizer of $\mathcal{A}_c$ among the class $\mathcal{C}$. 
\end{enumerate}
\end{definition}

For the definition of the capillary $\mu$-bubble and related calculations in a general context, please refer to Appendix \ref{sec: capillarysurface}.
To provide motivation for the reader, we present a classical example for the standard unit ball $\mathbb{D} \subset \mathbb{R}^n$, which will serve as a model for the main theorems.

\begin{example}
    Suppose that $\mathbb{D}^n \subset \mathbb{R}^n$ is the standard unit ball with boundary unit sphere $\mathbb{S}^{n-1}$. Consider the spherical coordinates of $\mathbb{S}^{n-1}$ as follows.
    \[\big(\Theta\cdot \sin(\Psi), -\cos(\Psi)), \Psi \in [0, \pi].\]
    Here, $\Theta$ is the coordinate of $\mathbb{S}^{n-2}\subset\R^{n-1}$. Let $L_{\Psi_0}$ be the slice $\Psi=\Psi_0$. A direct calculation shows that the angle between $L_{\Psi_0}$ and the boundary $\mathbb{S}^{n-1}$ is equal to $\Psi_0$. See Figure III in the Appendix \ref{fig:mu-bubble}.
    In this case, if we consider $\mu_\partial = cos(\Psi)$, then any set $\{\Psi\leq\Psi_0\}$, namely the subset of $\mathbb D$ below $L_{\Psi_0}$, is stationary and stable for any $\Psi_0 \in [0, \pi]$.  In this paper, for any point $x \in \mathbb{S}^{n-1}$, we can denote by $\Psi(x)$ the angle between the $\Psi$ slices determined by $x$ and the boundary $\mathbb{S}^{n-1}$.
\end{example}

Suppose that $\{\pm p\}$ are the north and south poles of $\sph^{n-1}$. 
Considering $\Psi$ as a smooth function on $\sph^{n-1}\setminus\{{\pm p}\}$, the metric $g_{\sph^{n-1}}$ is indeed a warped product metric
$$g_{\sph^{n-1}}=\mr d\Psi^2+\big(\sin(\Psi)\big)^2g_{\sph^{n-2}}.$$
It directly deduces the following metric property of the projection map.
\begin{lemma}\label{lemma:dP}
Suppose that $\{\pm p\}$ are the north and south poles of $\mb S^{n-1}$. If $P_{n-1}: \mathbb{S}^{n-1}-\{\pm p\} \rightarrow \sph^{n-2}$ is the projection map defined by
    \[P_{n-1}: (\Theta\cdot \sin(\Psi), -\cos(\Psi) ) \mapsto \Theta,\]
    then $$\|\mr dP_{n-1}\|=\frac{1}{\sin(\Psi)}$$
    for any point in $\mathbb{S}^{n-1}$.
\end{lemma}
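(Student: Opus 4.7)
The plan is a direct computation from the warped product form of the round metric on $\mathbb{S}^{n-1}$ highlighted immediately above the lemma. In the spherical coordinates $(\Theta,\Psi)$ on $\mathbb{S}^{n-1}\setminus\{\pm p\}$, the tangent space at a point decomposes orthogonally as $\mathbb{R}\,\partial_\Psi \oplus T_\Theta\mathbb{S}^{n-2}$, and the identity
$$g_{\mathbb{S}^{n-1}} = d\Psi^2 + \sin^2(\Psi)\, g_{\mathbb{S}^{n-2}}$$
says that $\partial_\Psi$ has unit length, is orthogonal to the $\Psi$-slice, and that a vector $v$ tangent to the $\Psi$-slice has length $\sin(\Psi)\,|v|_{\mathbb{S}^{n-2}}$, where $|\cdot|_{\mathbb{S}^{n-2}}$ denotes the length in $T_\Theta\mathbb{S}^{n-2}$.

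Next I would read off $\mr d P_{n-1}$ on each summand of this orthogonal decomposition. Since $P_{n-1}$ depends only on $\Theta$, it is constant along the $\partial_\Psi$-curves (those fixing $\Theta$), so $\mr d P_{n-1}(\partial_\Psi)=0$. For $v\in T_\Theta\mathbb{S}^{n-2}$ regarded as a slice-tangent vector at the source, the map $P_{n-1}$ acts by the identity on the $\Theta$ factor, hence $\mr d P_{n-1}(v)=v$ in $T_\Theta\mathbb{S}^{n-2}$.

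Combining these two pieces, for any nonzero slice-tangent $v$ the ratio of target length to source length equals $|v|_{\mathbb{S}^{n-2}}/(\sin(\Psi)|v|_{\mathbb{S}^{n-2}})=1/\sin(\Psi)$, while the $\partial_\Psi$ direction contributes $0$. Since the two summands are orthogonal in the source, the operator norm $\|\mr d P_{n-1}\|$ at a point of angle $\Psi$ is attained on unit slice-tangent vectors and equals $1/\sin(\Psi)$, as claimed. There is no real obstacle here: the lemma is a direct repackaging of the warped product identity, and I would write the argument in at most a few lines.
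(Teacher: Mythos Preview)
Your proposal is correct and matches the paper's approach: the paper does not give a standalone proof but simply notes that the warped product form $g_{\sph^{n-1}}=\mr d\Psi^2+\sin^2(\Psi)\,g_{\sph^{n-2}}$ ``directly deduces'' the claim, which is exactly the computation you outline.
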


The minimization problem of $\mc A_c$ may have a trivial solution, i.e. the minimizer is an empty set. In the following, we now consider the following constrained minimization problem that always has a non-empty solution.

\begin{lemma}\label{lemma: existence_mu_bubble}
    With the notations above. Suppose that $(M^n, \partial M, g)$ is a smooth compact Riemannian manifold with nonempty boundary $\partial M$. If

    \begin{itemize}
        \item $S:=\partial M$ has positive mean curvature $H_{S} >0$;
        \item $F: \partial M \rightarrow \mathbb{S}^{n-1}$ is a smooth map with $\deg(F) \neq 0$, and $F$ maps a small, smooth geodesic ball $B_1 \subset S $(resp. $B_2 \subset S)$ to a very small neighborhood of the south pole $-p \in \mathbb{S}^{n-1}$ (resp. north pole $+p$) of $\mathbb{S}^{n-1}$;

        \item In line (\ref{eq: minimal_surface_angle}), we set $\mu_{\partial}(s) = (\cos(\Psi(F(s)))$ for any $s \in S$;

        \item $n = 2, 3, 4$,  
    \end{itemize}
    then there exists a smooth, stable capillary $\mu$-bubble $\Omega $ in $M$ for which the boundary $Y := \partial \Omega \cap \mathring{M} $ satisfies the following properties:
    \begin{enumerate}
        \item \label{item: first_variation} First variation: $H_{Y} = 0$ on $Y$ and $J(z) = \Psi(F(z))$ for any $z \in  Z = \partial Y$ where $J(z)$ is the contact angle between $Y$ and $S$ at the intersection point $z \in Z= \partial Y$;
        \item \label{item: second_variation}{Stability: for any $\varphi\in C^\infty(Y)$,
        \begin{align*}    
        \mc Q(\varphi,\varphi) :=&\int_Y |\nabla \varphi|^2  - \big(\Ric_g(\nu_Y, \nu_Y) + \|A_Y\|^2  \big) \varphi^2 \,\mr d\mathcal{H}_g^{n-1}  \\
       + & \int_Z \big(  H_Z - \frac{H_S}{\sin(J)}  +\frac{1}{\sin(J)} \langle \nabla \Psi, \mr dF(\n) \rangle \big)\varphi^2  \,\mr d\mathcal{H}_g^{n-2} \geq 0, 
   \end{align*}
where $\n$ is the unit, upward normal vector field of $Z$ in $S$; $\nu_Y$ is the outward unit normal of $Y$. Here, we will write $\nabla J = \nabla \Psi |_{F(Z)}$ for notation abuse whenever it is no confusion. }
   \item  \label{item: nonzero_degree}Preserve non-zero degree: there exists a connected component of $Y$ still denoted by $Y$, and a smooth map $$F_{n-2}: \partial Y \rightarrow \mathbb{S}^{n-2}$$ 
   with $\deg(F_{n-2}) \neq 0$. 
    \end{enumerate}
    \end{lemma}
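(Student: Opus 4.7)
My plan is to obtain the capillary minimizer by direct methods and read off (\ref{item: first_variation}) and (\ref{item: second_variation}) from the variation formulas of Appendix~\ref{sec: capillarysurface}, with (\ref{item: nonzero_degree}) requiring a separate Brouwer-degree argument. First, by composing $F$ with a self-map of $\mathbb{S}^{n-1}$ if necessary (which preserves $\deg F$ and the property that $F(B_i)$ lies in a small cap around the opposite pole), I may assume $F^{-1}(-p)\subset B_1$ and $F^{-1}(+p)\subset B_2$. Then I would minimize $\mathcal A_c$ over the admissible class
\[
\mathcal C \;=\; \bigl\{\Omega \subset M \text{ Caccioppoli} : B_2 \subset \Omega,\; B_1 \cap \Omega = \emptyset\bigr\},
\]
which is nonempty and aligned with the signs of $\mu_\partial=\cos(\Psi\circ F)$ near the obstacles. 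Since $|\mu_\partial|\leq 1$, $\mathcal A_c$ is bounded below and $L^1$-lower semicontinuous on Caccioppoli sets, so $BV$-compactness produces a minimizer $\Omega\in\mathcal C$. For $n\in\{2,3,4\}$, interior regularity of $Y=\partial^*\Omega\cap \mathring M$ is classical, and regularity up to the free boundary $Z=\partial Y$ is supplied by the recent capillary boundary regularity theorem \cite{chodosh2024_improvedregularity}. The maximum principle argument of Appendix~\ref{sec: MP}, powered by $H_{\partial M}>0$ and $|\mu_\partial|<1$ on $\partial M\setminus(B_1\cup B_2)$, keeps $Z$ at positive distance from $B_1, B_2$, so the constraints in $\mathcal C$ are inactive and $\Omega$ is genuinely a critical point.

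Given this smooth minimizer, assertions (\ref{item: first_variation}) and (\ref{item: second_variation}) follow directly from the first- and second-variation formulas for $\mathcal A_c$ derived in Appendix~\ref{sec: capillarysurface}. Stationarity forces $H_Y=0$ on $Y$ and the capillary relation $\cos J=\mu_\partial=\cos(\Psi\circ F)$ on $Z$; since $\cos$ is injective on $[0,\pi]$ and both $J$ and $\Psi\circ F$ take values there, this yields $J=\Psi\circ F$. For stability, the only ingredient beyond the standard capillary second variation is the conormal derivative of $\mu_\partial$ along $Z$, and the chain rule applied to $\mu_\partial=\cos(\Psi\circ F)$ converts $-\nabla_S\mu_\partial\cdot\n$ into $\sin(\Psi)\,\langle\nabla\Psi,\mathrm{d}F(\n)\rangle$; dividing by $\sin J$ then produces the last boundary term in the stated $\mathcal Q(\varphi,\varphi)\geq 0$.

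The substantive content is (\ref{item: nonzero_degree}). Setting $V=\partial M\setminus\Omega$, the barrier argument together with the choice of $F$ ensures $F(Z)\subset \mathbb{S}^{n-1}\setminus\{\pm p\}$, so $F_{n-2}:=P_{n-1}\circ F|_Z\colon Z\to\mathbb{S}^{n-2}$ is well defined. I would stereographically project $\mathbb{S}^{n-1}\setminus\{+p\}$ onto $\mathbb R^{n-1}$ sending $-p$ to the origin; under this identification $P_{n-1}$ becomes the radial projection away from the origin, and $F$ restricts on $V$ to a continuous map $\tilde F\colon V\to\mathbb R^{n-1}$ with $\tilde F(\partial V)\subset \mathbb R^{n-1}\setminus\{0\}$. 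Since $F^{-1}(-p)\subset B_1\subset V$, classical Brouwer degree theory then gives
\[
\deg\bigl(F_{n-2}\colon Z\to \mathbb{S}^{n-2}\bigr)\;=\;\#_{\mathrm{alg}}\bigl(\tilde F^{-1}(0)\cap V\bigr)\;=\;\deg(F)\;\neq\;0,
\]
with $Z$ oriented as $\partial V$. As the total degree splits as a sum over the components $Y_0$ of $Y$, each contributing $\deg(F_{n-2}|_{\partial Y_0})$, at least one component $Y_0$ has nonzero boundary degree, proving (\ref{item: nonzero_degree}).

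The main obstacle I anticipate is the up-to-boundary regularity of the capillary minimizer for $n=4$: interior regularity is classical, but smoothness near $Z$ in dimension four relies on the delicate recent advance \cite{chodosh2024_improvedregularity}. A secondary but essential issue is confirming via the maximum principle of Appendix~\ref{sec: MP} that $Z$ stays strictly inside $\{|\mu_\partial|<1\}$, since otherwise the contact angle would degenerate and the variational formulas would break down. Once both regularity issues are settled, the degree step reduces to the Brouwer degree and presents no further difficulty.
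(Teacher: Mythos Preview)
Your overall strategy matches the paper's almost exactly: minimize $\mathcal A_c$ over a constrained class of Caccioppoli sets, invoke compactness and the capillary regularity of \cite{chodosh2024_improvedregularity}, use the barrier/maximum-principle argument of Appendix~\ref{sec: MP} to show the constraints are inactive, and read off (\ref{item: first_variation})--(\ref{item: second_variation}) from Appendix~\ref{sec: capillarysurface}. Your self-contained Brouwer-degree argument for (\ref{item: nonzero_degree}) is a clean substitute for the paper's citation of \cite{CWXZ_Llarull_4}*{Lemma 3.2}.

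There is, however, a genuine sign error in your admissible class that breaks the barrier step. Near $B_1$ one has $\Psi\circ F\approx 0$ so $\mu_\partial\approx +1$, and near $B_2$ one has $\mu_\partial\approx -1$. Since $\mathcal A_c(\Omega)=\mathcal H^{n-1}(Y)-\int_{\partial^*\Omega\cap S}\mu_\partial$, the energy \emph{decreases} when the wetted region $\partial^*\Omega\cap S$ contains $B_1$ and avoids $B_2$; thus the natural constraint is $B_1\subset\Omega$, $B_2\cap\Omega=\emptyset$ (this is the paper's choice). With your reversed class $\{B_2\subset\Omega,\ B_1\cap\Omega=\emptyset\}$, a minimizing sequence wants to absorb $B_1$ and expel $B_2$, so the obstacles become \emph{active} rather than inactive, $Z$ is driven to $\partial B_1\cup\partial B_2$, and the free-variation conclusions (\ref{item: first_variation})--(\ref{item: second_variation}) no longer follow. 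Swapping the roles of $B_1$ and $B_2$ (and correspondingly taking $V=\Omega\cap S$ in the degree step) repairs everything.

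Two smaller points. First, composing $F$ with a self-map of $\sph^{n-1}$ to force $F^{-1}(\pm p)\subset B_i$ changes $\mu_\partial$ and hence the functional, so you would be proving the lemma for a different $F$; in the paper this concern is instead absorbed into the construction of Lemma~\ref{lemma: perturbation}, where one arranges $F(B_i)=\{\pm p\}$ at the outset. Second, you should explicitly rule out the possibility that the limit hypersurface $Y$ touches $S$ tangentially away from $Z$; the paper handles this with the strong maximum principle (citing \cite{Li_Zhou_MP}) using $H_S>0$ against $H_Y=0$, and it is not subsumed by interior or free-boundary regularity.
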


\begin{proof}
    We mainly focus on the proof of the existence of a stable capillary $\mu$-bubble. The variation formulas in item (\ref{item: first_variation}) and the stability in (\ref{item: second_variation}) follow from the calculations in the Appendix \ref{sec: capillarysurface} and Lemma \ref{lem:vector extension}; the argument of non-zero degree of the map $F_{n-2}$ follows from \cite{CWXZ_Llarull_4}*{Lemma 3.2}.
    
    Now we set \[\mathcal{C} = \big\{\text{Caccioppoli sets } \Omega \subset M \text{ such that } \partial^*\big(\partial^* \Omega \cap \mathring{ M}\big) \subset \partial M \setminus\big( B_1 \cup B_2 \big) \text{ and } B_1 \subset  \Omega 
 \big\}.\]
 
 	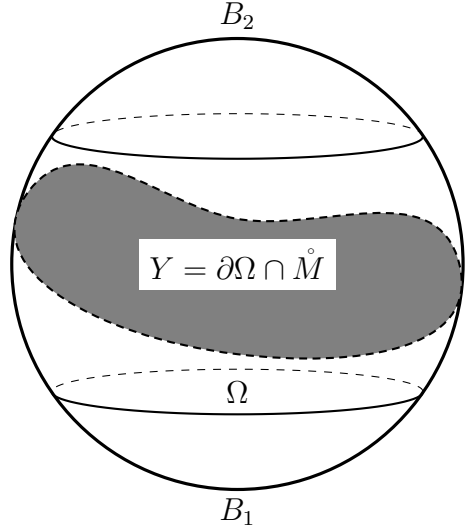
\begin{figure}[ht]
		\begin{tikzpicture}[use Hobby shortcut,scale=1]
			\draw[very thick] (3,0) arc(0:360:3);
			\draw[dashed,domain=0:180,smooth] plot({2.47*cos(\x)},{0.3*sin(\x)+1.7});
			\draw[thick,domain=180:360,smooth] plot({2.47*cos(\x)},{0.3*sin(\x)+1.7});
			\draw[dashed,domain=0:180,smooth] plot({2.47*cos(\x)},{0.3*sin(\x)-1.7});
			\draw[thick,domain=180:360,smooth] plot({2.47*cos(\x)},{0.3*sin(\x)-1.7});
        \node at (0,-1.7){$\Omega$};
			\node at (0,3.3){$B_2$};
			\node at (0,-3.3){$B_1$};
        \fill[gray,opacity=0.1,closed] (-2.75,1) .. (0,.6) .. (2.95,0) .. (0,-1.2);
        \node at (0,0)[fill=white]{$Y = \partial \Omega \cap \mathring{M}$};
			\draw[dash between=0 and 0.45,
     dash between style=dashed,closed,thick] (-2.75,1) .. (0,.6) .. (2.95,0) .. (0,-1.2);
		\end{tikzpicture}
  \caption{ $\mu$ -bubble setup} \label{sec: setup}
	\end{figure} 
Since $(M, \partial M, g)$ is a smooth, compact Riemannian manifold, we obtain 
        \begin{equation*}
           I :=  \inf_{\Omega \in \mathcal{C}}  \mathcal{A}_c(\Omega) \text{ exists}.
        \end{equation*}
Next we assume that $\{\Omega_k\}_{k=1}^\infty \subset M$ is a minimizing sequence of $\mathcal{A}_c$ such that
\[\lim_{k \rightarrow \infty} \mathcal{A}_c(\Omega_k) = I.\]
Consequently, by the definition of the  minimizing sequence of $\{\Omega_k\}_{k=1}^\infty$, we obtain that
\[\mathcal{H}_g^{n-1}(\partial^*\Omega_k) \leq I + 1+ \mathcal{H}_g^{n-1}(\partial M)\]
for large $k$.
Note that the minimization problem in the context has obstacles in the following two aspects: 

\begin{enumerate}
    \item \label{item: attach_S} The interior of $\partial \Omega_k \cap \mathring{M}$ may attach the boundary $S$,

    \item \label{item: attach_corner}  $\partial Y_k$ may move closer and closer to the set $\partial B_1$ or $\partial B_2$ as $k \to \infty$. Here, $Y_k =  \partial \Omega_k \cap \mathring{M} $.
\end{enumerate}

Note that since $H_S > 0$ on the boundary $S$, the case (\ref{item: attach_S}) will be ruled out by the strong maximum principle in the interior (see \cite{Li_Zhou_MP}*{Theorem 1.2}). Moreover,  case (\ref{item: attach_corner}) is prevented from the maximum principle on the boundary (see \cite{Wu_capillarysurfaces}*{Step 4 \& 5 in the proof of Theorem 1.3 on pages 5-6})\footnote{The argument {applies to} all dimensions in \cite{Wu_capillarysurfaces}.}. For the readers' convenience, we will provide details as Claim \ref{claim: mp} in Appendix \ref{sec: MP}.

 Hence, the minimization problem of $\mathcal{A}_c$ does not have a barrier. Therefore, by a recent regularity theorem on the capillary $\mu$-bubble in \cite{chodosh2024_improvedregularity}*{Theorem 1.1} for $n\leq 4$, we conclude that $Y_k: = \partial \Omega_k$ converges to a smooth hypersurface $Y \subset M$ such that
\begin{itemize}
    \item $\displaystyle \mathring{Y} \subset \mathring{M}$;
    \item $Z:= Y \cap S$ is a smooth nonempty hypersurface in $S$.
\end{itemize}
Hence, we finished the proof and note that we only used the dimension assumption on the regularity.
\end{proof}

\section{Scalar-mean extremality} 
\label{sec: mean_curvature-degree}

In this section, we will first prove the scalar-mean extremality theorem that is a weaker version of Theorem \ref{thm: generalized_scalar_mean_comparsion}. 

\vspace{1mm}
Suppose that $(M^n, \partial M, g)$ is a smooth, compact Riemannian manifold with nonempty boundary $\partial M$ and $F: (\partial M, g) \to (\mathbb{S}^{n-1}, g_{\sph^{n-1}})$ is a smooth map. Recall that the trace norm of $\mr dF$ at any point $x$ in $\partial M$ is defined by
\begin{equation}\label{def: trace_norm}
    \|\mr dF\|_{\tr}(x)\coloneqq \sup_{\{u_i\},\{v_i\}}\sum_{i=1}^{n-1}|\langle \mr dF_x(u_i),v_i\rangle|.
\end{equation}
Here, the supremum is taken over both all orthonormal bases $\{u_i\}_{1\leq i\leq n}$ of $T_x \partial M$ and the orthonormal basis $\{v_i\}_{1\leq i\leq n}$ of $T_{F(x)}\sph^{n}$. We may also write $\|\mr dF\|_{\tr,g}$ to emphasize that the trace norm is taken with respect to the Riemannian metric $g$ on $\partial M$.

\begin{lemma} \label{lemma: perturbation}
    Suppose that $(M^n,g)$ is a smooth closed Riemannian manifold. If 
    $$ F \colon (M,g)\to(\sph^{n},g_{\sph^{n}})$$ 
    is a smooth map such that \begin{equation}\label{eq:delta}
		\|\mr d F\|_{\tr} < A
	\end{equation}
 for some smooth function $A$ on $M$, then there exists a smooth map $$F': (M,g)\to(\sph^{n},g_{\sph^{n}})$$ with the following properties.
\begin{itemize}
    \item There exit small open balls $B_1, B_2$ in $\partial M$ such that $$F'(B_1) = \{-p\}, F'(B_2) = \{+p\}$$ where $\{\pm p\}$ are the north and south poles of $\mathbb{S}^n$, and
    \item $$\|\mr d F\|_{\tr} < A, \ \deg(F') =\deg(F).$$
\end{itemize}
\end{lemma}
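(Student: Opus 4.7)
The plan is to construct $F'$ by a smooth local modification of $F$ supported in two disjoint small balls $B_1'\supset B_1$ and $B_2'\supset B_2$ centered at chosen points $q_1,q_2\in M$, leaving $F$ unchanged outside the $B_i'$ and arranging $F'\equiv \mp p$ on $B_i$. Since the modification can be realized as a smooth homotopy from $F$ to $F'$ (for instance by replacing the cutoff $\beta_i$ below with $\beta_i^t:=(1-t)+t\beta_i$), the degree is preserved. The strict inequality $\|\mr dF\|_{\tr}<A$ on the compact $M$ supplies a uniform slack $\eta:=\min_M(A-\|\mr dF\|_{\tr})>0$, which will absorb the extra derivative introduced by the cutoff.

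After a preliminary degree-preserving perturbation of $F$ that consumes at most half of $\eta$, one may assume $F(q_1)=-p$ and $F(q_2)=+p$: when $\deg F\ne 0$ these preimages already exist by surjectivity, and when $\deg F=0$ they are produced by a further small local perturbation that brings $\pm p$ into the image of $F$. Choose nested smooth geodesic balls $B_i=B(q_i,r_1)\subset B_i'=B(q_i,r_0)$, disjoint, with $r_0$ so small that $F(B_1')$ avoids $+p$ and $F(B_2')$ avoids $-p$; then $u_i:=\exp_{\mp p}^{-1}\circ F$ is smooth on $B_i'$ with $|u_i|\to 0$ as $r_0\to 0$. Let $\beta_i:M\to[0,1]$ be smooth radial cutoffs with $\beta_i\equiv 0$ on $B_i$ and $\beta_i\equiv 1$ outside $B_i'$, and define
\[
F'(x):=\exp_{\mp p}\bigl(\beta_i(x)\,u_i(x)\bigr)\quad\text{on }B_i',\qquad F'(x):=F(x)\text{ elsewhere}.
\]
Then $F'\equiv \mp p$ on $B_i$, $F'=F$ on $\partial B_i'$, and $F'$ is smooth with the same degree as $F$.

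The trace-norm bound follows from the chain rule and the fact that $\exp_{\mp p}$ on $\sph^n$ is $1$-Lipschitz: on $B_i'$, $\mr dF'=\mr d\exp_{\mp p}|_{\beta_i u_i}\circ(\beta_i\,\mr du_i+\mr d\beta_i\otimes u_i)$, hence $\|\mr dF'\|_{\tr,g}\leq \beta_i\,\|\mr du_i\|_{\tr,g}+|\mr d\beta_i|\cdot|u_i|$. As $r_0\to 0$, the distortion of $\exp_{\mp p}^{-1}$ tends to the identity near the origin, so $\|\mr du_i\|_{\tr,g}\to \|\mr dF\|_{\tr,g}$, while the rank-one correction $|\mr d\beta_i|\cdot|u_i|\leq C\,\diam(F(B_i'))/(r_0-r_1)$ tends to $0$ (by the preliminary reduction, $|u_i|\leq d(F(x),\mp p)$ is uniformly small on a small $B_i'$). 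Choosing $r_0-r_1$ and $r_0$ appropriately makes the correction smaller than $\eta/2$, so $\|\mr dF'\|_{\tr,g}<A$ on all of $M$. The main obstacle is the preliminary reduction: without first arranging $F(q_i)$ close to $\mp p$, $|u_i|$ could be as large as $\pi$ and the construction would not close; this is why the case $\deg F=0$ requires a separate preparatory local perturbation to move $\pm p$ into the image of $F$ before the cutoff interpolation is performed.
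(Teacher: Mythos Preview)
Your approach differs from the paper's, which avoids any local cutoff estimate. There, compactness gives $\|\mr dF\|_{\tr}<(1-\delta)A$ for some $\delta>0$; one then rescales the target to the larger sphere $\sph^n\bigl(\tfrac{1}{1-\delta}\bigr)$, obtaining $F_\delta$ with $\|\mr dF_\delta\|_{\tr}<A$, and post-composes with a smooth cap-collapsing map $\pi\colon\sph^n\bigl(\tfrac{1}{1-\delta}\bigr)\to\sph^n$ satisfying $\|\mr d\pi\|\leq 1$ and $\deg\pi=1$. The composite $F'=\pi\circ F_\delta$ then satisfies $\|\mr dF'\|_{\tr}\leq\|\mr dF_\delta\|_{\tr}<A$, and the preimages of the collapsed caps contain the balls $B_1,B_2$ (at least when $\deg F\neq 0$, the only case needed downstream).

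Your local construction has a gap in the rank-one estimate. From $|\mr d\beta_i|\leq C/(r_0-r_1)$ and $|u_i|\leq\diam(F(B_i'))\leq 2Lr_0$ with $L=\sup_{B_i'}\|\mr dF\|$, you obtain $|\mr d\beta_i|\cdot|u_i|\leq 2CL\,r_0/(r_0-r_1)$. Since $r_1>0$ forces $r_0/(r_0-r_1)>1$, this bound never drops below $2CL$ and does \emph{not} tend to $0$ as you claim; when the slack $\eta$ is small the inequality $\|\mr dF'\|_{\tr}<A$ need not hold. The remedy is to use the pointwise bound $|u_i(x)|\leq L\,d(x,q_i)$ together with a logarithmic-type radial cutoff, say $\beta_i(r)=\log(r/r_1)/\log(r_0/r_1)$, for which $r\,\beta_i'(r)=1/\log(r_0/r_1)$ and hence $|\mr d\beta_i|\cdot|u_i|\leq L/\log(r_0/r_1)\to 0$ as $r_1/r_0\to 0$. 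With this modification your argument closes, but the paper's post-composition route is both shorter and sidesteps the issue entirely.
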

\begin{proof}
    Since  $\|\mr dF\|_{\tr} < A$ by our assumption and $(M, g)$ is a smooth, closed Riemannian manifold, there exists a positive constant $\delta > 0$ such that 
    $$\|\mr dF\|_{\tr} < (1-\delta)A.$$
    Consequently, by scaling, there exists a smooth map
    $$F_\delta : (M, g) \to \bigg(\mathbb{S}^n(\frac{1}{1-\delta}),\  g_{\mathbb{S}^n(\frac{1}{1-\delta})}\bigg)$$
    such that 
   $$\|\mr d F_{\delta}\|_{\tr} < A,\ 
    \deg(F_{\delta}) =\deg(F).$$

Next it is straightforward to construct a map 
$$\pi: \bigg(\mathbb{S}^n(\frac{1}{1-\delta}), g_{\mathbb{S}^n(\frac{1}{1-\delta})}\bigg) \to (\sph^{n},g_{\sph^{n}})$$ by collapsing the small south and north spherical caps of $ \displaystyle \big(\mathbb{S}^n(\frac{1}{1-\delta}), g_{\mathbb{S}^n(\frac{1}{1-\delta})}\big)$ to the south pole $-p$ and north pole $+p$ of $(\sph^{n},g_{\sph^{n}})$ with
  \begin{itemize}
    \item $\|\mr d \pi\| \leq 1$, where $\|\mr d \pi\|$ stands for the $l^\infty$-matrix norm of $d\pi$, and
    \item $\deg(\pi) \neq 0$.
\end{itemize}
Consequently, $F' := \pi \circ F_\delta$ is the map as required.
\end{proof}

Using the Existence Lemma \ref{lemma: existence_mu_bubble} for the capillary $\mu$-bubble from Section \ref{sec: mu-bubble} and the Perturbation Lemma \ref{lemma: perturbation} from Section \ref{sec: mean_curvature-degree}, we can establish the following extremality theorem.

\vspace{1mm}
\begin{proposition}\label{prop:weak}
Suppose that $(M^n, \partial M, g)$ is a smooth compact Riemannian manifold with  non-empty mean convex boundary $\partial M$ and non-negative scalar curvature $\Sc_g \geq 0$ in $M$. If $ F \colon (\partial M,g|_{\partial M})\to(\sph^{n-1},g_{\sph^{n-1}})$ is a smooth map such that \begin{equation}\label{eq:delta_2}
		H_{\partial M}\geq \|\mr dF\|_{\tr}+\delta \ \text{ on } \partial M
	\end{equation}
for some positive constant $\delta > 0$ and $n=2, 3, 4$, then $\deg(F)=0$.
\end{proposition}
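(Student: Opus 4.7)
I will argue by contradiction, assuming $\deg(F)\ne 0$, and proceed by induction on $n\in\{2,3,4\}$. The base case $n=2$ is handled by a direct Gauss--Bonnet argument, while the inductive step $n\to n-1$ is carried out by dimension reduction via the capillary $\mu$-bubble constructed in Section \ref{sec: mu-bubble}, combined with a conformal deformation of the induced metric on the bubble so that the strict scalar--mean gap $\delta>0$ survives in the lower dimension.

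\textbf{Base case $n=2$.} Here $\partial M$ is a disjoint union of circles $\gamma_i$ and both source and target are $1$-dimensional, so $\|\mr dF\|_{\tr}=|F'|$. Computing degree component-by-component,
\[
|\deg F|\le\sum_i|\deg(F|_{\gamma_i})|\le\frac{1}{2\pi}\int_{\partial M}\|\mr dF\|_{\tr}\le \frac{1}{2\pi}\int_{\partial M}(H_{\partial M}-\delta)<\frac{1}{2\pi}\int_{\partial M}H_{\partial M}.
\]
Gauss--Bonnet $\int_MK+\int_{\partial M}H_{\partial M}=2\pi\chi(M)$, combined with $K=\tfrac12\Sc_g\ge 0$, yields $\int_{\partial M}H_{\partial M}\le 2\pi\chi(M)$, while $H_{\partial M}>0$ forces $\chi(M)>0$. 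Since $\chi\le 1$ for any surface with non-empty boundary, $\chi(M)=1$ and $|\deg F|<1$, so $\deg F=0$.

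\textbf{Inductive step.} Assume the proposition in dimension $n-1$. Using Lemma \ref{lemma: perturbation} with $A=H_{\partial M}$, replace $F$ by a map $F'$ of the same nonzero degree that still satisfies $H_{\partial M}\ge\|\mr dF'\|_{\tr}+\delta/2$ and collapses small geodesic balls $B_1,B_2\subset\partial M$ onto $\{\pm p\}$. Invoke Lemma \ref{lemma: existence_mu_bubble} with $\mu_\partial=\cos(\Psi\circ F')$ to obtain a smooth stable capillary $\mu$-bubble $\Omega$; its boundary $Y=\partial\Omega\cap\mathring M$ is minimal, meets $\partial M$ along $Z=\partial Y$ at contact angle $J=\Psi\circ F'$, and by item~(3) supports a map $F_{n-2}:=P_{n-1}\circ F'|_Z\colon Z\to\sph^{n-2}$ of nonzero degree. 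The plan is now to verify the hypotheses of the proposition on a suitable $(Y,Z,\tilde g_Y)$ in dimension $n-1$ with the map $F_{n-2}$ and some positive gap $\tilde\delta$. For the bulk, I use the Gauss equation together with $H_Y=0$ to rewrite $\Ric_g(\nu_Y,\nu_Y)+\|A_Y\|^2=\tfrac12(\Sc_g+\|A_Y\|^2-\Sc_Y)$ in the stability inequality of item~(2), and then (for $n=4$) perform the Schoen--Yau conformal change $\tilde g_Y=\phi^{4/(n-3)}g_Y$ driven by the positive first eigenfunction $\phi$ of the boundary-value stability operator to propagate $\Sc_g\ge 0$ to $\Sc_{\tilde g_Y}\ge 0$; when $n=3$ the target is the $2$-dimensional base case which only needs integrated $\int_YK_Y$ control, obtainable directly from stability with $\varphi=1$ and Gauss--Bonnet on $Y$. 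For the boundary, I convert the combination $H_Z-\tfrac{H_S}{\sin J}+\tfrac{\langle\nabla\Psi,\mr dF'(\n)\rangle}{\sin J}$ appearing in item~(2) into a strict mean-curvature gap over $\|\mr dF_{n-2}\|_{\tr,\tilde g_Y}$ on $Z$ via Lemma \ref{lemma:dP}. Applying the inductive hypothesis to $(Y,Z,\tilde g_Y,F_{n-2})$ then gives $\deg F_{n-2}=0$, contradicting item~(3).

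\textbf{Main obstacle.} The technical heart of the proof is the pointwise trace-norm inequality on $Z$: using the orthogonal splittings $T_z\partial M=T_zZ\oplus\mathbb R\n$ and $T_{F'(z)}\sph^{n-1}=\mathbb R\nabla\Psi\oplus T_{F_{n-2}(z)}\sph^{n-2}$, I must establish
\[
\|\mr dF'\|_{\tr,g_{\partial M}}(z)\ge \sin\bigl(\Psi(F'(z))\bigr)\cdot\|\mr dF_{n-2}\|_{\tr,g_Z}(z)+|\langle\nabla\Psi,\mr dF'(\n)\rangle|,
\]
which, combined with $H_{\partial M}\ge\|\mr dF'\|_{\tr}+\delta/2$ and $\sin J=\sin\Psi$ on $Z$, turns the boundary integrand of item~(2) into $H_Z-\|\mr dF_{n-2}\|_{\tr}-\tfrac{\delta}{2\sin J}$. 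The conformal factor from the Schoen--Yau change then modifies both $H_Z$ and $\|\mr dF_{n-2}\|_{\tr}$, and one must track these factors carefully so that the strict gap $\delta>0$ carries through to $\tilde H_Z-\|\mr dF_{n-2}\|_{\tr,\tilde g_Y}\ge\tilde\delta>0$ on $Z$ in the conformally deformed metric. The trace-norm inequality itself is a linear-algebraic computation exploiting the warped-product structure $g_{\sph^{n-1}}=\mr d\Psi^2+\sin^2\Psi\,g_{\sph^{n-2}}$ and the fact $\|\mr dP_{n-1}\|=1/\sin\Psi$ from Lemma \ref{lemma:dP}; the conformal bookkeeping is the most delicate part and is what ultimately constrains the argument to $n\le 4$ via the regularity in Lemma \ref{lemma: existence_mu_bubble}.
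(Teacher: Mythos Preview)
Your proposal is correct and follows essentially the same route as the paper: the Gauss--Bonnet base case, Lemma~\ref{lemma: perturbation} to collapse polar neighborhoods, Lemma~\ref{lemma: existence_mu_bubble} to produce the stable capillary $\mu$-bubble, the Schoen--Yau rewriting via the Gauss equation, and then either a direct $\varphi=1$/Gauss--Bonnet argument on $Y$ (when $n=3$) or a conformal change by the first eigenfunction of the Neumann problem (when $n=4$) to close the induction. Your identification of the key trace-norm splitting $\|\mr dF'\|_{\tr,g_{\partial M}}\ge \sin\Psi\cdot\|\mr dF_{n-2}\|_{\tr,g_Z}+|\langle\nabla\Psi,\mr dF'(\n)\rangle|$ is exactly the linear-algebraic step the paper isolates, and your bookkeeping of the resulting boundary integrand $H_Z-\|\mr dF_{n-2}\|_{\tr}-\tfrac{\delta}{2\sin J}$ matches the paper's conclusion $H_{Z,g_f}\ge\|\mr dF_{n-2}\|_{\tr,g_f}+\widetilde\delta$ after the conformal rescaling.
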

\begin{proof}
Note that the proposition holds for $n=2$ due to the Gauss–Bonnet formula for a compact surface $M^2$ with a nonempty boundary $\partial M$. Specifically, we recall that
\begin{equation*}
    \frac 1 2\int_M \Sc_g d\mathcal{H}_g^2 + \int_{\partial M } k_{\partial M} \, \mr d\mathcal{H}_g^1 =  2\pi\chi(M) \leq 2\pi. 
\end{equation*}
Here, $k_{\partial M}$ is the geodesic curvature of $\partial M$ in $M$,which also coincides with the mean curvature $H_{\partial M}$ of $\partial M $ in $M$. Moreover, under our assumption that $\deg(F) \neq 0$, we obtain
\[
    \frac 1 2\int_M \Sc_g\,\mr d\mathcal{H}_g^2 + \int_{\partial M } k_{\partial M } \,\mr d\mathcal{H}_g^1 \geq 2\pi + \delta \cdot \mathcal{H}_g^1(\partial M ), \quad  \delta > 0 .\]
This implies that $\mathcal{H}_g^1(\partial M ) = 0$,
contradicting the assumption that the boundary is nonempty.

\vspace{1mm}

Next, we employ dimension reduction to extend the argument to manifolds of higher dimensions. Suppose the statement holds for manifolds of dimension \( n-1 \), where \( n \geq 3 \). We now demonstrate that it also holds for manifolds of dimension \( n \geq 3 \). \textbf{For consistency in notation, we will set \( m = n-1 \) when performing the conformal change below}.

We now assume that for manifolds of dimension $n, n\geq 3$, the following holds:
\[H_g\geq \|\mr dF\|_{\tr}+\delta \text{ and } \deg(F) \neq 0\]
for some positive constant $\delta > 0$. Then   Lemma \ref{lemma: perturbation} implies that there exists a smooth map $$F_{n-1}: (\partial M, g_{\partial M}) \rightarrow (\sph^{n-1}, g_{\sph^{n-1}})$$
such that
\begin{itemize}
    \item there are small open balls $B_1, B_2$ in $\partial M$ with $$F_{n-1}(B_1) = \{-p\} \text{ and } F_{n-1}(B_2) = \{p\};$$

    \item $\deg(F_{n-1}) \neq 0$;

    \item $H \geq \|\mr dF_{n-1}\|_{\tr} +  \delta$ for a new small $\delta > 0$. 
\end{itemize}

Thus, the smooth compact Riemannian manifold $(M^n, \partial M, g)$, together with $F_{n-1}$, satisfies the conditions in Lemma \ref{lemma: existence_mu_bubble}. Consequently,there exists a smooth hypersurface $$(Y^m, \partial Y, g_{Y}) \subset (M^n, \partial M , g)$$ with properties (\ref{item: first_variation}), (\ref{item: second_variation}) and (\ref{item: nonzero_degree}) as stated in  Lemma \ref{lemma: existence_mu_bubble}. In particular, we obtain that, for any non-zero $\varphi\in C^\infty(Y)$, we obtain
\begin{align}    \label{stable}
        0<&\int_Y |\nabla \varphi|^2  - \big(\Ric_g(\nu_Y, \nu_Y) + \|A_Y\|^2  \big) \varphi^2 \,\mr d\mathcal{H}_g^{m} +  \\
       \label{stable2} & \int_Z \big(  H_Z - \frac{H_S}{\sin(J)}  +\frac{1}{\sin(J)} \langle \nabla \Psi, \mr dF(\n) \rangle \big)\varphi^2  \,\mr d\mathcal{H}_g^{m-1}.
   \end{align}
Here, let us first write for short
   \[B := H_Z - \frac{H_S}{\sin(J)}  +\frac{1}{\sin(J)} \langle \nabla \Psi, \mr dF(\n) \rangle=H_Z - \frac{H_S}{\sin(J)} - \frac{1}{\sin^2(J)}\frac{\partial \mu_\partial}{\partial \n}.\] 
Moreover, by the Gauss equation (see Schoen-Yau trick in \cite{SY_pmt_1}*{page. 54}) and our assumption of nonnegative scalar curvature on $M$, we have
\[-(\Ric(\nu_Y, \nu_Y)+ \|A\|^2) = -\frac{1}{2}\Sc_g +\frac 1 2 \Sc_{g_Y} -\frac 1 2  \|A\|^2\leq \frac 1 2\Sc_{g_Y}.\]
Thus, the equations in lines \eqref{stable} and \eqref{stable2} together imply that
\begin{equation}\label{eq:>0}
           \int_Y (|\nabla \varphi|^2  +\frac 1 2 \Sc_{g_Y}\varphi^2)\,\mr d\mathcal{H}_g^{m}  
       + \int_Z B\varphi^2  \,\mr d\mathcal{H}_g^{m-1} > 0, \text{for any } \varphi \in C^\infty(Y).
\end{equation}

\vspace{2mm}
\noindent\textbf{Case I: $n=3$.} In this case, we obtain $m=\dim Y=2$, so $Y$ is a connected oriented surface with nonempty boundary. By the uniformization theorem, $Y$ is homeomorphic to $\Sigma^{g,k}$, the genus-$g$ surface with $k$ discs removed ($k\geq 1$). Therefore, we have $$\chi(Y)=2-2g-k\leq 1.$$

Moreover, following Lemma \ref{lemma:dP}, we define the boundary map as follows.
\begin{equation} \label{eq: euler}
     F_{1}= P_{1} \circ F_{2}: (\partial Y, g_{\partial Y}) \rightarrow  (\mathbb{S}^{1}, g_{\mathbb{S}^{1}})
\end{equation}
where $P_{1}$ is the projection map to the equator $\mb S^{2}$ of $\mb S^{2}$. By our assumption on $H_S$ and a direct calculation, we have
$$B\leq H_Z-\Big|\frac{\partial\theta}{\partial s}\Big|,$$
where $\theta$ is the angle parameter of the equator $\mb S^{n-2}=\mb S^1$ and $s$ is the arc length parameter of $Z=\partial Y$.
Therefore, by setting $\varphi=1\in C^\infty(Y)$ in line \eqref{eq:>0}, we obtain
\begin{align*}
    0<&\int_Y\frac 1 2 \Sc_{g_Y}\,\mr d\mathcal{H}_g^{2}  
       + \int_Z(H_Z-\Big|\frac{\partial\theta}{\partial s}\Big|)  \,\mr d\mathcal{H}_g^{1}\\
       \leq &~2\pi\chi(Y)-2\pi.
\end{align*}
The last inequality follows from the Gauss–Bonnet formula. Thus, $\chi(Y)>1$ and it contradicts with the fact that $\chi(Y)>1$ in line \eqref{eq: euler}

\vspace{2mm}
\noindent\textbf{Case II: $n\geq 4$.} Note that our theorem holds for 
$n=4$ due to the insufficient regularity of the capillary $\mu$-bubble. However, the dimension reduction argument is applicable for all dimensions. Therefore, we perform this part for the general case where $n \geq 4$.

\vspace{1mm}
We first consider the following eigenvalue Neumann boundary problem on $Y$,
\begin{equation}\label{eq:Lboundary}
\begin{cases}
        \displaystyle \mathcal Lf : =-\Delta f+\frac{m-2}{4(m-1)}\Sc_{g_Y}f = \kappa f, \text{ in } Y^m,\\
        \displaystyle \frac{\partial f}{\partial\nu_Z}=-\frac{m-2}{2(m-1)}Bf \text{ on } Z=\partial Y,
\end{cases}
\end{equation}
where $\nu_Z$ is the unit outer normal vector field of $Z=\partial Y$ in $Y$.

Then, we claim that the bottom eigenvalue $\kappa$ of the Neumann boundary problem is positive. To prove this, assume for contradiction that $\kappa\leq 0$, then there is a non-zero $f\in C^\infty (Y)$ such that
$$\begin{cases}
        \displaystyle \mathcal Lf=\kappa f, \text{ in } Y^m,\\
        \displaystyle \frac{\partial f}{\partial\nu_Z}=-\frac{m-2}{2(m-1)}Bf \text{ on }Z=\partial Y.
\end{cases}$$
The Stokes theorem and the assumption $\kappa \leq 0$ imply that
\begin{align*}
    0 \geq \langle\mathcal Lf,f\rangle=\int_Y|\nabla f|^2\,\mr d\mathcal{H}_g^{m}+\frac{n-2}{2(n-1)}\Big(\int_Y\frac 1 2\Sc_{g_Y}f^2\,\mr d\mathcal{H}_g^{m}+\int_Z Bf^2\,\mr d\mathcal{H}_g^{m-1}
    \Big).
\end{align*}
Moreover, from the equation in line \eqref{eq:>0}, we obtain
$$-\int_Y|\nabla f|^2\,\mr d\mathcal{H}_g^{m}<\int_Y\frac 1 2\Sc_{g_Y}f^2\,\mr d\mathcal{H}_g^{m}+\int_Z Bf^2\,\mr d\mathcal{H}_g^{m-1}\leq -\frac{2(m-1)}{m-2}\int_Y|\nabla f|^2\,\mr d\mathcal{H}_g^{m}.$$
This leads to the inequality
$$\frac{m}{m-2}\int_Y|\nabla f|^2\,\mr d\mathcal{H}_g^{m}<0.$$
which is a contradiction. Hence, our assumption that $\kappa \leq 0$ must be false, and we conclude that the bottom eigenvalue 
$\kappa$ is positive. This proves our claim.
This leads to a contradiction and hence proves our claim.
Therefore, the classical elliptic theory implies that there exists $\kappa>0$ and a  positive function $f\in C^\infty(Y)$ solving 
$$\begin{cases}
        \displaystyle \mathcal Lf=\kappa f, \text{ in } Y^m,\\
        \displaystyle \frac{\partial f}{\partial\nu_Z}=-\frac{m-2}{2(m-1)}Bf \text{ on } Z=\partial Y.
\end{cases}$$

\vspace{1mm}

Next we consider the conformal metric on $Y$ as follows,
 $$(Y^m, \partial Y, g_f)=(Y, \partial Y, f^{\frac{4}{m-2}}g_Y).$$ 
We denote $\Sc_{g_f}$ by the scalar curvature in $(Y, \partial Y, g_f)$ and $H_{Z, g_f}$ by the mean curvature of $Z=\partial Y$  on $(Y, \partial Y, g_f)$. Recall that
the scalar curvature and the mean curvature under the conformal deformation are given by
\begin{equation}\label{eq:scalar_f}
    \Sc_{g_{f}}=\frac{4(m-1)}{m-2}f^{-\frac{m+2}{m-2}}\left(-\Delta f + \frac{m-2}{4(m-1)}\Sc_{g_Y} f \right),
\end{equation}
and
\begin{equation}\label{eq:mean_f}
    H_{Z, g_{f}} =  f^{-\frac{2}{m-2}}\left(H_{Z, g}  + \frac{2(m-1)}{m-2}\frac{1}{ f}\frac{\partial f}{\partial \nu_Z}\right).
\end{equation}

Then we consider the boundary map
\begin{equation}
     F_{n-2}= P_{n-2} \circ F_{n-1}: (\partial Y, g_{\partial Y}) \rightarrow  (\mathbb{S}^{n-2}, g_{\mathbb{S}^{n-2}}).
\end{equation}
By the definition of the trace norm in line (\ref{def: trace_norm}), a straightforward calculation yields
\begin{align*}
     \|\mr d F_{n-2}\|_{\tr,g_f} &=f^{-\frac{2}{m-2}}\|\mr d P_{n-2}\circ \mr d(F_{n-1}|_Z)\|_{\tr,g_Z}\\
     &=\frac{1}{\sin(J)}f^{-\frac{2}{m-2}}\|\mathbb P_{n-2}\circ \mr d(F_{n-1}|_Z)\|_{\tr,g_Z}.
\end{align*}
Here, we have $\mr d P_{n-2} = \frac{1}{\sin(J)}\mathbb P_{n-2}$ with $\mathbb P_{n-2}$ the orthogonal projection from $T{\mathbb{S}^{n-1}}$ onto the orthogonal complement of $\nabla \Psi$ in $T{\mathbb{S}^{n-1}}$. 

Recall that the definition of the trace norm yields that for any $z\in Z$,
$$\|\mathbb P_{n-2}\circ \mr d(F_{n-2}|_Z)\|_{\tr_{g_Z}} (z)=\sup_{\{u_i\},\{v_i\}}\sum_{i=1}^{n-2}|\langle \big(\mathbb P_{n-2}\circ \mr d(F_{n-1}|_Z)\big)(u_i),v_i\rangle|,$$
where the supremum is taken over all orthonormal basis $\{u_i\}_{1\leq i\leq n-2}$ of $T_z(\partial M)$ and orthonormal vectors $\{v_i\}_{1\leq i\leq n-2}$ of $T_{f(z)}\sph^{n-1}$. Since $\mathbb P_{n-2}$ is self-adjoint, then we have
\begin{align*}
    \|\mathbb P_{n-2}
\circ \mr d(F_{n-1}|_Z)\|_{\tr_{g_Z}} (z)&=\sup_{\{u_i\},\{v_i\}}\sum_{i=1}^{n-2}|\langle  \mr d(F_{n-1}|_Z)(u_i),\mathbb P_{n-2}v_i\rangle|(z)\\
    &=\sup_{\{u_i\},\{w_i\}}\sum_{i=1}^{n-2}|\langle\mr d(F_{n-1}|_Z)(u_i),w_i\rangle|(z)
\end{align*}
where the second supremum is taken over all orthonormal basis $\{u_i\}_{1\leq i\leq n-2}$ of $T_z(\partial M)$ and orthonormal basis $\{w_i\}_{1\leq i\leq n-2}$ of $\mathbb P_{n-2} T_{f(z)}\sph^{n-1}$. Note that  $ \displaystyle \big\{\nabla\Psi, w_1,\ldots,w_{n-2}\big\}$ forms an orthonormal basis of $T_{f(z)}\sph^{n-1}$ and $\displaystyle \big\{ \n, u_1,\ldots,u_{n-2}\big\}$ forms an orthonormal basis of $T_z(\partial M)$, we have
\begin{align*}
    &\ \ \ \ \|P^\perp_{n-2}\circ \mr d(F_{n-1}|_Z)\|_{\tr,g_Z}+|\langle \n,\nabla J \rangle|\\
    &=\|P^\perp_{n-2}\circ\mr d(F_{n-1}|_Z)\|_{\tr,g_Z}+|\langle \mr dF_{n-1}(\n),\nabla \Psi\rangle|\\
    &\leq \|\mr dF_{n-1}\|_{\tr,g_Z}.
\end{align*}
Hence, by our assumption on $H_S$ in line \eqref{eq:delta} and the equation in line \eqref{eq:mean_f}, we obtain
\begin{align*}
	H_{Z, g_f}=&f^{-\frac{2}{m-2}}\left(H_{Z, g}  + \frac{2(m-1)}{m-2}\frac{1}{ f}\frac{\partial f}{\partial \nu_Z}\right)\\
    =&\frac{1}{\sin(J)}f^{-\frac{2}{m-2}}\big(H_S-\langle \n,\nabla J \rangle\big)\\
	\geq&\frac{1}{\sin(J)}f^{-\frac{2}{m-2}}\bigg(\|\mr dF_{n-1}\|_{\tr,g_S}+\delta-|\langle \n,\nabla J\rangle|\bigg)\\
	\geq& \|\mr dF_{n-2}\|_{\tr,g_f}+\delta\cdot \frac{1}{\sin(J)}f^{-\frac{2}{m-2}}.
\end{align*}
Since $F_{n-1}(\partial Y)$ stays away from the poles and $f$ is strictly positive on $Y$, we get 
$$\widetilde\delta\coloneqq \delta\cdot \inf_{Z}\frac{1}{\sin(J)}f^{-\frac{2}{m-2}}>0.$$

To summarize, we obtain a smooth compact Riemannian manifold $(Y^{n-1}, \partial Y, g_f)$  of dimension $m=n-1$ that satisfies the following.
\begin{itemize}
    \item The scalar curvature of $(Y^{n-1}, \partial Y, g_f)$ is given by
\begin{align}
      \label{eq:Sc>=0}\Sc_{g_{f}}&=\frac{4(m-1)}{m-2}f^{-\frac{m+2}{m-2}}\left(-\Delta f + \frac{m-2}{4(m-1)}\Sc_{g_Y} f\right)=\frac{4(m-1)}{m-2}f^{-\frac{m+2}{m-2}}\mathcal Lf  \\
      \nonumber &= \frac{4(m-1)}{m-2}\kappa f^{-\frac{4}{m-2}}>0.
\end{align}
     \item The mean curvature of $Z=\partial Y$ in of $(Y^{n-1}, \partial Y, g_f)$ is given by
    \begin{align}
	\label{eq:H>}H_{Z, g_f}&=f^{-\frac{2}{m-2}}\left(H_{Z, g}  + \frac{2(m-1)}{m-2}\frac{1}{ f}\frac{\partial f}{\partial \nu_Z}\right)\\
    \nonumber &\geq \|\mr dF_{n-2}\|_{\tr,g_f}+\widetilde\delta
\end{align}
for some positive constant $\widetilde\delta>0$, where
$$F_{n-2}: (\partial Y, g_{f}|_{\partial M}) \rightarrow (\sph^{n-2}, g_{\sph^{n-2}})$$
has non-zero degree.
\end{itemize}
This contradicts the assumption that the statement holds for manifolds of dimension $(n-1)$. Hence, we conclude that $\deg(F_{n-1}) = 0$. This finishes the proof.
\end{proof}

\begin{remark}
  The \emph{dimension reduction} argument for mean curvature works the same as the Schoen-Yau dimension reduction for scalar curvature if one can improve the regularity of capillary $\mu$-bubble generically for the manifold of higher dimensions $n \geq 5$. 
\end{remark}

\section{The proof of the main theorems}
\label{sec: proof_main_thm}

In this section, we will prove Theorem \ref{mainthm: scalar_mean_curvature_comparsion} and Theorem \ref{thm: generalized_scalar_mean_comparsion}.

\subsection{Scalar-mean rigidity theorem}
In this subsection, we will prove the scalar-mean rigidity theorem \ref{mainthm: scalar_mean_curvature_comparsion}. Here, we shall state the theorem for the reader's convenience.

\begin{theorem} \label{mainthm: geometric_verison}
       Suppose that $(M^n, \partial M, g), n=2, 3, 4$ is a smooth, compact Riemannian manifold with nonnegative scalar curvature $\Sc_g \geq 0$ and uniformly positive mean curvature  $H_{\partial M} \geq n-1$. If $F: (\partial M, g_{\partial M} )\rightarrow (\mathbb{S}^{n-1}, g_{\mathbb{S}^{n-1}})$ is a distance non-increasing  map of $\deg(F)  \neq 0$, then $F$ is an isometry, and $(M, g)$ is isometric to $(\mathbb{D}^n, g_{\mathbb{D}^n})$.
\end{theorem}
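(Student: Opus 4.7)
The plan is to first establish \textbf{Claim A} from the introduction, namely that $\Sc_g \equiv 0$ on $M$ and $H_{\partial M} \equiv n-1 \equiv \|\mr dF\|_{\tr}$ on $\partial M$, and then deduce the full rigidity via the Shi-Tam boundary-rigidity inequality. The pointwise starting point is that, since $F$ is distance non-increasing, each singular value of $\mr dF$ lies in $[0,1]$, so $\|\mr dF\|_{\tr} \leq n-1$, with equality forcing $\mr dF$ to be an orthogonal isomorphism of tangent spaces. Combined with the hypothesis $H_{\partial M} \geq n-1$, this yields the everywhere inequality $H_{\partial M} \geq \|\mr dF\|_{\tr}$, turning the sought Claim A into an all-equality statement.

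To establish Claim A I argue by contradiction: if any of (a) $\Sc_g > 0$ somewhere in $M$, (b) $H_{\partial M} > n-1$ at a boundary point, or (c) $\|\mr dF\|_{\tr} < n-1$ at a boundary point holds, I perform a conformal deformation $\tilde g = u^{4/(n-2)} g$, with $u > 0$ a smooth solution of a mixed problem for the conformal Laplacian
\[
-\tfrac{4(n-1)}{n-2}\Delta u + \Sc_g\, u = 0 \text{ in } M, \qquad \partial_\nu u = \varphi\, u \text{ on } \partial M,
\]
for a suitably chosen nonnegative $\varphi$. Under this change $\Sc_{\tilde g} \equiv 0$, and the standard transformation laws
\[
H_{\tilde g} = u^{-\frac{2}{n-2}}\!\left(H_g + \tfrac{2(n-1)}{n-2}\varphi\right), \qquad \|\mr dF\|_{\tr,\tilde g} = u^{-\frac{2}{n-2}}\|\mr dF\|_{\tr,g}
\]
allow me to engineer a \emph{uniform} gap $H_{\tilde g} - \|\mr dF\|_{\tr,\tilde g} \geq \delta > 0$: in case (a) the interior scalar excess is traded against boundary mean curvature through the mixed problem, while in (b) and (c) the local boundary excess is amplified by taking $\varphi > 0$ supported near the excess set. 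Since $\deg(F)$ is unaffected by the conformal change, the perturbed data contradict Proposition \ref{prop:weak}. The $n = 2$ case bypasses this via a direct Gauss-Bonnet argument, exactly as in the proof of Proposition \ref{prop:weak}.

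Once Claim A is established, $\|\mr dF\|_{\tr} \equiv n-1$ forces every singular value of $\mr dF$ to equal $1$, so $F$ is a pointwise linear isometry, hence a local Riemannian isometry $(\partial M, g_{\partial M}) \to (\mathbb{S}^{n-1}, g_{\mathbb{S}^{n-1}})$. By compactness $F$ is then a covering; since $\mathbb{S}^{n-1}$ is simply connected for $n \geq 3$ (with the $n=2$ case handled by the nonzero-degree hypothesis together with the distance non-increasing property), $F$ must be a global isometry, and $(\partial M, g_{\partial M})$ is isometric to the round unit sphere. Together with $\Sc_g \equiv 0$ on $M$ and $H_{\partial M} \equiv n-1$ on a spherical boundary, this places us in the equality case of the Shi-Tam inequality recalled in Appendix \ref{sec: Shi-Tam} for $n = 3, 4$; its rigidity statement then forces $(M, g) \cong (\mathbb{D}^n, g_{\mathbb{D}^n})$, completing the proof.

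The main obstacle I expect is the conformal-factor construction in the contradiction step: one needs simultaneous solvability of the mixed boundary-value problem above, strict positivity of $u$ (requiring positivity of the first eigenvalue of the conformal Laplacian under this Robin-type condition, together with a Hopf lemma on $\partial M$), and the engineering of a uniform gap $\delta > 0$ adapted separately to cases (a)--(c). A secondary subtlety is that the multiplicative factor $u^{-2/(n-2)}$ could in principle shrink the boundary gap produced in cases (b), (c); this requires localising $\varphi$ near the excess set and using a continuity/compactness argument to ensure the gap survives globally.
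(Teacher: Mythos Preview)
Your overall architecture---prove \textbf{Claim A} by contradiction via a conformal deformation that feeds into Proposition~\ref{prop:weak}, then conclude by local-isometry plus Shi--Tam rigidity---matches the paper exactly, and your endgame after Claim~A is fine.

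The gap is in the conformal step. You propose to solve
\[
-\tfrac{4(n-1)}{n-2}\Delta u+\Sc_g\,u=0\ \text{in }M,\qquad \partial_\nu u=\varphi\,u\ \text{on }\partial M
\]
for a positive $u$, so that $\Sc_{\tilde g}\equiv 0$. But a positive solution of $L_g u=0$ with this Robin condition exists only when the \emph{first} Robin eigenvalue is exactly zero; your own parenthetical ``requiring positivity of the first eigenvalue'' is the wrong solvability criterion. Concretely, when $\Sc_g\equiv 0$ (so the failure of Claim~A comes only from case~(b) or~(c)), the Rayleigh quotient with $u\equiv 1$ gives $-\int_{\partial M}\varphi<0$ for any $\varphi\geq 0$, $\varphi\not\equiv 0$, so the first eigenvalue is negative and no positive $u$ exists; while $\varphi\equiv 0$ forces $u$ constant and produces no gap. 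Thus your one-step scheme cannot manufacture a uniform $H_{\tilde g}-\|\mr dF\|_{\tr,\tilde g}\geq\delta>0$ in cases~(b),~(c), and ``localising $\varphi$ near the excess set'' does not help: it destroys solvability rather than the gap.

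The paper avoids this by a \emph{two-step} deformation. First it takes the Robin eigenvalue problem with the \emph{fixed} data $\tfrac{n-2}{2(n-1)}(H-\|\mr dF\|_{\tr})$; if any equality in Claim~A fails, the first eigenvalue $\lambda$ is strictly positive, and the positive eigenfunction $v$ gives $g_v=v^{4/(n-2)}g$ with $\Sc_{g_v}=\tfrac{4(n-1)}{n-2}\lambda\,v^{-4/(n-2)}\geq\delta_1>0$ and $H_{g_v}=\|\mr dF\|_{\tr,g_v}$ \emph{exactly}. Second, a small further conformal factor $(1+\varepsilon w)^{4/(n-2)}$ with $\partial_{\nu_{g_v}}w=1$ trades a controlled amount of the now-uniform positive scalar curvature for a uniform boundary gap $H_{g_w}=\|\mr dF\|_{\tr,g_w}+\tfrac{2(n-1)}{n-2}\varepsilon$. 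The point is that all three possible excesses (interior scalar, boundary mean, boundary trace-norm) are first absorbed into a single positive eigenvalue, and only then converted to a uniform $\delta$; trying to do both in one conformal step, as you propose, is what fails.
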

\begin{proof}
   The statement holds for $n=2$ due to the Gauss--Bonnet formula on a smooth compact manifold with non-empty boundary. We will focus on the case for $n = 3, 4$ in the proof.
   
\textbf{Claim A}: Under the assumption of Theorem \ref{mainthm: geometric_verison}, we have
\begin{equation}\label{eq:extremal}
		\Sc_g=0 \text{ on } M; ~H_{\partial M}=\|\mr dF\|_{\tr}\text{ and } \|\mr dF\|_{\tr}=n-1 \text{ on } \partial M.
	\end{equation}

Let us argue by contradiction. Suppose that at least one of these three equalities in line (\ref{eq:extremal}) fails at some point in $M$, then let us consider the following Neumann eigenvalue problem on $(M^n, \partial M, g)$,
	\begin{equation} \label{eq: NB_H}
		\begin{cases}
			-\displaystyle \Delta \varphi +\frac{n-2}{4(n-1)} \Sc_g\varphi = \lambda \varphi,\\
			\displaystyle\frac{\partial \varphi}{\partial \nu}=-\frac{n-2}{2(n-1)}(H-\|\mr dF\|_{\tr})\varphi,
		\end{cases}
	\end{equation}
where $\nu$ is the unit outer normal vector field  of $\partial M$.
The Green formula shows that
\begin{align}
&\ \ \lambda\int_M \varphi^2 \,\mr d\mathcal{H}_g^{n}\\
=&\int_M |\nabla \varphi|^2  +\frac{n-2}{4(n-1)}\Sc_g\varphi^2\,\mr d\mathcal{H}_g^{n} - \int_{\partial M} \varphi \frac{\partial \varphi}{\partial \nu} \,\mr d\mathcal{H}_g^{n-1} \nonumber\\
  =&\int_M |\nabla \varphi|^2\,\mr d\mathcal{H}_g^{n} +\frac{n-2}{4(n-1)}\Sc_g\varphi^2\,\mr d\mathcal{H}_g^{n} + \frac{n-2}{2(n-1)}\int_{\partial M } (H - \|\mr dF\|_{\tr}) \varphi^2 \,\mr d\mathcal{H}_g^{n-1}\nonumber\\
=&\int_M |\nabla \varphi|^2\,\mr d\mathcal{H}_g^{n} +\frac{n-2}{4(n-1)}\Sc_g\varphi^2\, \mr d\mathcal{H}_g^{n} \label{eq:modification}\\
&+\frac{n-2}{2(n-1)}\int_{\partial M} \Big(  (H-(n-1))+((n-1)-\|\mr dF\|_\tr)\Big)\varphi^2\,\mr d\mathcal{H}_g^{n-1}.\nonumber
\end{align}
Since $F$ is distance-non-increasing, we obtain  $$\|\mr dF\|_\tr\leq n-1.$$ 
It follows that $\lambda \geq 0$. If $\lambda =0$, then $\varphi$ is a non-zero constant function. Consequently,
\begin{equation*}
    \Sc_g = 0\text{ on }M;\quad  H = n-1=\|\mr dF\|_\tr\text{ on }\partial M.
\end{equation*}
This contradicts the assumption that at least one of them fails at some point in line \ref{eq:extremal} in $M$. Consequently, the first Neumann eigenvalue $\lambda > 0$.
It implies that there exists a positive function $v$ that solves the Neumann boundary problem in line (\ref{eq: NB_H}) with a positive constant $\lambda > 0$.

\vspace{1mm}

Moreover, we consider the conformal metric on $M$ given by
$$(M, g_v) := (M, v^{\frac{4}{n-2}}g).$$
Then,
\begin{itemize}
    \item The scalar curvature $\Sc_{g_v}$ of $g_v$ on $M$ is given by
\begin{align*}
	\Sc_{g_v}=v^{-\frac{n+2}{n-2}}\Big(-\Delta v+\frac{n-1}{4(n-2)}\Sc_g v\Big)=\lambda v^{-\frac{4}{n-2}}	\geq\delta_1>0,
\end{align*}
where
$\delta_1=\lambda\inf_M v^{-\frac{4}{n-2}} > 0.$

 \item The mean curvature $H_{\partial M}$ of $\partial M$ with respect to $g_v$ is given by
\begin{align*}
	H_{g_v}=v^{-\frac{2}{n-2}}\big(H_{g}+\frac{2(n-1)}{n-2}\frac{1}{v}\frac{\partial v}{\partial \nu_S}\big)
	=v^{-\frac{2}{n-2}}\cdot\|\mr dF\|_{\tr,g_{\partial M}}.
\end{align*}

\item According to the conformal metric, we have
 $$\|\mr dF\|_{\tr,g_v}=v^{-\frac{2}{n-2}}\|\mr dF\|_{\tr,g_{\partial M}}.$$
\end{itemize}
Hence, this conformal change process increases the scalar curvature in the interior $\mathring{M}$ with possibly a sacrifice of the mean curvature on the boundary $\partial M$.

\vspace{1mm}

Next let us work on $(M^n, \partial M , g_v)$ to increase the mean curvature on the boundary using the scalar curvature. Let $\nu_{g_v}$ be the unit outer normal vector field of $\partial M$ with respect to $g_v$ and $w$ an arbitrary smooth function on $M$ such that
$$\frac{\partial w}{\partial \nu_{g_v}}=1.$$
 We further consider the perturbation conformal metric for small $\varepsilon > 0$:
\begin{equation*}
    (M^n, \partial M, g_w) = (M^n, \partial M, (1+\varepsilon w)^\frac{4}{n-2}g_v).
\end{equation*}
\begin{itemize}
    \item The scalar curvature $\Sc_{g_w}$ of $g_w$ on $(M, g_w)$ is given by
$$\Sc_{g_w}=(1+\varepsilon w)^{-\frac{n+2}{n-2}}\Big(\varepsilon\big(-\frac{4(n-1)}{n-2}\Delta w+\Sc_{g_v}w\big)+\Sc_{g_v}\Big).$$
As $\Sc_{g_v}\geq\delta_1>0$, we fix $\varepsilon$ small enough so that
$$2\geq 1+\varepsilon\inf_{M}w\geq 1,$$
and
$$\varepsilon\cdot\inf_M\big(-\frac{4(n-1)}{n-2}\Delta w+\Sc_{g_v}w\big)+\delta_1\geq\frac{\delta_1}{2}.$$
It follows that
$$\Sc_{g_w}\geq 2^{-\frac{n+2}{n-2}}\frac{\delta_1}{2}>0.$$

\item The mean curvature $H_{g_w}$ of $\partial M$ with respect to $g_w$  is given by
\begin{align*}
	H_{g_w}&=(1+\varepsilon w)^{-\frac{2}{n-2}}\big(H_{g_v}+{\frac{2(n-1)}{n-2}\frac{\varepsilon}{1+\varepsilon w}\frac{\partial w}{\partial\n_v}}\big)\\
	&=\|\mr dF\|_{\tr,g_w}+\frac{2(n-1)}{n-2}\varepsilon.
\end{align*}

\item Under the conformal metric, we have
$$\|\mr dF\|_{\tr,g_w}=(1+\varepsilon w)^{-\frac{2}{n-2}}\|\mr dF\|_{\tr,g_v}.$$
\end{itemize}

Finally, we conclude that, if  \textbf{Claim A} fails at some point in $M$, then there exists a smooth, compact
Riemannian manifold $(M, \partial M. g_w)$ coupled with a smooth map $$F:  ( \partial M, g_w)\to(\sph^{n-1},g_{\sph^{n-1}})$$ with the following properties,
\begin{itemize}
    \item  $\Sc_{g_w} >0$ in $M$;
    \item $H_{g_w}\geq \|\mr dF\|_{\tr,g_w}+\delta$ for  some constant $\delta>0$;
    \item $\deg(F) \neq 0$.
\end{itemize}
 This contradicts Proposition \ref{prop:weak}. We remark that the above computation does not have any dimension restriction. The only point that we need $\dim=3,4$ is the smoothness result in Proposition \ref{prop:weak}. Hence, we proved that \textbf{Claim A} holds.
 
 By \textbf{Claim A}, we obtain that
 \begin{equation*}
		\Sc_g=0,~H_{\partial M}=\|\mr dF\|_{\tr}, \ \|\mr dF\|_{\tr_{g_S}}=n-1.
\end{equation*}
It immediately follows that $F\colon \partial M\to\sph^{n-1}$ is a local isometry. As $\sph^{n-1}$ is simply connected for  $n\geq 3$,  we obtain that $F$ is a global isometry. Hence, 
$(M^n,\partial M , g)$ is a smooth, compact manifold with nonempty boundary $(\partial M , g_{\partial M})$ isometric to the standard unit sphere $(\sph^{n-1},g_{\sph^{n-1}})$ and $\Sc_g \geq 0$. Hence, by \cite{Shi_Tam}*{Theorem 1} for $n=3$ and \cite{Shi_Tam_extension}*{Theorem 2} for $n \leq 7$ (see Appendix \ref{sec: Shi-Tam} for the precise statements), we obtain that $(M,\partial M , g)$ is isometric to the standard unit ball $(\mathbb{D}^n, g_{\mathbb{D}^n})$. The proof is finished.
\end{proof}

\subsection{Listing type scalar-mean comparison theorem}
In this subsection, we will prove Theorem \ref{thm: generalized_scalar_mean_comparsion}. Let us state Theorem \ref{thm: generalized_scalar_mean_comparsion} again below for reader's conveniences.

\begin{theorem} \label{mainthm: Listing_type}
     Suppose that $(M^n, \partial M, g), n =2, 3, 4$ is a smooth, compact Riemannian manifold with nonnegative scalar curvature $Sc_g \geq 0$ and mean convex boundary $H_{\partial M} > 0$. Let $F: (\partial M, g_{\partial M}) \rightarrow (\mathbb{S}^{n-1}, g_{\sph^{n-1}})$ be a smooth map with $\deg(F) \neq 0$. If $H_{\partial M} \geq \|\mr dF\|(n-1)$, then there exists constant $c > 0$ such that $F: (M^n,  cg) \rightarrow \displaystyle (\mathbb{D}^n, g_{\mathbb{D}^n})$ is an isometry.  
     \end{theorem}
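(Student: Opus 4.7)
The plan is first to upgrade the hypothesis to the extremality identity
\[
    \Sc_g \equiv 0 \text{ on } M, \qquad H_{\partial M} = \|\mr dF\|_{\tr} = (n-1)\|\mr dF\| \text{ on } \partial M
\]
from Claim B of the introduction, and then to perform a harmonic conformal change that turns the boundary into the round unit sphere, so that the Shi-Tam inequality of Appendix \ref{sec: Shi-Tam} becomes applicable. The case $n=2$ is handled by Gauss-Bonnet exactly as in Theorem \ref{mainthm: geometric_verison}, so I may assume $n=3,4$.

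For Claim B, I would repeat the Neumann eigenvalue argument of Theorem \ref{mainthm: geometric_verison}. The elementary bound $\|\mr dF\|_{\tr}\leq (n-1)\|\mr dF\|$ combined with the hypothesis $H_{\partial M}\geq (n-1)\|\mr dF\|$ gives $H_{\partial M}\geq \|\mr dF\|_{\tr}$, and this together with $\Sc_g\geq 0$ forces the first eigenvalue $\lambda$ of
\[
    -\Delta\varphi+\frac{n-2}{4(n-1)}\Sc_g\varphi=\lambda\varphi,\qquad \frac{\partial\varphi}{\partial\nu}=-\frac{n-2}{2(n-1)}(H_{\partial M}-\|\mr dF\|_{\tr})\varphi
\]
to be nonnegative. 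If $\lambda>0$, the same two-step conformal deformation as in Theorem \ref{mainthm: geometric_verison} (first $v^{4/(n-2)}g$ for the positive eigenfunction $v$, then a perturbation $(1+\varepsilon w)^{4/(n-2)}$) produces a metric satisfying $\Sc>0$ and $H\geq \|\mr dF\|_{\tr}+\delta'$, contradicting Proposition \ref{prop:weak}. Hence $\lambda=0$, yielding $\Sc_g\equiv 0$ and $H_{\partial M}=\|\mr dF\|_{\tr}$; the sandwich $\|\mr dF\|_{\tr}\leq (n-1)\|\mr dF\|\leq H_{\partial M}=\|\mr dF\|_{\tr}$ then upgrades this to $\|\mr dF\|_{\tr}=(n-1)\|\mr dF\|$. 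In particular all singular values of $\mr dF$ coincide, so $F$ is pointwise conformal with positive factor $\|\mr dF\|$; as a local diffeomorphism onto the simply connected $\sph^{n-1}$ ($n\geq 3$), it is then a global conformal diffeomorphism with $|\deg F|=1$.

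The essential new ingredient, absent in Theorem \ref{mainthm: geometric_verison}, is that Claim B does not place $(\partial M,g_{\partial M})$ isometrically inside $\sph^{n-1}$, only conformally. I would solve the Dirichlet problem
\[
    \Delta_g u = 0 \text{ in } M,\qquad u|_{\partial M}=\|\mr dF\|^{(n-2)/2},
\]
whose strictly positive solution exists by the maximum principle, and set $\tilde g = u^{4/(n-2)}g$. Because $\Sc_g=0$ and $u$ is $g$-harmonic, the conformal scalar curvature law gives $\Sc_{\tilde g}\equiv 0$, while on the boundary $\tilde g|_{\partial M}=\|\mr dF\|^2 g|_{\partial M}=F^* g_{\sph^{n-1}}$, so $F\colon(\partial M,\tilde g|_{\partial M})\to(\sph^{n-1},g_{\sph^{n-1}})$ is now a genuine Riemannian isometry. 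A direct calculation using the conformal mean curvature formula together with $u^2|_{\partial M}=\|\mr dF\|^{n-2}$, $H_{\partial M}=(n-1)\|\mr dF\|$, and Green's identity for the harmonic $u$ then gives
\[
    \int_{\partial M}\tilde H\,\mr d\tilde A = (n-1)\int_{\partial M}\|\mr dF\|^{n-1}\,\mr dA + \frac{2(n-1)}{n-2}\int_M|\nabla u|^2\,\mr dV.
\]
Since $F$ is a conformal diffeomorphism of degree $\pm 1$, the pullback identity $\|\mr dF\|^{n-1}\mr dA = F^*\mr dA_{\sph^{n-1}}$ evaluates the first integral to $|\sph^{n-1}|$.

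Applying the Shi-Tam inequality to $(M,\tilde g)$, whose interior is scalar-flat and whose boundary is the round unit sphere, gives $\int_{\partial M}\tilde H\,\mr d\tilde A\leq (n-1)|\sph^{n-1}|$; comparing with the identity above forces $\int_M|\nabla u|^2=0$, so $u\equiv u_0$ is constant. Constancy of $u$ promotes $\|\mr dF\|\equiv u_0^{2/(n-2)}$ to a global constant on $\partial M$, so $F$ is a genuine homothety. Equality in Shi-Tam then identifies $(M,\tilde g)\cong(\mathbb D^n,g_{\mathbb D^n})$, and since $\tilde g=u_0^{4/(n-2)}g$, we conclude $(M,cg)\cong(\mathbb D^n,g_{\mathbb D^n})$ with $c=u_0^{4/(n-2)}$. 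I expect the harmonic conformal reduction together with the degree-to-integral identification to be the most delicate step: the entire argument hinges on the precise coefficient matching that makes the gradient integral $\int_M|\nabla u|^2$ appear on the correct side of Shi-Tam, so that rigidity and the constancy of $\|\mr dF\|$ can be extracted simultaneously.
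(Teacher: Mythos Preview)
Your proposal is correct and follows essentially the same route as the paper: the Neumann eigenvalue/conformal argument to establish Claim~B, then the harmonic Dirichlet extension $u$ with $u|_{\partial M}=\|\mr dF\|^{(n-2)/2}$ to make the boundary round, followed by the Shi--Tam inequality and the Green-identity computation forcing $\nabla u\equiv 0$. Your boundary value for $u$ is in fact the one that makes the mean-curvature computation come out correctly (the paper's displayed formula $h^{4/(n-2)}=\|\mr dF\|^{-2}$ is a typo for $\|\mr dF\|^{2}$, as its own subsequent line $H_{g_u}=(n-1)+\cdots$ confirms).
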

\begin{proof}
    We still consider the case of the dimension $n=3,4$.

    \textbf{Claim B}:
    \begin{equation*}
        \Sc_g = 0\text{ on }M;\ H_{\partial M } = \|\mr dF\|_{\tr} = \|\mr dF\|(n-1)\text{ on }\partial M.
    \end{equation*}
    The argument of \textbf{Claim B} is similar to that of the \textbf{Claim A} in the proof of Lemma \ref{mainthm: geometric_verison} with minor changes. For example, line \eqref{eq:modification} is replaced by
 \begin{align}
&\ \ \lambda\int_M \varphi^2 \,\mr d\mathcal{H}_g^{n}\\
=&\int_M |\nabla \varphi|^2  +\frac{n-2}{4(n-1)}\Sc_g\varphi^2\,\mr d\mathcal{H}_g^{n} - \int_{\partial M} \varphi \frac{\partial \varphi}{\partial \nu} \,\mr d\mathcal{H}_g^{n-1} \nonumber\\
  =&\int_M |\nabla \varphi|^2\,\mr d\mathcal{H}_g^{n} +\frac{n-2}{4(n-1)}\Sc_g\varphi^2\,\mr d\mathcal{H}_g^{n} + \frac{n-2}{2(n-1)}\int_{\partial M } (H - \|\mr dF\|_{\tr}) \varphi^2 \,\mr d\mathcal{H}_g^{n-1}\nonumber\\
=&\int_M |\nabla \varphi|^2\,\mr d\mathcal{H}_g^{n} +\frac{n-2}{4(n-1)}\Sc_g\varphi^2\, \mr d\mathcal{H}_g^{n} \label{eq:modification2}\\
&+\frac{n-2}{2(n-1)}\int_{\partial M} \Big(  (H-\|\mr dF\|(n-1))+(\|\mr dF\|(n-1)-\|\mr dF\|_\tr)\Big)\varphi^2\,\mr d\mathcal{H}_g^{n-1}.\nonumber
\end{align}
We omit the rest of details for \textbf{Claim B}. 

As a result of \textbf{Claim B}, we obtain that for any $x\in \partial M$:
\begin{itemize}
    \item either $dF_x=0$ and $H(x)=0$,
    \item or $F$ at $x$ is a homothety, namely $g_{\partial M}=\|\mr dF\|^{-2}F^*g_{\sph^{n-1}}$, and $H=\|\mr dF\|(n-1)$.
\end{itemize}
By our assumption that $H>0$, the first kind of points does not exist. Therefore, we obtain that
\begin{equation} \label{eq: metric_scaling_relation}
  g_{\partial M } = \|\mr dF\|^{-{2}{}}F^*g_{\mathbb{S}^{n-1}}.  
\end{equation}
In particular, $F$ is a local diffeomorphism, hence a global diffeomorphism as $\sph^{n-1}$ is simply connected.

Moreover, if we set $h^{\frac{4}{n-2}} = \|\mr dF\|^{-2}$  on $\partial M$, then the equation in line (\ref{eq: metric_scaling_relation}) can be rewritten as 
$$g_{\partial M } = h^{\frac{4}{n-2}}F^*g_{\mathbb{S}^{n-1}} \text{ on } \partial M.$$

Consider the Dirichlet boundary problem as follows.
\begin{equation}\label{eq: harmonic}
    \begin{cases} 
		\Delta u=0,  \text{ in } M,\\
		u=h, \ \   \text{ on } {\partial M}.  
	\end{cases}
\end{equation}	
The standard elliptic theory and maximum principle shows that there exists a positive harmonic function $u$ that solves the Dirichlet boundary problem in line (\ref{eq: harmonic}). 

We further consider the conformal metric on $M$ given by
	$$g_u=u^{\frac{4}{n-2}}g, \text{ in } M.$$
\begin{itemize}
    \item The scalar curvature $\Sc_{g_u}$ of $g_u$  on $(M, \partial M, g_u)$ is given by
	$$\Sc_{g_u}=u^{-\frac{n+2}{n-2}}\big(-\frac{4(n-1)}{n-2}\Delta u+\Sc_g u\big)=0.$$

    \item The mean curvature $H_{g_u}$ of $\partial M$ with respect to $ g_u$ is given by 
    $$H_{g_u}=\frac{1}{u^{\frac{2}{n-2}}}\big(H_g+\frac{2(n-1)}{n-2}\frac{1}{u}\frac{\partial u}{\partial\nu}\big)=(n-1)+\frac{2(n-1)}{n-2}\frac{1}{u^{\frac{n}{n-2}}}\frac{\partial u}{\partial \nu},$$
    where $\nu$ is the unit, outer normal vector field of $\partial M$.

    \item Under the map $F$, $(\partial M, g_u)$ is isometric to $(\sph^{n-1}, g_{\sph^{n-1}})$.
\end{itemize}

Finally, we calculate the integral of $H_{g_u}$ on $(\sph^{n-1}, g_{\sph^{n-1}})$:
\begin{align}
		&\ \ \ \int_{\partial M} H_{g_u}\,\mr d\mathcal{H}_{g_{\sph^{n-1}}}^{n}\\
        &=\int_{\partial M} (n-1)\,\mr d\mathcal{H}_{g_{\sph^{n-1}}}^{n-1}+\frac{2(n-1)}{n-2}\int_{\partial M} \frac{1}{u^{\frac{n}{n-2}}}\frac{\partial u}{\partial \nu_S}\cdot u^{\frac{2(n-1)}{n-2}} \,\mr d\mathcal{H}_{g}^{n-1}\nonumber \\
        &=\int_{\partial M} (n-1)\,\mr d\mathcal{H}_{g_{\sph^{n-1}}}^{n-1}+\frac{2(n-1)}{n-2}\int_{\partial M} u\cdot \frac{\partial u}{\partial\nu_S}\,\mr d\mathcal{H}_{g}^{n-1}\nonumber \\
		&=\int_{\partial M} (n-1)\,\mr d\mathcal{H}_{g_{\sph^{n-1}}}^{n-1}+ \frac{2(n-1)}{n-2}\int_{M} u\Delta u \,\mr d\mathcal{H}_g^n + \frac{2(n-1)}{n-2}\int_{\partial M } |\nabla u|^2 \,\mr d\mathcal{H}_g^n \label{eq:integral}\\
  		&=\int_{\partial M} (n-1)\,\mr d\mathcal{H}_{g_{\sph^{n-1}}}^{n-1} + \frac{2(n-1)}{n-2}\int_{M } |\nabla u|^2\,\mr d\mathcal{H}_g^n\nonumber  \\
		&\geq\int_{\partial M} (n-1)\,\mr d\mathcal{H}_{g_{\sph^{n-1}}}^{n-1}.\nonumber 
\end{align}

To summarize, we proved that $(M, \partial M, g_u)$ is  a smooth, compact Riemannian manifold such that
\begin{enumerate}
    \item $\Sc_{g_u} =0$ on $M$,
    \item $(\partial M, g_u)$ is isometric to $(\sph^{n-1}, g_{\sph^{n-1}})$,
    \item $\displaystyle\int_{\partial M} H_{g_u}\,\mr d\mathcal{H}_{g_{\sph^{n-1}}}^{n} \geq \int_{\partial M} (n-1)\,\mr d\mathcal{H}_{g_{\sph^{n-1}}}^{n-1}$.
\end{enumerate}
By \cite{Shi_Tam}*{Theorem 1} for $n=3$ and \cite{Shi_Tam_extension}*{Theorem 2} for $n\leq 7$, we obtain that $(M^n, \partial M, g_u)$ is isometric to $(\mathbb{D}^n, \mathbb{S}^{n-1}, g_{\mathbb{D}^n})$, and
$$\int_{\partial M} H_{g_u}\,\mr d\mathcal{H}_{g_{\sph^{n-1}}}^{n} =\int_{\partial M} (n-1)\,\mr d\mathcal{H}_{g_{\sph^{n-1}}}^{n-1}.$$
As a result, the (last) inequality of line \eqref{eq:integral} is an equality. This implies that  $\nabla u=0 $ in $M$. Hence, $u$ is positive constant in $M$ and then $h$ is a positive constant function on $\partial M$. We finished the proof.
\end{proof}

\section{Lipschitz scalar-mean rigidity} \label{sec: Lip_scalar_mean_rigdity}
In this section, we prove Theorem \ref{thm:Lipschitz} stated as follows.
\begin{theorem}\label{thm:Lipschitz2}
      Suppose that $(M^n, \partial M, g), n=2, 3, 4$ is a smooth, compact Riemannian manifold with nonnegative scalar curvature $\Sc_g \geq 0$ and uniformly positive mean curvature  $H_{\partial M} \geq n-1$. If $F: (\partial M, g_{\partial M}) \to (\mathbb{S}^{n-1}, g_{\mathbb{S}^{n-1}})$ is a distance non-increasing \textbf{Lipschitz} map of $\deg(F)  \neq 0$, then $F$ is a \textbf{smooth} isometry, and $(M, g)$ is isometric to $(\mathbb{D}^n, g_{\mathbb{D}^n})$.
\end{theorem}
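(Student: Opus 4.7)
The plan is to run the proof of Theorem \ref{mainthm: geometric_verison} with a refined trace function that tolerates merely Lipschitz regularity of $F$, then invoke a Lipschitz-to-smooth rigidity result at the end. First, I would introduce an \emph{oriented} trace function $[\mr dF]_{\tr}(x)$ defined at Rademacher-differentiability points as the supremum in \eqref{def: trace_norm} restricted to positively oriented orthonormal frames $\{u_i\}$ of $T_x\partial M$ and $\{v_i\}$ of $T_{F(x)}\sph^{n-1}$. By construction $[\mr dF]_{\tr}\leq \|\mr dF\|_{\tr}\leq n-1$ almost everywhere, with equality throughout precisely when $\mr dF_x$ is a pointwise orientation-preserving isometry. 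Because $[\mr dF]_\tr$ is a signed quantity it is stable under smooth approximation: using heat-semigroup mollification on $\sph^{n-1}$ one obtains smooth $F_\epsilon$ with $\deg(F_\epsilon)=\deg(F)$ and $\|\mr dF_\epsilon\|_{\tr}\leq [\mr dF]_{\tr}+o(1)$ as $\epsilon\to 0$, in the integral sense needed below.

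The heart of the proof is Claim C,
\begin{equation*}
\Sc_g=0 \text{ on } M, \qquad H_{\partial M}=[\mr dF]_{\tr}=\|\mr dF\|_{\tr}=n-1 \text{ a.e. on } \partial M.
\end{equation*}
I would argue by contradiction: if one of these equalities fails on a positive-measure set, then the Neumann eigenvalue problem
\begin{equation*}
-\Delta\varphi+\tfrac{n-2}{4(n-1)}\Sc_g\varphi=\lambda\varphi \text{ in } M,\quad \tfrac{\partial\varphi}{\partial\nu}=-\tfrac{n-2}{2(n-1)}\bigl(H-[\mr dF]_{\tr}\bigr)\varphi \text{ on }\partial M
\end{equation*}
has strictly positive first eigenvalue and admits a positive eigenfunction $v\in C^{1,\alpha}$ via elliptic regularity for Robin problems with $L^\infty$ coefficient. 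The conformal change $g\mapsto v^{4/(n-2)}g$, followed by the perturbation $g_v\mapsto (1+\epsilon w)^{4/(n-2)}g_v$ exactly as in the proof of Theorem \ref{mainthm: geometric_verison}, yields a smooth metric $g'$ on $M$ with $\Sc_{g'}>0$ throughout and $H_{g'}\geq [\mr dF]_{\tr,g'}+\delta$ a.e.\ on $\partial M$ for some $\delta>0$. Applying Proposition \ref{prop:weak} to $(M,g')$ together with the smooth approximant $F_\epsilon$ then gives $\deg(F_\epsilon)=0$, contradicting $\deg(F)\neq 0$.

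Once Claim C is established, a pointwise linear-algebra lemma (which I would formulate as Lemma \ref{lemma:orientedTrace}) shows that the simultaneous equalities $[\mr dF]_{\tr}=\|\mr dF\|_{\tr}=n-1$ together with $\|\mr dF\|\leq 1$ force $\mr dF_x$ to be an orientation-preserving orthogonal transformation at almost every $x\in\partial M$. Hence $F\colon (\partial M,g_{\partial M})\to(\sph^{n-1},g_{\sph^{n-1}})$ is a Lipschitz distance non-increasing map whose differential is almost everywhere a pointwise isometry, and the Lipschitz rigidity of \cite{cecchini2022lipschitz} combined with Myers--Steenrod \cite{Myers_Steenrod} upgrades it to a smooth Riemannian isometry. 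Finally, $(\partial M,g_{\partial M})\cong(\sph^{n-1},g_{\sph^{n-1}})$ together with $\Sc_g\geq 0$ feeds into the Shi--Tam inequality (Appendix \ref{sec: Shi-Tam}) to identify $(M,g)$ with $(\mathbb{D}^n,g_{\mathbb{D}^n})$. The main obstacle is the pointwise approximation step: a naive mollification mixes orientations of the differential and fails to preserve the full unoriented trace-norm bound, which is exactly why the oriented trace function is introduced. Verifying the clean approximation bound $\|\mr dF_\epsilon\|_{\tr}\leq [\mr dF]_{\tr}+o(1)$ and checking that the oriented trace interacts correctly with the Robin boundary condition under the conformal changes is the delicate technical core of the argument.
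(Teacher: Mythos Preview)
Your overall strategy mirrors the paper's: introduce the oriented trace $[\mr dF]_{\tr}$, run the Robin eigenvalue argument to force Claim~C, then invoke \cite{cecchini2022lipschitz} and Myers--Steenrod, and finish with Shi--Tam. However, there is a genuine gap at the approximation step.

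The inequality you assert, $\|\mr dF_\epsilon\|_{\tr}\leq [\mr dF]_{\tr}+o(1)$, is false. Sublinearity of the \emph{oriented} trace (Lemma~\ref{lemma:orientedTrace}) only yields $[\mr dF_\epsilon]_{\tr}\leq [\mr dF]_{\tr}\ast\phi_\epsilon$; it gives no control of the \emph{unoriented} trace norm $\|\mr dF_\epsilon\|_{\tr}$ in terms of $[\mr dF]_{\tr}$. A concrete obstruction: if $\mr dF_x$ is an orientation-reversing isometry on a region of positive measure (which is not ruled out a priori), then there $[\mr dF]_{\tr}=n-3$ while $\|\mr dF\|_{\tr}=n-1$, and mollification does not close this gap. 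You even flag this phenomenon in your final paragraph, yet the argument still relies on the false bound in order to invoke Proposition~\ref{prop:weak}, whose hypothesis is $H\geq\|\mr dF\|_{\tr}+\delta$.

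The paper closes this gap differently: it proves a separate extremality statement, Proposition~\ref{prop:weakLipschitz}, in which the entire dimension-reduction argument of Proposition~\ref{prop:weak} is rerun with $[\,\cdot\,]_{\tr}$ in place of $\|\cdot\|_{\tr}$. The key observation making this work is that for an oriented orthonormal frame $\{\n,u_1,\dots,u_{n-2}\}$ of $T_zS$ and an oriented frame $\{\nabla\Psi,w_1,\dots,w_{n-2}\}$ of $T_{F(z)}\sph^{n-1}$, one has the \emph{signed} splitting
\[
[\mr dF_{n-1}]_{\tr,g}\ \geq\ \langle \mr dF_{n-1}(\n),\nabla\Psi\rangle + \sin(J)\,f^{\frac{2}{m-2}}\,[\mr dF_{n-2}]_{\tr,g_f},
\]
with no absolute values, so the inductive bound on $H_{Z,g_f}$ descends with the oriented trace intact. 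After approximating $F$ by a smooth $F'$ (so that $[\mr dF']_{\tr}$ is close to $[\mr dF]_{\tr}$ in $L^p$, which \emph{is} correct by sublinearity), one applies Proposition~\ref{prop:weakLipschitz} rather than Proposition~\ref{prop:weak}. Your proposal is missing this oriented-trace version of the extremality proposition; once you add it, the rest of your outline goes through essentially as written.
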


We first introduce an oriented trace function for oriented vector spaces. Recall that an oriented vector space is a vector space together with a given choice of orientation.  
\begin{definition}\label{def:orientedTrace}
Let $U,V$ be $n$-dimensional oriented vector spaces with inner products $g,g'$, and $T\colon U\to V$ a linear transformation. The oriented trace function of $T$ is defined by
    $$[T]_\tr\coloneqq \sup_{\{u_i\},\{v_i\}}\sum_{i=1}^n\langle Tu_i,v_i\rangle_{g'},$$
    where the supremum is taken among all oriented orthonormal basis $\{u_i\}_{1\leq i\leq n}$ of $(U,g)$ and oriented orthonormal basis $\{v_i\}_{1\leq i\leq n}$ of $(V,g')$.

\end{definition}

We shall possibly write $[T]_{\tr,g}=[T]_\tr$ whenever it is necessary to emphasize its dependency on the inner product $g$. The oriented trace function has the properties as follows.

\begin{lemma}\label{lemma:orientedTrace}
    If $U,V$ are $n$-dimensional oriented vector spaces ($n\geq 2$) with inner products $g,g'$ respectively, then the oriented trace function is sublinear and nonnegative. Moreover, if  $T\colon U\to V$ is a linear transformation, then $$[T]_\tr\leq \|T\|_\tr.$$
    In particular, the equality holds if and only if
    \begin{itemize}
        \item either $T$ is not invertible,
        \item or $T$ is invertible and $T$ is orientation preserving.
    \end{itemize}
\end{lemma}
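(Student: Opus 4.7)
The plan is to diagonalize $T$ via the singular value decomposition and reduce everything to the orthogonal Procrustes problem. First I would fix positively oriented orthonormal bases $\{e_i\}$ of $(U,g)$ and $\{f_i\}$ of $(V,g')$, let $A$ be the matrix of $T$ in these bases, and write an SVD $A=P\Sigma Q^T$ with $P,Q\in O(n)$ and $\Sigma=\mathrm{diag}(\sigma_1,\dots,\sigma_n)$, $\sigma_1\geq\cdots\geq\sigma_n\geq 0$. Any other positively oriented orthonormal bases of $U$ and $V$ are obtained from $\{e_i\},\{f_i\}$ by matrices $U_1,U_2\in SO(n)$, and the cyclic property of the trace gives
\[
\sum_{i=1}^n\langle Tu_i,v_i\rangle_{g'}=\tr(U_2^T A U_1)=\tr(\Sigma W),\qquad W:=Q^T U_1 U_2^T P.
\]
As $U_1,U_2$ vary independently in $SO(n)$, the matrix $W$ sweeps out exactly $\{W\in O(n):\det W=\det P\det Q\}$. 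When $T$ is invertible this sign coincides with $\sgn(\det T)$ and so detects whether $T$ preserves or reverses orientation; when $T$ is non-invertible, $\sigma_n=0$ and the freedom in the SVD lets us arrange $\det P\det Q=+1$.

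The second step is to invoke the classical orthogonal Procrustes identities
\[
\max_{W\in SO(n)}\tr(\Sigma W)=\sum_{i=1}^n\sigma_i,\qquad \max_{W\in O(n),\,\det W=-1}\tr(\Sigma W)=\sum_{i=1}^{n-1}\sigma_i-\sigma_n,
\]
attained at $W=I$ and $W=\mathrm{diag}(1,\dots,1,-1)$ respectively. Combined with $\|T\|_\tr=\sum_i\sigma_i$ (which follows from the same SVD reduction applied to the unconstrained maximum $\max_{W\in O(n)}\tr(\Sigma W)$, the absolute values in the definition of $\|\cdot\|_\tr$ allowing independent sign flips of the $v_i$), these yield $[T]_\tr\leq\|T\|_\tr$ with equality precisely when $W$ ranges over all of $SO(n)$. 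By the previous paragraph this occurs exactly when $T$ is orientation preserving or non-invertible.

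Nonnegativity and sublinearity are then quick. If $T$ is orientation preserving or non-invertible, $[T]_\tr=\sum\sigma_i\geq 0$ trivially; if $T$ is invertible and orientation reversing, $[T]_\tr=\sigma_1+\cdots+\sigma_{n-1}-\sigma_n\geq\sigma_1-\sigma_n\geq 0$, using $n\geq 2$. Sublinearity is immediate from the supremum definition,
\[
[T_1+T_2]_\tr=\sup_{\{u_i\},\{v_i\}}\sum_i\bigl(\langle T_1u_i,v_i\rangle+\langle T_2 u_i,v_i\rangle\bigr)\leq[T_1]_\tr+[T_2]_\tr,
\]
and positive homogeneity $[cT]_\tr=c[T]_\tr$ for $c\geq 0$ is obvious. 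The main obstacle is the second Procrustes identity $\max_{\det W=-1}\tr(\Sigma W)=\sum_{i<n}\sigma_i-\sigma_n$, which is not immediate from the naive bound $|W_{ii}|\leq 1$ (this only gives $\sum_i\sigma_i$); it is a standard result (Kabsch/Umeyama), proved by writing $W=\mathrm{diag}(1,\dots,1,-1)\,W'$ with $W'\in SO(n)$ and analyzing $\tr(\mathrm{diag}(\sigma_1,\dots,\sigma_{n-1},-\sigma_n)W')$. Once this identity is in hand, carefully tracking the sign of $\det P\det Q$ through the SVD is what yields the precise equality characterization.
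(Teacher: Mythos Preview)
Your argument is correct. Both you and the paper begin with the singular value decomposition, but the organization diverges from there. The paper works directly with bases: it exhibits a single oriented pairing giving value zero (for nonnegativity), exhibits a pairing achieving $\sum_i\lambda_i$ when some $\lambda_j=0$ (for the non-invertible equality case), and for the invertible equality case takes a maximizing pair $\{u_i\},\{v_i\}$, expands in the SVD bases via change-of-basis matrices $A,B$, and uses Cauchy--Schwarz on the columns to force $A=B$, hence $\{f_i\}$ oriented. Your route instead parametrizes all admissible pairings by $W\in O(n)$ with $\det W=\det P\det Q$, reduces $[T]_\tr$ to $\max_W\tr(\Sigma W)$ over the appropriate $O(n)$-coset, and invokes the Procrustes identities to evaluate this maximum explicitly as $\sum_i\sigma_i$ or $\sum_{i<n}\sigma_i-\sigma_n$. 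This buys you the exact value of $[T]_\tr$ in the orientation-reversing case, which the paper never computes, and makes the equality characterization a one-line read-off. The cost is that the identity $\max_{\det W=-1}\tr(\Sigma W)=\sum_{i<n}\sigma_i-\sigma_n$ is cited rather than proved (your sketch of reducing it to the $SO(n)$ case with diagonal $\mathrm{diag}(\sigma_1,\dots,\sigma_{n-1},-\sigma_n)$ is correct but still needs the general $SO(n)$-Procrustes result for non-positive diagonals), whereas the paper's Cauchy--Schwarz argument is entirely self-contained.
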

\begin{proof}
By the definition of oriented trace function, it is direct that 
$$[sT]_\tr=s[T]_\tr,~\forall s\geq 0,\text{ for any } T\colon U\to V$$
and
$$[T_1+T_2]_\tr\leq [T_1]_\tr+[T_2]_\tr,\text{ for any }  T_1,T_2\colon U\to V.$$

    Given any  $T\colon U\to V$ linear transformation, we consider the singular value decomposition of $T$, namely the orthonormal basis $\{e_i\}_{1\leq i\leq n}$ of $U$ and $\{f_i\}_{1\leq i\leq n}$ of $V$ with
    \begin{equation} \label{eq: svd}
        Te_i=\lambda_if_i
    \end{equation}
    for some $\lambda_i\geq 0$. We may assume that $\{e_i\}_{1\leq i\leq n}$ is an oriented, orthonormal basis of $U$,  and note that one of the basis $\{\pm f_n,f_1,f_2,\ldots,f_{n-1}\}$ forms an oriented orthonormal basis of $V$. A direct check shows that
    $$\langle Te_1,\pm f_n\rangle+\sum_{i=2}^n\langle Te_i,f_{i+1}\rangle=0.$$
    Hence 
    $$[T]_\tr\geq 0.$$
    \vspace{2mm}
    
    Note that the definitions of trace norm and trace function indicates direclty that $$[T]_\tr\leq \|T\|_\tr.$$ 
    Moreover, if $T$ is not invertible, without loss of generality,  we may assume that $\lambda_1=0$. Note that one of the basis $\{\pm f_1,f_2,\ldots,f_n\}$ forms an oriented, orthonormal basis of $V$, we have
    $$[T]_\tr\geq \sum_{i=2}^n\langle Te_i,f_i\rangle=\|T\|_\tr.$$
    Hence, we obtain 
    $$[T]_\tr=\|T\|_\tr.$$

    Next, if $T$ is invertible and $[T]_\tr=\|T\|_\tr$, then we suppose that,  for the oriented orthonormal basis $\{u_i\}_{1\leq i\leq n}$ of $U$ and oriented orthonormal basis $\{v_i\}_{1\leq i\leq n}$ of $V$, we have
    $$[T]_\tr=\sum_{i=1}^n\langle Tu_i,v_i\rangle.$$
    Hence, we obtain
    \begin{equation}\label{eq:orTrace}
        [T]_\tr=\sum_{i=1}^n\langle Tu_i,v_i\rangle= \sum_{i=1}^n|\langle Tu_i,v_i\rangle|= \|T\|_\tr.
    \end{equation}
    
    \vspace{2mm}
    
    Finally, given the singular value decomposition of $T$ in line \ref{eq: svd}, we assume that
    $$u_i=\sum_{j=1}^n a_i^je_j,~v_i=\sum_{k=1}^n b_i^k f_k.$$
    Here, we denote $A=(a_i^j)_{n\times n}$ and $B=(b_i^j)_{n\times n}$.  Note that  $\{e_i\}_{1\leq i\leq n}$ is oriented by our assumption, we have $\det(A)>0$. The equality in line \eqref{eq:orTrace} yields that
$$\sum_{j=1}^n\lambda_j\Big(\sum_{i=1}^na_i^jb_i^j\Big)=\sum_{i=1}^n\Big|\sum_{j=1}^n\lambda_ja_i^jb_i^j\Big|=\sum_{j=1}^n\lambda_j.$$
Since $T$ is invertible,  we have $\lambda_j>0$ for each $j$. Therefore, for each $j$, the Cauchy--Schwarz inequality
$$\sum_{i=1}^na_i^jb_i^j\leq \sqrt{\sum_{i=1}^n(a_i^j)^2\sum_{i=1}^n(b_i^j)^2}=1$$
is indeed an equality. Therefore, $AB^T$ is a matrix whose diagonal entries are all $1$. Since $AB^T$ is also orthogonal, we obtain that $AB^T=I$, namely $A=B$. As
$$f_i=\sum_{k=1}^n b_k^i v_k,$$
and $\det(B^T)=\det(B)=\det(A)>0$, the basis $\{f_i\}_{1\leq i\leq n}$ is also oriented. Therefore, $T$ is orientation preserving.
\end{proof}

The proof of Theorem \ref{thm:Lipschitz} is indeed similar to that of Theorem \ref{mainthm: scalar_mean_curvature_comparsion}. We only sketch the proof here. We first need an extremality theorem for mean curvature with $[~\cdot~]_\tr$ lower bound.

\begin{proposition}\label{prop:weakLipschitz}
Suppose that $(M^n, \partial M, g)$ is a smooth,  compact Riemannian manifold with nonempty boundary $\partial M$ and nonnegative scalar curvature $\Sc_g \geq 0$ in $M$. If $ F \colon (\partial M,g|_{\partial M})\to(\sph^{n-1},g_{\sph^{n-1}})$ is a smooth map such that \begin{equation}\label{eq:delta_f}
		H_g\geq [\mr d F]_{\tr}+\delta \ \text{ on } \partial M
	\end{equation}
for some fixed positive constant $\delta > 0$ and $n=2, 3, 4$, then $\deg(F)=0$.
\end{proposition}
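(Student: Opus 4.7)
The plan is to mimic the dimension-reduction argument in Proposition~\ref{prop:weak}, replacing the trace norm $\|\cdot\|_\tr$ throughout by the oriented trace function $[\cdot]_\tr$ and using its sub-additivity (Lemma~\ref{lemma:orientedTrace}) in place of the triangle inequality for norms. The proof will be by induction on $n$. The base case $n=2$ is handled by Gauss--Bonnet exactly as in Proposition~\ref{prop:weak}, using the signed formula $\int_{\partial M} [\mr dF]_\tr\,\mr d\mathcal{H}_g^1 = 2\pi\deg(F)$ for a $1$-manifold, together with an orientation convention in the iteration that yields $\deg(F)>0$ at the base case.

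For the inductive step $n\geq 3$, Lemma~\ref{lemma: perturbation} carries over (its collapsing map is orientation-preserving on the relevant region). Lemma~\ref{lemma: existence_mu_bubble} then produces a smooth, stable capillary $\mu$-bubble $(Y^{n-1},Z,g_Y)$ with map $F_{n-2}=P_{n-2}\circ F_{n-1}|_Z\colon Z\to\sph^{n-2}$ of non-zero degree, and the stability estimate with the Schoen--Yau trick yields \eqref{eq:>0}. Solving the Neumann eigenvalue problem \eqref{eq:Lboundary} gives a positive first eigenvalue $\kappa>0$ with positive eigenfunction $f\in C^\infty(Y)$, and the conformal change $g_f=f^{4/(m-2)}g_Y$ (with $m=n-1$) yields $\Sc_{g_f}>0$ on $Y$ as in \eqref{eq:Sc>=0}. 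The key new input is the oriented sub-additivity estimate for the mean curvature of $Z$ in $(Y,g_f)$: choosing orientations of $Z$ and of $H\coloneqq \nabla\Psi^\perp\subset T_{F(z)}\sph^{n-1}$ so that adjoining $\n$ (resp.\ $\nabla\Psi$) produces oriented orthonormal bases of $T_z\partial M$ (resp.\ of $T_{F(z)}\sph^{n-1}$), Definition~\ref{def:orientedTrace} gives, for any such oriented orthonormal bases $\{u_i\},\{w_i\}$,
\[[\mr dF_{n-1}]_{\tr,g_S}\geq \sum_{i=1}^{n-2}\langle \mr dF_{n-1}(u_i),w_i\rangle+\langle \mr dF_{n-1}(\n),\nabla\Psi\rangle,\]
and taking suprema yields
\[[\mr dF_{n-1}]_{\tr,g_S}\geq [\mathbb{P}_{n-2}\circ \mr d(F_{n-1}|_Z)]_{\tr,g_Z}+\langle \mr dF_{n-1}(\n),\nabla\Psi\rangle,\]
where $\mathbb{P}_{n-2}$ is the orthogonal projection onto $H$. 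Combining this with $H_S\geq [\mr dF_{n-1}]_\tr+\delta$, the first-variation identity $J=\Psi\circ F$, the relation $\mr dF_{n-2}=\tfrac{1}{\sin J}\mathbb{P}_{n-2}\circ \mr d(F_{n-1}|_Z)$, the conformal scaling $[\mr dF_{n-2}]_{\tr,g_f}=f^{-2/(m-2)}[\mr dF_{n-2}]_{\tr,g_Z}$, and the identity $H_{Z,g_f}=\tfrac{1}{\sin J}f^{-2/(m-2)}(H_S-\langle \mr dF_{n-1}(\n),\nabla\Psi\rangle)$ derived from \eqref{eq:mean_f}, one obtains
\[H_{Z,g_f}\geq [\mr dF_{n-2}]_{\tr,g_f}+\widetilde\delta,\qquad \widetilde\delta=\delta\cdot \inf_Z\frac{f^{-2/(m-2)}}{\sin J}>0.\]
Thus $(Y^{n-1},Z,g_f,F_{n-2})$ satisfies the hypotheses of the proposition in dimension $n-1$, forcing $\deg(F_{n-2})=0$ by induction and contradicting Lemma~\ref{lemma: existence_mu_bubble}(3).

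The main technical obstacle is the orientation bookkeeping for the sub-additivity step: the inequality requires compatible orientation conventions on $Z$, on the $\Psi$-level sphere, and on the ambient spaces, so that the extensions by $\n$ and $\nabla\Psi$ are both orientation-preserving. Furthermore, because $[\cdot]_\tr$ is not invariant under orientation reversal (unlike $\|\cdot\|_\tr$), the orientations must be tracked through each iteration to ensure the reduced map retains a positive degree by the time the base case is reached; once this is in place, the $n=2$ Gauss--Bonnet argument closes the induction.
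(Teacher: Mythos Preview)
Your proposal is correct and follows essentially the same route as the paper's proof: assume $\deg(F)>0$ (passing to a double cover if necessary), handle $n=2$ by Gauss--Bonnet with the signed identity $\int_{\partial M}[\mr dF]_\tr\,\mr d\mathcal H^1_g=2\pi\deg(F)$, and for the inductive step run the capillary $\mu$-bubble/conformal argument of Proposition~\ref{prop:weak} with the single modification that the key splitting inequality is proved by extending oriented orthonormal bases of $T_zZ$ and $T_{F_{n-2}(z)}\sph^{n-2}$ by $\n$ and $\nabla\Psi$ to oriented bases of $T_z(\partial M)$ and $T_{F_{n-1}(z)}\sph^{n-1}$, exactly as you describe. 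The paper records the resulting estimate in the equivalent form $H_S-\delta\geq [\mr dF_{n-1}]_{\tr,g}\geq \partial J/\partial\n+\sin(J)\,f^{2/(m-2)}[\mr dF_{n-2}]_{\tr,g_f}$ and then concludes as you do; your orientation bookkeeping concern is what the paper addresses by fixing $\deg(F)>0$ at the outset and noting $\deg(F_{n-2})=\deg(F_{n-1})$ at each step. One small caveat (present also in the paper's sketch): the conformal step uses $m=n-1\geq 3$, so for $n=3$ you must instead apply the $\varphi\equiv 1$/Gauss--Bonnet argument of Case~I in Proposition~\ref{prop:weak} directly, with the same oriented splitting inequality supplying $B\leq H_Z-[\mr dF_1]_\tr-\delta/\sin J$.
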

\begin{proof}
We always assume that $M$ is oriented and $\deg(F)>0$. Otherwise, we consider the double cover of $M$.

When $n=2$, the proposition also follows from the Gauss--Bonnet formula. On $M$, we have
\begin{equation*}
   \frac{1}{2} \int_M \Sc_g\,\mr d\mathcal{H}_g^2 + \int_{\partial M } k_{g}\,\mr d\mathcal{H}_g^1 =  2\pi\chi(M) \leq 2\pi,
\end{equation*}
where the geodesic curvature $k_{g}$ is equal to the mean curvature $H_{g}$. By definition, 
$$[F]_\tr=\frac{\mr d(F^*\theta)}{\mr ds},$$
where $\theta$ and $s$ are the arc length parameters of $\sph^1$ and $\partial M$, respectively. By our assumption and $\deg(F) >0$, we obtain that
\[\int_M \Sc_g\,\mr d\mathcal{H}_g^2 + 2\int_{\partial M } k_{\partial M } \,\mr d\mathcal{H}_g^1 \geq {4\pi\cdot \deg(F) + \delta \cdot \mathcal{H}_g^1(\partial M )}.\]
Hence, we reach that $\mathcal{H}_g^1(\partial M ) = 0$, which is a contradiction.

The general case is proved by induction. Assume the conclusion holds for $n-1$. We shall use the same process as in the proof of Proposition \ref{prop:weak} and obtain the smooth submanifold
$$(Y^{n-1},Z^{n-2}=\partial Y,g_f),$$
of $(M^n,S^{n-1}=\partial M,g),$ where $g_f=f^{\frac{4}{m-2}} g$ and $f$ is given in line \eqref{eq:Lboundary}. We have $\Sc_{g_f}\geq 0$ as in line \eqref{eq:Sc>=0}, and the mean curvature given by
\begin{align*}
    H_{Z, g_{f}} =  f^{-\frac{2}{m-2}}\left(H_{Z, g}  + \frac{m-1}{2(m-2)}\frac{1}{ f}\frac{\partial f}{\partial \nu_Z}\right)
    ={f^{-\frac{2}{m-2}}} \left(\frac{H_S}{\sin(J)}  - \frac{1}{\sin(J)} \frac{\partial J}{\partial\n}\right)
\end{align*}
as in line \eqref{eq:H>}, where $\n$ is the upper unit normal vector of $Z$ in $S$.

We define $F_{n-2}= P_{n-2} \circ F_{n-1}$, where $P_{n-2}$ is the projection from $\sph^{n-1}$ to the equator. Let $\nabla J$ be the gradient of $J$, which is the unit vector field on $\sph^{n-1}$ along the geodesics from the south pole to the north pole. For any point $z\in Z$, let $\{u_i\}_{1\leq i\leq n-2}$ be an oriented orthonormal basis of $T_z Z$ with respect to $g_f$, and $\{v_i\}_{1\leq i\leq n-2}$ an oriented orthonormal basis of $T_{F_{n-2}(z)}\sph^{n-2}$. Then
$$\Big\{\normal,f^{\frac{2}{m-2}}u_1,\cdots,f^{\frac{2}{m-2}}u_{n-2}\Big\}$$
is an oriented orthonormal basis of $T_zS$, and
$$\Big\{\nabla J,\frac{1}{\sin(J)}(\mr dP_{n-2})^{-1}v_1,\cdots,\frac{1}{\sin(J)}(\mr dP_{n-2})^{-1}v_{n-2}\Big\}$$
is an oriented orthonormal basis of $T_{F_{n-1}(z)}\sph^{n-1}$. 

Therefore, by Definition \ref{def:orientedTrace}, we have
\begin{align*}
    H_S-\delta\geq [\mr dF_{n-1}]_{\tr,g}=&\langle \mr dF_{n-1}(\n),\nabla J\rangle+\sum_{i=1}^{n-2}\langle \mr dF_{n-1} (f^{\frac{2}{m-2}}u_i),\frac{1}{\sin(J)}(\mr dP_{n-2})^{-1} (v_i)\rangle\\
    =&\frac{\partial F_{n-1}^*J}{\partial \n}+\sum_{i=1}^{n-2}\sin(J)f^{\frac{2}{m-2}}\langle (\mr dP_{n-2}\circ \mr dF_{n-1}) (u_i), v_i\rangle
\end{align*}
Since $\{u_i\}$ and $\{v_i\}$ are arbitrary, we obtain that
$$H_S-\delta\geq \frac{\partial J}{\partial \n}+\sin(J)f^{\frac{2}{m-2}}[\mr dF_{n-2}]_{\tr,g_f}.$$
Therefore, we have
$$H_{Z,g_f}\geq [\mr dF_{n-2}]_{\tr,g_f}+\delta\cdot f^{-\frac{2}{m-2}}\frac{1}{\sin(J)}.$$
Since $F_{n-1}(\partial Y)$ stays away from the poles and $f$ is strictly positive on $Y$, we get that 
$$\widetilde\delta=\delta\cdot \inf_{Z}\frac{1}{\sin(J)}f^{-\frac{2}{m-2}}>0.$$
Consequently, we obtain a smooth compact Riemannian manifold $(Y^{n-1}, \partial Y, g_f)$  of dimension $(n-1)$ with

\begin{enumerate}

    \item Nonnegative scalar curvature:  $$ \Sc_{g_{f}} \geq 0 \text{ in }  Y.$$
    
    \item \label{item: mean_curvature_lower_bound_2} Mean curvature lower bound: there exists a smooth map $$F_{n-2}: (\partial Y, g_{f}|_{\partial M}) \rightarrow (\sph^{n-2}, g_{\sph{n-2}})$$ such that
    \[H_{\partial Y, g_f} \geq [\mr dF_{n-2}]_{\tr_{g_f}} + \Tilde{\delta}\]
    for some positive constant $\Tilde{\delta} > 0$ and $\deg(F_{n-2}) =\deg(F_{n-1})$.
\end{enumerate}
This finishes the proof by the induction hypothesis.
\end{proof}

\vspace{2mm}
Now we are ready to prove Theorem \ref{thm:Lipschitz2}.
\begin{proof}[Proof of Theorem \ref{thm:Lipschitz2}]
    The statement clearly holds for $n=2$. We consider $n\geq 3$.

    \textbf{Claim C:} Under the assumption of Theorem \ref{thm:Lipschitz}, we have
 \begin{equation}\label{eq:equalLipschitz}
     \Sc_g=0\text{ on }M; H_{g}=[\mr dF]_\tr=\|\mr dF\|_\tr=n-1\text{ on }\partial M.
 \end{equation}   

 Let us argue by contradiction. Suppose that at least one of these equalities fails at some point in $M$. Similar as the proof of Theorem \ref{mainthm: scalar_mean_curvature_comparsion}, the lowest eigenvalue $\lambda$ of the Neumann boundary problem is positive:
 	\begin{equation} 
  \left\{
		\begin{aligned}
			& -\Delta \varphi +\frac{n-1}{4(n-2)} \Sc_g\varphi = \lambda \varphi,\\
			&\frac{\partial \varphi}{\partial \nu}=-\frac{n-2}{2(n-1)}(H-[\mr dF]_{\tr})\varphi.
		\end{aligned}\right.
	\end{equation}
 Here $[\mr dF]_{\tr}$ is only an $L^\infty$-function on $\partial M$. Therefore, there exists a smooth map $F'\colon \partial M\to\sph^n$ with
 $$\begin{cases}
     \displaystyle\sup_{x\in\partial M}d(F(x),F'(x))<\varepsilon,\\
     \|\mr dF-\mr dF'\|_{L^p(\partial M)}<\varepsilon,
 \end{cases}$$
 for some small $\varepsilon>0$ and large $p$, such that the lowest eigenvalue $\lambda'$ of the Neumann boundary problem is positive:
 	\begin{equation} 
  \left\{
		\begin{aligned}
			& -\Delta \varphi +\frac{n-1}{4(n-2)} \Sc_g\varphi = \lambda' \varphi,\\
			&\frac{\partial \varphi}{\partial \nu}=-\frac{n-2}{2(n-1)}(H-[\mr dF']_{\tr})\varphi.
		\end{aligned}\right.
	\end{equation}

 Therefore, as computed in the proof of Theorem \ref{mainthm: scalar_mean_curvature_comparsion}, we obtain a new metric on $M$ that satisfies the conditions in Proposition \ref{prop:weakLipschitz}. This leads to a contradiction and proves \textbf{Claim C}.

 Therefore, all the equality in line \eqref{eq:equalLipschitz} holds. In particular, by Lemma \ref{lemma:orientedTrace}, $\mr dF$ is almost everywhere an orientation preserving isometry. By \cite{cecchini2022lipschitz}*{Theorem 2.4} and the Myers--Steenrod Theorem \cite{Myers_Steenrod}, $F$ is a smooth isometry. It follows that $(M,g)$ is a Euclidean flat disk.
\end{proof}

\appendix

\section{Capillary mu-bubble and its variation} \label{sec: capillarysurface}

In this section, we will first set up the capillary $\mu$-bubble problem in a general context, and then we will present the basic calculations for the first and second variations of the capillary $\mu$-bubble. Our primary focus is to deal with the boundary quantities since the other calculations are quite standard in the standard textbook. This section is a refined version of the calculations from \cites{Gromov_four_lectures,Ambrozio_Rigidity, Li_polyhedron_three}, see \cites{ZZ-cmc,ZZ-pmc, chodosh2024_improvedregularity} for the further studies of the capillary $\mu$-bubble.


Suppose that $(M^n, \partial M, g)$ is a complete Riemannian manifold with nonempty boundary $S = \partial M$. Let $\Omega$ be a domain with boundaries, we write $ (Y^{n-1}, \partial Y) =\partial \Omega \cap \mathring{M} $,
$Z = \partial Y \subset S =\partial M$ and $ \nu_Y$ be the upward (outer) unit normal vector field of $Y$ in $M$. Now we define
\begin{equation}
    \mu_c = \mu(x)\,\mr d\mathcal{H}_g^n(x) + \mu_{\partial}(x)\,\mr  d\mathcal{H}_g^{n-1}(x).
\end{equation}

Moreover, we define the capillary $\mu$-bubble functional as follows.

\begin{definition} We introduce the capillary $\mu$-bubble as follows.
    $$\mathcal{A}_c(\Omega) = \mathcal{H}_g^{n-1}(Y) - \left( \int_{\Omega} \mu(x) \,\mr d\mathcal{H}_g^n(x) + \int_{\partial \Omega^* \cap S}\mu_{\partial}(x)\,\mr d\mathcal{H}_g^{n-1}(x) \right).$$
for any $\Omega$ in $\mathcal{C}$. Here 
\[\mathcal{C} = \left\{\text{Caccioppoli sets } \Omega \subset X \text{ {with certain given topological properties}} \right\}.\]
\begin{itemize}
    \item A domain $\Omega \subset M$ is said to be $\mc A_c$ stationary if it is a critical point of $\mathcal{A}_c$ {among the class $\mathcal{C}$}.
    \item A domain $\Omega \subset M$ is said to be a stable $\mu$-bubble if $\Omega$ is a minimizer of $\mathcal{A}_c$ among the class $\mathcal{C}$. 
\end{itemize}
\end{definition}

\vspace{2mm}
Our next goal is to calculate the variation of the capillary $\mu$-bubble and then study the curvature relations on the boundary.

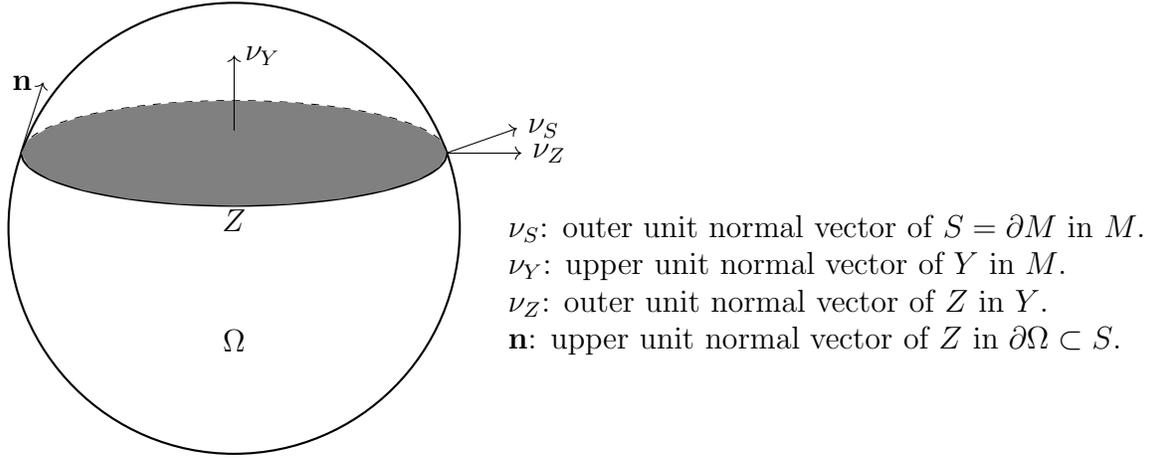
\begin{figure}[h]\label{fig:mu-bubble}
	\begin{tikzpicture}
		\draw[thick] (3,0) arc(0:360:3);
		\draw[dashed,domain=0:180,smooth] plot({2.83*cos(\x)},{0.7*sin(\x)+1});
		\draw[thick,domain=180:360,smooth] plot({2.83*cos(\x)},{0.7*sin(\x)+1});
		\node at (-1,1)[fill=white]{$Y = \partial \Omega \cap \mathring{M}$};
		\fill[gray,opacity=0.2] plot[domain=0:360] ({2.83*cos(\x)},{0.7*sin(\x)+1});
		\node at (0,0.1){$Z$};
		\draw[->] (0,1.3) -- (0,2.3) node[right] {$\nu_Y$};
		\draw[->] (2.82,1) -- (3.76,1.33) node[right] {$\nu_S$};
		\draw[->] (2.82,1) -- (3.82,1) node[right] {$\nu_Z$};
		\draw[->] (-2.82-0.02,1) -- (-2.49-0.05,1.94) node[left] {$\textbf{n}$};
		\draw (4-.5,2-2) node[right] {$\nu_S$: outer unit normal vector of $S=\partial M$ in $M$.};
		\draw (4-.5,1.5-2) node[right] {$\nu_Y$: upper unit normal vector of $Y$ in $M$.};
		\draw (4-.5,1-2) node[right] {$\nu_Z$: outer unit normal vector of $Z$ in $Y$.};
		\draw (4-.5,0.5-2) node[right] {$\textbf{n}$: upper unit normal vector of $Z$ in $\partial\Omega\subset S$.};
        \node at (0,-1.5){$\Omega$};
	\end{tikzpicture}
 \caption{Capillary $\mu$-bubble setup} \label{setup}
\end{figure}

Note that the variation of the domain $\Omega \in \mathcal{C}$ is equivalent to the variation of its boundary $Y = \partial^*\Omega $.  Hence, we mainly focus on boundary $(Y, \partial Y)$. Suppose that $(Y, \partial Y)$ is a smooth hypersurface in $M$  and $(Y_t, \partial Y_t)$ is a family of hypersurfaces in $M$ such that $\partial Y_t \subset S = \partial M$ and $(Y_0, \partial Y_0) = (Y, \partial Y)$ for $t \in (-\varepsilon, \varepsilon), \varepsilon > 0$. Here, we denote by 

\begin{itemize}
    \item $\nu_{Y_t}$ the unit, upper normal vector field of $Y_t$ in $M$,

    \item $\nu_{Z_t}$  the unit, outer normal vector field of $Z_t$ in $Y_t$,

    \item $\nu_S$ the unit, outer normal vector field $S$ in $M$,

    \item $\n_t$ the unit, upper normal vector field of $Z_t$ in $S$.
\end{itemize}

Moreover, we define $J_t(z)$ by the contact angle between $Y_t$ and $S$ at the intersection point $z \in Z_t$ = $\partial Y_t$, then
\begin{equation} \label{eq: angle_formula}
    \cos(J_t(z)) = -\nu_{Y_t}(z) \cdot \nu_{S}(z) = \nu_{Z_t} \cdot \n_t.
\end{equation}
Note that $\nu_{Z_t}, \nu_{S}, \nu_{Y_t}$ can be viewed as the unit, normal vector fields of $Z_t$ in $M$ and then they are in the same plane. Hence, for any $z \in Z$, we obtain
\begin{equation} \label{eq: angle_relations}
    \nu_{S}(z) = -\cos(J_t(z)) \cdot \nu_{Y_t}(z) + \sin(J_t(z))\cdot \nu_{Z_t}(z).
\end{equation}

Next we consider the admissible deformation of $Y$: $f: Y \times (-\epsilon, \epsilon) \rightarrow M$ such that $f_t: Y \rightarrow M$ defined by $f_t(y) = f(y,t)$ is an embedding in $M$ with 
$$f_t(\mathring{Y}) \subset \mathring{M}, \ f_t(\partial Y) \subset  S,\ f_0(y) = y \text{ for any } y \in Y.$$

Now we define the variational vector field $\partial_t(y) =: \frac{\partial f}{\partial t}(y, t), t \in (-\epsilon, \epsilon)$. Note that $Y|_{Z_t} \in TS$ and denote $$\varphi(y,t) = g(\partial_t,\nu_{Y_t}) \text{ for any }y \in {Y}  .$$
Moreover, on the boundary $z \in Z_t$, we obtain that
\begin{equation}
    \partial_t (z)=  \partial^Z_t(z) + \frac{\varphi(z,t)}{\sin(J(z,t))}\cdot \n(z,t). 
\end{equation}
Here,  $\partial^Z_t(z)$ is the tangential part of $\partial_t(z)$ onto $Z_t$ and $\n$ is the unit upward normal vector field of $Z$ in $S$.

Hence, we reach that
\begin{lemma} With the notation above, we obtain
   \[ \mathcal{A}^\prime_c(t) = \int_{Y_t} \left(H_{Y_t} - \mu \right) \cdot  \varphi \,\mr d\mathcal{H}_g^{n-1}  + \int_{Z_t} \left(\frac{{\cos(J_t) - \mu_{\partial}}}{\sin(J_t)}\right)\cdot  \varphi \,\mr d\mathcal{H}^{n-2}_g.  \]
   Here, $H_{Y_t}$ is the mean curvature of $Y_t$ with respect to $\nu_{Y_t}$ and $J_t$ is the contact angle $Y_t$ and $S$ at the intersection points. As a result, $Y$ is a stationary hypersurface of $\mathcal{A}_c$ if and only if
   \begin{equation}
   H_Y(y) = \mu(y) \text{ in } Y; \ 
       \cos(J(z)) = \mu_{\partial}(z) \text{ on } Z
   \end{equation}

   \begin{proof}

       By a basic calculation(see \cite{Ambrozio_Rigidity}*{Appendix}), we obtain that
       \begin{align*}
          \frac{d}{dt}\mathcal{H}_g^{n-1}(Y_t) &= \int_Y H_{Y_t} \cdot \varphi \,\mr d\mathcal{H}_g^{n-1} + \int_{Z_t}g(\nu_{Z_t}, \partial_t)\,\mr  d\mathcal{H}_g^{n-2}\\
          &=  \int_Y H_{Y_t}\cdot \varphi \,\mr  d\mathcal{H}_g^{n-1} + \int_{Z_t}g(\nu_{Z_t}, \partial_t^{Z_t} + \frac{\varphi}{\sin(J_t)}\n_t) \,\mr d\mathcal{H}_g^{n-2}\\
          &=  \int_Y H_{Y_t}\cdot \varphi \,\mr d\mathcal{H}_g^{n-1} + \int_{Z_t}g(\nu_{Z_t},  \frac{\varphi}{\sin(J_t)}\n_t) \,\mr  d\mathcal{H}_g^{n-2}\\
           & =  \int_{Y_t} H_{Y_t} \cdot \varphi \,\mr d\mathcal{H}_g^{n-1} + \int_{Z_t} \frac{\cos(J_t)}{\sin(J_t)} \cdot \varphi \,\mr d\mathcal{H}_g^{n-2}
       \end{align*}
 Moreover, a direct calculation implies that
       \begin{equation*}
           \frac{d}{dt}\int_{\Omega_t} \mu \,\mr d\mathcal{H}_g^{n-1} = \int_{Y_t} \mu \cdot \varphi\,\mr d\mathcal{H}_g^{n-1}.
       \end{equation*}
        \begin{equation*}
           \frac{d}{dt}\int_{\partial \Omega_t \cap S} \mu_{\partial}(z) \,\mr d\mathcal{H}_g^{n-1} = \int_{Z_t} \frac{\mu_\partial}{\sin(J_t)}\cdot \varphi \,\mr  d\mathcal{H}_g^{n-1}.
       \end{equation*}
    Hence, we obtain
     \[ \mathcal{A}^\prime_c(t) = \int_{Y} \left(H_{Y_t} - \mu\right) \cdot  \varphi \,\mr d\mathcal{H}_g^{n-1}  + \int_{Z} \frac{\cos(J_t) - \mu_{\partial}}{\sin(J_t)} \cdot \varphi\,\mr d\mathcal{H}^{n-2}_g. \]
     Therefore, $\Omega$ is a stationary capillary $\mu$-bubble of $\mathcal{A}_c$ if and only if
   \begin{equation*}
   H_Y(y) = \mu(y) \text{ in } Y; \ 
       \cos(J(z)) = \mu_{\partial}(z) \text{ on } Z.
   \end{equation*}
   \end{proof}
\end{lemma}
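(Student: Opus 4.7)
The plan is to differentiate the three pieces of $\mathcal{A}_c(\Omega_t) = \mathcal{H}_g^{n-1}(Y_t) - \int_{\Omega_t}\mu\,\mr d\mathcal{H}_g^n - \int_{\partial\Omega_t \cap S}\mu_\partial\,\mr d\mathcal{H}_g^{n-1}$ separately in $t$, and then to assemble the result using the decomposition of the variation vector field $\partial_t$ at the contact curve $Z_t$ that was established just before the statement.

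First I would apply the classical first variation of area for a hypersurface-with-boundary, namely
$$\frac{d}{dt}\mathcal{H}_g^{n-1}(Y_t)=\int_{Y_t} H_{Y_t}\,\varphi\,\mr d\mathcal{H}_g^{n-1}+\int_{Z_t} g(\nu_{Z_t},\partial_t)\,\mr d\mathcal{H}_g^{n-2},$$
where $\varphi = g(\partial_t,\nu_{Y_t})$ is the normal speed of $Y_t$. The boundary flux $g(\nu_{Z_t},\partial_t)$ then has to be rewritten in terms of the contact angle and $\varphi$ only. Using the decomposition $\partial_t|_{Z_t}=\partial_t^{Z_t}+(\varphi/\sin J_t)\,\n_t$, and observing that $\partial_t^{Z_t}$ is tangent to $Z_t$ while $\nu_{Z_t}$ is orthogonal to $Z_t$ inside $Y_t$, the tangential piece drops out; combining this with the angle identity $g(\nu_{Z_t},\n_t)=\cos J_t$ (which is how $J_t$ was defined) yields $g(\nu_{Z_t},\partial_t)=(\cos J_t/\sin J_t)\,\varphi$.

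Next I would dispatch the remaining two terms. The bulk integral differentiates by the divergence theorem (or directly, since $\Omega_t$ sweeps with normal speed $\varphi$ across $Y_t$), producing $\int_{Y_t}\mu\,\varphi\,\mr d\mathcal{H}_g^{n-1}$. For the wall integral, note that $\partial\Omega_t\cap S$ is a region in the hypersurface $S$ whose moving boundary is $Z_t$ with outer conormal $\n_t$ in $S$; the decomposition above gives the speed of $Z_t$ in the direction $\n_t$ as exactly $\varphi/\sin J_t$, so
$$\frac{d}{dt}\int_{\partial\Omega_t\cap S}\mu_\partial\,\mr d\mathcal{H}_g^{n-1} = \int_{Z_t}\frac{\mu_\partial}{\sin J_t}\,\varphi\,\mr d\mathcal{H}_g^{n-2}.$$
Summing and grouping interior against boundary contributions yields the stated formula. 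For the stationarity characterization, I would first test against $\varphi\in C_c^\infty(\mathring{Y})$ and apply the fundamental lemma of the calculus of variations to force $H_Y=\mu$ pointwise; the interior term then vanishes identically, and testing against arbitrary $\varphi$ not vanishing on $Z$ forces $\cos J = \mu_\partial$ there.

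The hard part will be Step 2: keeping the four unit vector fields $(\nu_Y,\nu_Z,\nu_S,\n)$ straight along the contact curve and using the correct angle identities. The relation $\nu_S = -\cos(J)\nu_Y + \sin(J)\nu_Z$ together with $\cos J = \nu_Z\cdot\n$ are both in play, and a sign error---particularly in deciding whether $\varphi/\sin J_t$ multiplies $\n_t$ or $-\n_t$, or whether the tangential piece $\partial_t^{Z_t}$ really lies inside $TZ_t$ rather than having a component along $\nu_{Z_t}$---would corrupt the sign of the contact-angle term. Establishing cleanly that $\partial_t^{Z_t}$ contributes nothing to the flux $g(\nu_{Z_t},\partial_t)$ is the conceptual crux that isolates the capillary contribution from the usual minimal-surface one.
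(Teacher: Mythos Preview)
Your proposal is correct and follows essentially the same approach as the paper's proof: both differentiate the three pieces of $\mathcal{A}_c$ separately, invoke the standard first-variation-of-area formula with boundary, use the decomposition $\partial_t|_{Z_t}=\partial_t^{Z_t}+(\varphi/\sin J_t)\,\n_t$ together with $g(\nu_{Z_t},\n_t)=\cos J_t$ to simplify the boundary flux, and then assemble. Your treatment of the stationarity conclusion (test first with $\varphi\in C_c^\infty(\mathring Y)$, then with general $\varphi$) is slightly more explicit than the paper's, but the argument is the same.
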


\begin{lemma}\label{lemma: second_variation_formula}
   With the notations above, if $\Omega$ is  a stationary capillary $\mu$-bubble of $\mathcal{A}_c$, then
        \begin{align*}
       \mathcal{A}^{\prime\prime}(0) =&\int_Y |\nabla \varphi|^2  - \left(\Ric_g(\nu_Y, \nu_Y) + \|A_Y\|^2 + \partial_{\nu_Y} \mu \right) \cdot \varphi^2 \,\mr d\mathcal{H}_g^{n-1}  \\
       + & \int_Z \left(  H_Z - \frac{H_S}{\sin(J)} -  \cot(J)H_Y - \frac{1}{\sin^2(J)}\frac{\partial \mu_\partial}{\partial \n} \right)\cdot \varphi^2   + 2(\nabla_{\partial_t^Z}J) \cdot \varphi \,\mr d\mathcal{H}_g^{n-2}.
   \end{align*}
Here, $H_Z$ is the mean curvature $Z$ in $Y$ with respect to $\nu_Z$, $H_S$ is the mean curvature of $S$ in $M$ with respect to $\nu_S$, and $H_Y$ is the mean curvature of $Y$ in $M$ with respect to $\nu_Y$. In particular, if  $\partial_t^Z = 0$, we obtain,
\begin{align} \label{eq: second_variation_proper}
       \mathcal{A}^{\prime\prime}(0) =&\int_Y |\nabla \varphi|^2  - \left(\Ric_g(\nu_Y, \nu_Y) + \|A_Y\|^2 + \partial_{\nu_Y} \mu \right) \cdot \varphi^2 \,\mr d\mathcal{H}_g^{n-1}  \\
       + & \int_Z \left(  H_Z - \frac{H_S}{\sin(J)} -  \cot(J)H_Y - \frac{1}{\sin^2(J)}\frac{\partial \mu_\partial}{\partial \n} \right)\cdot \varphi^2  \,\mr d\mathcal{H}_g^{n-2}.
   \end{align}
   \begin{proof}
  
By the classical variational formula(see \cite{Ambrozio_Rigidity}*{Appendix}), we obtain that
\begin{equation*}
    \frac{\partial H_{Y_t}}{\partial t}= -\Delta_{Y_t} \varphi - \left(\|A\|^2 + \Ric_g(\nu_{Y_t}, \nu_{Y_t})\right)\varphi\ + \nabla^g_{\partial^{Y_t}_t} H_t.
\end{equation*}
Here, $\nabla^g$ is the Levi-Civita connection induced by the Riemannian metric $g$ on $M$.

Let us work on $Z_t$ and then view  $\nu_{Z_t}, \nu_{S}, \nu_{Y_t}$ as the unit normal vector field of $Z_t$ in $X$. Note that the angle decomposition in (\ref{eq: angle_relations})
\begin{equation*} 
    \nu_{S}(z) = -\cos(J_t(z)) \cdot \nu_{Y_t}(z) + \sin(J_t(z))\cdot \nu_{Z_t}(z),
\end{equation*} we obtain
       \begin{equation}\label{eq: second_fundamental_form_relations}
           \tr_{g_{Z_t}}(A_{\nu_{S}}) = -\cos(J_t(z))\cdot \tr_{g_{Z_t}}(A_{\nu_{Y_t}}) + \sin(J_t(z))\cdot \tr_{g_{Z_t}}(A_{\nu_{Z_t}}).
       \end{equation}
       Here $\tr_{g_{Z_t}}( \cdot )$ stands for taking the trace on $Z_t$ with respect to the metric $g_{Z_{t}}$ and $A_\nu$ stands for the second fundamental from $Z$ with respect to $\nu$ in $M$ for any unit normal vector field $\nu$ of $Z$. Then, by taking trace, line (\ref{eq: second_fundamental_form_relations}) implies that
       \begin{equation} \label{eq: mean_curvature}
          \sin(J_t(z))\cdot  H_{Z_t} = 
           \tr_{g_{Z_t}}(A_{\nu_{S}}) +\cos(J_t(z))\cdot \tr_{g_{Z_t}}(A_{\nu_{Y_t}}).
       \end{equation}
       
       Moreover, let us further work on $Y_t$ (resp. $S$) in $M$ (resp. $M$), by the definition of second fundamental form, we reach,
       \begin{itemize}
           \item  Let us consider the second term on the right in line (\ref{eq: mean_curvature})
           \begin{align*}
           H_{Y_t} = \tr_{g_{Y_t}}(A_{\nu_{Y_t}})&=  \tr_{g_{Z_t}}(A_{\nu_{Y_t}}) + g(\nabla_{\nu_{Z_t}}\nu_{Y_t},\nu_{Z_t}).
       \end{align*}
Hence,
\begin{align*}
    \cos(J_t(z))\cdot \tr_{g_{Z_t}}(A_{\nu_{Y_t}}) &= \cos(J_t(z))H_{Y_t}-\cos(J_t(z))g(\nabla_{\nu_{Z_t}}\nu_{Y_t},\nu_{Z_t})\\
    &=  \cos(J_t(z))H_{Y_t}-\cos(J_t(z))A_{\nu_{Y_t}}(\nu_{Z_t}, \nu_{Z_t})
\end{align*}

\item Let us consider the first term on the left in line (\ref{eq: mean_curvature})
\begin{align*}
    \tr_{g_{Z_t}}(A_{\nu_{S}}) &=\tr_{g_{S}}(A_{\nu_{S}})  - g(\nabla_{\n_t} \nu_S, \n_t)\\
    &= H_S  - g(\nabla_{\n_t} \nu_S, \n_t)\\
    &= H_S - A_{\nu_S}(\n_t, \n_t).
\end{align*}

\end{itemize}
Hence, the calculations above imply that
\begin{align*}\label{eq: mean_curvature_relations}
    \sin(J_t)\cdot H_{Z_t} - H_S - \cos(J_t)\cdot H_{Y_t}  =-A_{\nu_S}(\n, \n) -\cos(J_t) \cdot A_{\nu_{Y_t}}({\nu_{Z}},\nu_{Z})
\end{align*}

\vspace{2mm}
Next, let us calculate $\frac{d}{dt}\cos(J_t)|_{t = 0}$ as follows. By the angle expression (\ref{eq: angle_formula}) and (\ref{eq: angle_relations}), we obtain
           \begin{align*}
          \frac{d}{dt} \cos(J_t(z))&=-{\partial_t} (g( \nu_{Y_t} , \nu_{S}))\\
          &=- g(\nabla_{\partial_t} \nu_{Y_t},  \nu_S)  -g(\nu_{Y_t}, \nabla_{\partial_t} \nu_S)\\
          &= - g(\nabla_{\partial_t^Y} \nu_{Y_t},  \nu_S) + g(\nabla^Y{ \varphi},{ \nu_S}) -g(\nu_{Y_t},\nabla_{\partial_t} \nu_S).
       \end{align*}
Here, $\partial_t^{Y_t}$ is the tangential part of $\partial_t$ onto the tangent plane  $TY_t$ of $Y_t$. 

\begin{itemize}
    \item Note that
    \[\nu_S = -\cos(J_t) \nu_{Y_t} + \sin(J_t) \nu_{Z_t}(z),\]
    we have
    \[g(\nabla^{Y_t}{ \varphi},{ \nu_S}) = \sin(J_t) \cdot \frac{\partial \varphi}{\partial \nu_{Z_t}}.\]
    and
    \[g(\nabla_{\partial_t^Y} \nu_{Y_t}, \nu_S) = \sin(J_t(z)) \cdot g(\nabla_{\partial_t^Y} \nu_{Y_t}, \nu_{Z_t}).\]

    \item Note that $\partial_t^{Y_t} = \partial_t^{Z_t} + \varphi \cot(J_t)\cdot \nu_{Z_t}$ where $\partial_t^{Z_t}$ is the tangential part of $\partial_t$ onto $Z_t$, we obtain that
    \begin{align*}
       & g(\nabla_{\partial_t^{Y_t}} \nu_{Y_t}, \nu_S)\\
       =& \sin(J_t) \cdot g(\nabla_{\partial_t^{Y}}\nu_{Y_t}, \nu_{Z_t})\\
       =&
       \sin(J_t) \cdot  g(\nabla_{\partial_t^{Z_t}} \nu_{Y_t}, \nu_{Z_t}) + \cos(J_t) \cdot g(\nabla_{\nu_{Z_t}}\nu_{Y_t}, \nu_{Z_t})\\
       =&
       \sin(J_t)\cdot  {g(\nabla_{\partial_t^{Z_t}} \nu_{Y_t}, \nu_{Z_t})} + \cos(J_t) \cdot A_{\nu_Y}(\nu_Z, \nu_
       Z).
    \end{align*}
    

    \item Note that 
    \[\nu_{Y_t} = \cos(J_t) \cdot \nu_S + \sin(J_t)\cdot  \n_t, \ \ \nu_{Z_t} = -\cos(J_t)\cdot \n_t + \sin(J_t)\cdot \nu_S,\] 
    and
    \[\partial_t = \partial_t^{Z_t} + \frac{\varphi}{\sin(J_t)}\cdot  \n_t, \]
    we obtain
    \begin{itemize}
    \item \begin{align*}
    & g(\nu_{Y_t}, \nabla_{\partial_t} \nu_S)\\
    =& g(\cos(J_t)\cdot \nu_S + \sin(J_t)\cdot  \n_t, \nabla_{\partial_t^{Z_t} + \frac{\varphi}{\sin(J_t)} \cdot \n_t} \nu_S )\\
    =&\sin(J_t)\cdot g(\n_t, \nabla_{\partial_t^{ Z_t}} \nu_S) + g(\n_t, \nabla_{\n_t}\nu_S)\cdot  \varphi \\
    =&\sin(J_t)\cdot g(\n_t, \nabla_{\partial_t^{ Z_t}} \nu_S) + A_{\nu_S}(\n_t, \n_t)\cdot \varphi.
    \end{align*}
  \item       
\begin{align*}
    & g(\nabla_{\partial_t^{Z_t}} \nu_{Y_t}, \nu_{Z_t})\\
    =&g(\nabla_{\partial_t^{Z_t}}\left(\cos(J_t)\nu_S + \sin(J_t) \n_t\right), -\cos(J_t)\n_t + \sin(J_t)\nu_S)\\
    =& -\cos^2(J_t) \cdot g(\nabla_{\partial_t^{Z_t}} \nu_S, \n_t) + \sin^2(J_t)\cdot g(\nabla_{\partial_t^{Z_t}} \n_t, \nu_S) - \nabla_{\partial_t^{Z_t}} J_t(z)\\
    =& -g(\nabla_{\partial_t^{Z_t}} \nu_S, \n_t)  - \nabla_{\partial_t^{Z_t}} J_t(z).
\end{align*}
 \end{itemize}
  \end{itemize} 
 Hence, we reach
 \begin{align*}
        &\frac{d}{dt}\Big|_{t=0} \cos(J(z))\\
        =& \sin(J)\cdot H_{Z_t} - H_S - \cos(J) \cdot H_{Y}+ \sin(J) \cdot \frac{\partial \varphi}{\partial \nu_Z}  +\sin(J) \cdot  \nabla_{\partial_t^{Z_t}} J_t.
 \end{align*}
 
Moreover,
\begin{align*}
    &\frac{d}{dt}\Big|_{t=0}\int_{Z} \frac{\cos(J_t) - \mu_{\partial}}{\sin(J_t)} \cdot \varphi\,\mr d\mathcal{H}^{n-2}_g\\
    =&\int_Z \left(  H_Z - \frac{H_S}{\sin J} -  (\cot J) H_Y\right) \varphi^2 + \left(\frac{\partial \varphi}{\partial{\nu_Z}} + \nabla_{\partial_t^Z}{J_t(z)}- \frac{\nabla_{\partial_t} \mu_{\partial}}{\sin J}\right)  \varphi \,\mr d\mathcal{H}_g^{n-2}\\
    =&\int_Z \left(  H_Z - \frac{H_S}{\sin J} -  (\cot J) H_Y - \frac{1}{\sin^2J}\frac{\partial \mu_\partial}{\partial \n} \right)\varphi^2 + \left(\frac{\partial \varphi}{\partial{\nu_Z}} + 2\nabla_{\partial_t^Z}J\right)  \varphi \,\mr d\mathcal{H}_g^{n-2}.
\end{align*}

Note that
\[-\int_Y \varphi \Delta \varphi = \int_Y |\nabla \varphi|^2 - \int_Z\frac{\partial \varphi}{\partial \nu_Z} \varphi,\] we obtain
      \begin{align*}
       \mathcal{A}^{\prime\prime}(0) =&\int_Y |\nabla \varphi|^2  - \left(\Ric_g(\nu_Y, \nu_Y) + \|A_Y\|^2 + \partial_{\nu_Y} \mu \right)\cdot \varphi^2 d\mathcal{H}_g^{n-1}  \\
       + & \int_Z \left(  H_Z - \frac{H_S}{\sin J} -  (\cot J)H_Y - \frac{1}{\sin^2 J}\frac{\partial \mu_\partial}{\partial \n} \right)\varphi^2   + 2(\nabla_{\partial_t^Z}J)\cdot  \varphi \,\mr d\mathcal{H}_g^{n-2}.
   \end{align*}
   
If $\partial_t^Z = 0$, we obtain,
      \begin{align*} 
       \mathcal{A}^{\prime\prime}(0) =&\int_Y |\nabla \varphi|^2  - \left(\Ric_g(\nu_Y, \nu_Y) + \|A_Y\|^2 + \partial_{\nu_Y} \mu \right) \cdot \varphi^2 \,\mr d\mathcal{H}_g^{n-1}  \\
       + & \int_Z \left(  H_Z - \frac{H_S}{\sin J} -  (\cot J)H_Y - \frac{1}{\sin^2 J}\cdot \frac{\partial \mu_\partial}{\partial \n} \right)\cdot \varphi^2 \,\mr  d\mathcal{H}_g^{n-2}.
   \end{align*}
   \end{proof}
\end{lemma}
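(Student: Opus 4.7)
My plan is to differentiate the first variation formula
$$\mathcal{A}'_c(t) = \int_{Y_t} (H_{Y_t}-\mu)\varphi\, \mr d\mathcal{H}_g^{n-1} + \int_{Z_t} \frac{\cos J_t-\mu_\partial}{\sin J_t}\varphi\, \mr d\mathcal{H}_g^{n-2}$$
once more at $t=0$ and exploit the stationarity identities $H_Y=\mu$ on $Y$ and $\cos J=\mu_\partial$ on $Z$. Because the integrands of both terms vanish at $t=0$, the product rule leaves only the time derivatives of the integrands paired with $\varphi$, so I do not need to worry about how $\mathcal{H}^{n-1}_g(Y_t)$ or $\mathcal{H}^{n-2}_g(Z_t)$ vary. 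I will treat the interior and boundary contributions separately and then add them.

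For the interior contribution I will use the classical Jacobi-operator formula
$$\partial_t H_{Y_t}\big|_{t=0} = -\Delta_Y \varphi - (\|A_Y\|^2+\Ric_g(\nu_Y,\nu_Y))\varphi + \nabla^g_{\partial_t^Y}H,$$
together with $\partial_t\mu|_{t=0} = \varphi\,\partial_{\nu_Y}\mu + \nabla^g_{\partial_t^Y}\mu$. The tangential-derivative terms will cancel between $\partial_t H_{Y_t}$ and $\partial_t\mu$ since $H_Y=\mu$ on $Y$. Integration by parts over $Y$ produces $\int_Y|\nabla\varphi|^2$ in the bulk and a single boundary term $-\int_Z\varphi\,\partial_{\nu_Z}\varphi$, which I will keep in reserve to cancel a matching term from the boundary calculation.

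The boundary contribution is where the real work lies, and I expect this to be the main obstacle. I have to compute
$$\frac{d}{dt}\bigg|_{t=0}\!\left(\frac{\cos J_t - \mu_\partial\circ f_t}{\sin J_t}\right).$$
I will start from $\cos J_t = -g(\nu_{Y_t},\nu_S)$, decompose $\partial_t = \partial_t^Z + \tfrac{\varphi}{\sin J}\n$ on $Z$, and expand $\partial_t\nu_S$ on $S$ and $\partial_t\nu_{Y_t}$ on $Y$ using the second fundamental forms $A_{\nu_S}$ and $A_{\nu_Y}$. The outputs will involve $A_{\nu_S}(\n,\n)$, $A_{\nu_Y}(\nu_Z,\nu_Z)$ and $\partial_{\nu_Z}\varphi$. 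To convert these into $H_S$, $H_Y$ and $H_Z$ I will use the trace identity obtained from
$$\nu_S = -\cos J\,\nu_Y + \sin J\,\nu_Z,$$
viewed as an identity among unit normals of $Z$ inside $M$; taking traces on $Z$ of the associated shape operators and using $H_Y = \tr_{g_Z}A_{\nu_Y} + A_{\nu_Y}(\nu_Z,\nu_Z)$ and $H_S = \tr_{g_Z}A_{\nu_S} + A_{\nu_S}(\n,\n)$ produces
$$\sin J\,H_Z - H_S - \cos J\,H_Y = -A_{\nu_S}(\n,\n) - \cos J\,A_{\nu_Y}(\nu_Z,\nu_Z).$$
Combining this identity with the derivatives above should yield the compact expression
$$\partial_t\cos J_t\big|_{t=0} = \sin J\,H_Z - H_S - \cos J\,H_Y + \sin J\,\partial_{\nu_Z}\varphi + \sin J\,\nabla_{\partial_t^Z}J.$$
Then differentiating $\mu_\partial\circ f_t$ contributes $\tfrac{1}{\sin J}\partial_{\n}\mu_\partial\cdot\varphi$ plus a tangential piece $\nabla_{\partial_t^Z}\mu_\partial$; the relation $\mu_\partial=\cos J$ on $Z$ at $t=0$ turns this tangential piece into $-\sin J\,\nabla_{\partial_t^Z}J$, so that one of the two $\nabla_{\partial_t^Z}J$ terms survives in the final formula as $2\nabla_{\partial_t^Z}J\cdot\varphi$.

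Finally I will assemble the pieces. The boundary term $\int_Z \partial_{\nu_Z}\varphi\cdot\varphi$ coming from the boundary derivative will precisely cancel the $-\int_Z\partial_{\nu_Z}\varphi\cdot\varphi$ left from the interior integration by parts, leaving only curvature quantities and the tangential $J$-derivative. Dividing by $\sin J$ where needed and reorganizing yields the stated formula; in particular if $\partial_t^Z=0$ the $\nabla_{\partial_t^Z}J$ term drops out, giving \eqref{eq: second_variation_proper}. The main technical obstacle is the bookkeeping in the $\partial_t\cos J_t$ calculation, where several tangential and normal contributions must be tracked simultaneously on three different hypersurfaces ($Y$, $S$, and $Z$); the trace identity above is the key algebraic step that makes the intrinsic curvatures $H_Z, H_S, H_Y$ emerge.
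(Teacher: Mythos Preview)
Your proposal is correct and follows essentially the same route as the paper's proof: you differentiate the first variation at the stationary point, use the Jacobi-operator formula in the interior, compute $\partial_t\cos J_t$ via $-g(\nu_{Y_t},\nu_S)$ with the decomposition $\partial_t=\partial_t^Z+\tfrac{\varphi}{\sin J}\n$, invoke the same trace identity from $\nu_S=-\cos J\,\nu_Y+\sin J\,\nu_Z$ to express $A_{\nu_S}(\n,\n)+\cos J\,A_{\nu_Y}(\nu_Z,\nu_Z)$ in terms of $H_Z,H_S,H_Y$, and then cancel the $\partial_{\nu_Z}\varphi$ boundary term against the one from integration by parts. Your explanation of how the factor $2\nabla_{\partial_t^Z}J$ arises (one copy from $\partial_t\cos J_t$, one from converting $\nabla_{\partial_t^Z}\mu_\partial=-\sin J\,\nabla_{\partial_t^Z}J$ via the stationarity relation $\mu_\partial=\cos J$) matches the paper's computation exactly.
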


{
Note that the last second variation formula in Lemma \ref{lemma: second_variation_formula} above requires $\partial_t^Z=0$. However, any normal vector field can be extend to this kind of admissible vector fields.
\begin{lemma}\label{lem:vector extension}
With notations as above, for given $\varphi\in C^\infty(Y)$, there exists a vector $X$ in $M$ such that
    \begin{itemize}
        \item $X\cdot \nu_Y=\varphi$ for any given $\varphi\in C^\infty(Y)$;
        \item $X|_{\partial M}\in T(\partial M)$;
        \item $X|_{\partial Y}$ is normal to $\partial Y$.
    \end{itemize}
\end{lemma}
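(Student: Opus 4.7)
The plan is to construct $X$ as a sum $X = A + B$, where $A$ realizes the boundary value on $Z = \partial Y$ that is forced by the three requirements, and $B$ is a correction term which enforces the normal--component identity on all of $Y$ while vanishing on $S = \partial M$. I would first observe that on $Z$ the three conditions uniquely determine $X$: the tangency to $\partial M$ and the normality to $\partial Y$ together restrict $X|_Z$ to $T_zS \cap (T_zZ)^\perp = \mathbb R\,\mathbf n$, and then the decomposition $\nu_Y = \cos(J)\,\nu_S + \sin(J)\,\mathbf n$ on $Z$ combined with $X\cdot \nu_Y = \varphi$ pins down the coefficient, giving $X|_Z = (\varphi/\sin J)\,\mathbf n$.

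To realize this boundary datum by a globally defined vector field, I would extend $\mathbf n$ from $Z$ to a smooth vector field $\widetilde{\mathbf n}$ on a neighborhood of $Z$ in $M$, arranged to be tangent to $S$ at every point of $S$, via a partition--of--unity extension on $S$ followed by constant extension along a collar of $S$ in $M$. Similarly extend $\varphi/\sin J$ from $Z$ to a smooth function $f$ on the same neighborhood. Multiplying by a cutoff $\chi$ equal to $1$ near $Z$ and compactly supported in the neighborhood, set $A := \chi f\,\widetilde{\mathbf n}$, extended by zero to $M$. By construction $A$ is smooth on $M$, $A|_S \in TS$, and $A|_Z = (\varphi/\sin J)\,\mathbf n$; in particular $A\cdot \nu_Y = \varphi$ on $Z$.

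For the correction, note that $h := \varphi - (A\cdot \nu_Y)|_Y$ is smooth on $Y$ and vanishes on $Z$ by the previous step. Extend $h$ to a smooth function $\widetilde h$ on a tubular neighborhood of $Y$ in $M$ so that $\widetilde h|_S \equiv 0$; the vanishing of $h$ on $Z$ ensures compatibility of the two prescriptions at the corner. Extend $\nu_Y$ to a smooth vector field $N$ on the same neighborhood and set $B := \widetilde h\,N$. Then $X := A + B$ satisfies all three required properties by direct verification: $X\cdot \nu_Y = A\cdot \nu_Y + \widetilde h = \varphi$ on $Y$; $X|_S \in TS$ since both $A|_S \in TS$ and $B|_S = 0$; and $X|_Z = A|_Z \in \mathbb R\,\mathbf n$, hence normal to $Z$.

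The main technical obstacle is arranging the various extensions so that the collar structure of $S \subset M$ and the tubular neighborhood of $Y \subset M$ mesh consistently near the corner $Z = Y \cap S$. I would resolve this by setting up Fermi--type coordinates adapted to the corner in which distance--to--$S$ and distance--to--$Y$ become independent smooth functions near $Z$; once such coordinates are fixed, the required extensions of functions and vector fields reduce to a routine partition--of--unity construction, since the datum $h$ on $Y$ and the datum $0$ on $S$ are explicitly compatible on their intersection $Z$.
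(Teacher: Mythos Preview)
Your proof is correct. The paper takes a slightly different, more compact route: instead of building $X$ in two stages (boundary datum $A$ on $Z$, then normal correction $B$), it writes down an explicit vector field directly on $Y$, namely $X=\varphi\,\nu_Y+(\varphi\cot J)\,\widetilde{\nu_Z}$, where $\widetilde{\nu_Z}$ is any smooth extension of $\nu_Z$ to a tangent field on $Y$ (the printed $\tan J$ appears to be a slip; with $\cot J$ one checks on $Z$ that $\varphi\,\nu_Y+\varphi\cot J\,\nu_Z=\tfrac{\varphi}{\sin J}\mathbf n$). Since the added term is tangent to $Y$, the identity $X\cdot\nu_Y=\varphi$ holds on all of $Y$ for free, and on $Z$ the vector is already a multiple of $\mathbf n$; the paper then simply says ``extend to $M$'' keeping tangency along $\partial M$. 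Your construction trades this one-line formula for a more systematic extension procedure and is considerably more explicit about the corner compatibility (the vanishing of $h$ on $Z$ matching the vanishing of $\widetilde h$ on $S$) and about the Fermi-coordinate machinery needed to mesh the collar of $S$ with the tubular neighborhood of $Y$; the paper leaves all of that implicit. Both arguments are valid; yours is longer but more self-contained, while the paper's is quicker once one spots the right tangential correction.
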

\begin{proof}
Recall that $\n$ is the unit outward normal vector field of $\partial Y$ in $\partial M$. Let $\wti \nu_Z$ be the vector field on $Y$ such that $\wti \nu_Z|_{\partial Y}=\nu_Z$. Consider the vector field $X=\varphi\nu_Y+(\varphi \tan J_t) \wti \nu_Z$. Obvisouly, $X|_{\partial Y}$ is parallel to $\n$ on $\partial Y$ and $X\cdot \nu_Y=\varphi$ on $Y$. One can extend it to be a vector field on $M$ satisfying all the conditions.
\end{proof}
}

\section{Maximum principal of  the capillary mu-bubble } \label{sec: MP}

In this section, we will detail the maximum principal (inspired by White \cite{White-Maximum-priciple}) around the artificial corner of the capillary $\mu$-bubble, which forms part of the proof of Lemma \ref{lemma: existence_mu_bubble} in Section \ref{sec: mu-bubble}. 

\begin{claim}\label{claim: mp}
    With the same notations and assumptions as in Lemma \ref{lemma: existence_mu_bubble}. If $\{\Omega_k\}_{k=0}^\infty$ is a minimizing sequence of $\mathcal A_c$, then there exists an open neighborhood $\mathcal{B}_i \subset M$ of $B_i$ ($i=1,2$) such that
    \begin{equation*}
        \mathcal{A}_c(\Omega_k \cup \mathcal{B}_1\setminus \mc B_2) <  \mathcal{A}_c(\Omega_k).
    \end{equation*}
\end{claim}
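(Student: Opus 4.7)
The plan is to construct $\mathcal{B}_1$ and $\mathcal{B}_2$ explicitly as thin half-collars in a Fermi neighborhood of $\partial M$, and to exploit the positivity of $H_S$ together with $\mu_\partial \approx \pm 1$ near $B_1, B_2$ to force the strict decrease. By the symmetry of $\mathcal{A}_c$ under the correspondence $(\Omega, \mu_\partial) \leftrightarrow (M \setminus \Omega, -\mu_\partial)$, the constructions for $\mathcal{B}_1$ (enlarging $\Omega_k$ near $B_1$) and $\mathcal{B}_2$ (shrinking it away from $B_2$) are dual, so it suffices to discuss $\mathcal{B}_1$.

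For the construction, I would first use the continuity of $F$ and the prescription $\mu_\partial = \cos(\Psi \circ F)$: since $F(B_1)$ lies in a small neighborhood of the south pole $-p \in \sph^{n-1}$ where $\mu_\partial = 1$, for any preassigned $\eta > 0$ there is an open enlargement $B_1 \Subset B_1^+ \subset \partial M$ on which $\mu_\partial \geq 1 - \eta$. Fix a Fermi collar $\Phi: \partial M \times [0, s_0) \hookrightarrow M$ of $\partial M$ in $M$, and define
$$\mathcal{B}_1 := \Phi\big(B_1^+ \times [0, \delta)\big)$$
for $\delta > 0$ to be chosen small, so that its inner reduced boundary in $\mathring M$ is the union of the ``lid'' $L = \Phi(B_1^+ \times \{\delta\})$ and the ``side wall'' $W = \Phi(\partial B_1^+ \times [0, \delta))$.

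For the strict decrease, I would use the Caccioppoli-set identity
$$\mathcal{A}_c(\Omega_k \cup \mathcal{B}_1) - \mathcal{A}_c(\Omega_k) = \Big[\mathcal{H}^{n-1}\big((\partial \mathcal{B}_1 \cap \mathring M) \setminus \Omega_k\big) - \mathcal{H}^{n-1}(Y_k \cap \mathcal{B}_1)\Big] - \int_{(\mathcal{B}_1 \setminus \Omega_k) \cap \partial M} \mu_\partial \, \mr d\mathcal{H}^{n-1},$$
where $Y_k := \partial^*\Omega_k \cap \mathring M$. The Fermi volume-form expansion $\mathcal{H}^{n-1}(L) = \int_{B_1^+}(1 - \delta H_S + O(\delta^2))\, \mr d\mathcal{H}^{n-1}$ and $H_S > 0$ give $\mathcal{H}^{n-1}(L) \leq \mathcal{H}^{n-1}(B_1^+) - c_1 \delta$ with $c_1 = \int_{B_1^+} H_S$, while $\mathcal{H}^{n-1}(W) \leq c_2 \delta$ is controlled by $\mathcal{H}^{n-2}(\partial B_1^+)$. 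Combined with the lower bound $\mu_\partial \geq 1 - \eta$ on $B_1^+$, a direct balance of these estimates forces the right-hand side to be strictly negative in the extremal configurations where $\Omega_k$ is essentially wetting only $B_1$.

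The hardest part is the intermediate case where $\Omega_k \cap \mathcal{B}_1$ and $Y_k \cap \mathcal{B}_1$ are both comparable in size to $\mathcal{B}_1$: here the ``lost'' interior boundary $\mathcal{H}^{n-1}(Y_k \cap \mathcal{B}_1)$ partly cancels the gain from the new lid $L$, and one must argue that the residue still has a definite sign. This is handled by a case split using the uniform area bound $\mathcal{H}^{n-1}(Y_k) \leq \mathcal{A}_c(\Omega_k) + \mathcal{H}^{n-1}(\partial M)$ on the minimizing sequence, combined with a refined choice of $\mathcal{B}_1$ as the subgraph of a height function $h(\bar x)$ vanishing on $\partial B_1^+$ and tuned so that $\partial \mathcal{B}_1 \cap \mathring M$ meets $\partial M$ at the capillary angle $\arccos(\mu_\partial)$; this is precisely the content of Steps 4 and 5 in the proof of Theorem 1.3 of \cite{Wu_capillarysurfaces}, which applies in all dimensions and is the technical input we invoke.
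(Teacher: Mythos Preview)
Your explicit cylindrical Fermi collar does not work, even in the ``extremal'' configuration you single out. With a vertical side wall $W=\Phi(\partial B_1^+\times[0,\delta))$ the balance you write down is
\[
\eta\,\mathcal H^{n-1}(B_1^+)+(c_2-c_1)\delta,
\]
and this is negative only if $c_1=\int_{B_1^+}H_S$ dominates $c_2\approx \mathcal H^{n-2}(\partial B_1^+)$. For a small geodesic ball of radius $r$ one has $c_1\sim H_S(p)\,r^{n-1}$ while $c_2\sim r^{n-2}$, so $c_1<c_2$ and the sign is wrong. Geometrically, the cylinder meets $\partial M$ at angle $\pi/2$, whereas the capillary angle near $B_1$ is $\approx 0$ (because $\mu_\partial\approx 1$); this angle mismatch is exactly why the vertical wall term swamps the mean-curvature gain. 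So the ``direct balance'' you claim does not hold, and the difficulty is already present in the extremal case, not only in the intermediate one.

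The paper does not attempt an area comparison at all; it uses a calibration. One builds a foliation $S_{s',t}$ of a neighborhood of $B_1$ by perturbed copies of $\partial M$ (via $\exp_x\big((s'-t f(x))\nu_S\big)$ with $f|_{\partial B_1}=1$, $\nabla f|_{\partial B_1}\ne 0$), so that the leaf normals $\nu$ satisfy two conditions: (i) $\operatorname{div}\nu=H_{S_{s',t}}$ has a definite sign inherited from $H_{\partial M}>0$, and (ii) on $\partial M$ the angle inequality $\nu\cdot\nu_{\partial M}<\mu_\partial$ holds. Writing $V=\overline{\mathcal B_1\setminus\Omega}$ and applying the divergence theorem to $\nu$ on $V$ then gives
\[
\mathcal A_c(\Omega\cup\mathcal B_1)-\mathcal A_c(\Omega)\le \int_V\operatorname{div}\nu<0
\]
for \emph{every} competitor $\Omega$, with no case split and no use of the uniform area bound on the minimizing sequence. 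Your final ``refined choice'' (a subgraph meeting $\partial M$ at the capillary angle) is precisely condition (ii); once you impose it, the correct way to finish is the divergence-theorem calibration above, not the lid-versus-wall bookkeeping you wrote out.
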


\begin{proof}


 Without loss of generality, we can assume that each $\Omega$ in the minimizing sequence $\{\Omega_k\}_{k=0}^\infty$ has a smooth boundary. Now, we focus on the case when $i=1$. In other words, adding a neighborhood $\mathcal{B}_1$ of the bottom part $B_1$ to $\Omega$ will result in a decrease in the energy $\mathcal{A}_c$. A similar argument applies to the top part $B_2$.
 

We first isometrically embed $M$  into a closed Riemannian manifold $\wti M$ of the same dimension with $M$. 
Denote by $r_0$ the injective raduis of $\partial M$ in $\wti M$. 
Let $\nu_S$ be the unit, outward vector of $\partial M$. Consider the following family 
\[S_{s,t}(x) := \{\exp_x\big((s-tf(x))  \nu_S(x)\big); x\in \partial M\}, \quad  s,t\in (-r_0/4,r_0/4),  \] 
where $\exp$ is the exponential map in $\wti M$, and $f\in C^\infty(\partial M)$ satisfy the following conditions:
\begin{itemize}
    \item $0\leq f<2$ everywhere, $f|_{\partial B_1}=1$, $f|_{B_1\setminus \partial B_1}>1$; $f|_{M\setminus \oli B_1}<1$;
    \item  $f=0$ outside a small neighborhood of $B_1$;
    \item $\nabla f(x)\neq 0$ for all $x\in \partial B_1$.
\end{itemize}
Then $S_{s,t}$ is an embedded surface in $\wti M$ and bound a domain that intersects $M$ (we continuously choose domains so that $S_{0,0}$ bounds $M$). Denote by $\nu_{s,t}$ the unit outward normal vector field of $S_{s,t}$. By the assumption of $\nabla f\neq 0$ on $B_1$, we get that $S_{s,t}$ intersects $B_1$ transversely around $\partial B_1$ for $(s,t)\neq (0,0)$. Indeed, for $s_1\neq 0$, $t_1\in [0,r_0)$ and $x_1\in \partial B_1$ with $-s_1f(x_1)+t_1\zeta(x_1)=0$, then $s_1=t_1$. A standard computation gives that 
\[   \nu_{s_1,s_1}(x_1)\cdot\nu_{0,0}(x_1)= \frac{1}{\sqrt{1+|t_1\nabla f|^2}}<1=\mu_{\partial}\quad \text{ for all } x\in \partial B_1.\]
Now we pick $0<s'<r_0/8$ such that for all $t\in (0,2s_0)$, 
\[H|_{S_{s',t}}=\dv_{S_{s',t}}\nu_{s',t}\geq \frac{1}{2}\inf_{x\in\partial M}H_{\partial M}.\]
Observe that $S_{s',s'}\cap \partial M=\partial B_1$. Then pick $\delta>0$ small enough such that for all $t\in[s',t']$ ($t':=s'+\delta$), 
\begin{gather*}
     \nu_{s',t}(y)\cdot \nu_{0,0}(y)<\mu_{\partial} \quad \text{ whenever } y\in S_{s',t}\cap \partial M.   
\end{gather*}  
Let $\mc B_1=\cup_{t\in[0,t']}S_{s',t}\cap M$. Since $f\geq 0$ everywhere, then the vector field defined by
\[   \nu(x):=\nu_{s',t} \quad \text{ whenever } x\in S_{s',t}\]
is smooth. 
Note that $s'-0\cdot f >0$, $s'-s'f < 0$ on $B_1\setminus \partial B_1$. We conclude that $\mc B_1$ contains a small neighborhood of $B_1$. 

For any $\Omega\supset B_1$ that intersects $S_{s',s'}$ transversely, denote by $V:=\oli{\mc B_1\setminus \Omega}$. Then by the divergence theorem
\begin{align*}
      & \ \ \ \ \mathcal{A}_c(\Omega \cup \mathcal{B}_1) - \mathcal{A}_c(\Omega) \\
      &=\mathcal{H}_g^{n-1}(S_{s',s'}\cap \partial V)  - \mc H^{n-1}_g(\partial V\setminus (S_{s',s'}\cup\partial M))-\int_{\partial M \cap \partial V} \mu_{\partial}\\
      &\leq \int_{ S_{s',s'}\cap \partial V} \nu\cdot \nu_{s',s'} - \int_{\partial V\setminus (S_{s',s'}\cup\partial M) } \nu \cdot (-v_{\partial \Omega})  - \int_{\partial M \cap \partial V} \mu_{\partial} \\
      & \leq  \int_{V} \Div\nu + \int_{\partial M \cap \partial V} \nu \cdot \nu_{\partial M} - \mu_{\partial}\\
       &\leq  \int_{V} \Div \nu < 0.
\end{align*}

\end{proof}

\section{Shi-Tam inequality and its extension} \label{sec: Shi-Tam}

The proof of the rigidity Theorem \ref{mainthm: scalar_mean_curvature_comparsion}, \ref{thm: generalized_scalar_mean_comparsion}, and \ref{thm:Lipschitz} utilize the Shi-Tam inequality and its extension. Therefore, for the readers' convenience, we will review the Shi-Tam inequality for the case $n=3$ in \cite{Shi_Tam}, as well as its extension for $4 \leq n \leq 7$ from \cite{Shi_Tam_extension}.

\begin{theorem}[Shi-Tam, see \cite{Shi_Tam}*{Theorem 1}] \label{thm: shi-tam_inequality}
    Suppose that $(M^n, \partial M, g)$ is a smooth,  compact, \textbf{spin} Riemannian manifold with nonnegative scalar curvature $\Sc_g \geq 0$ and mean convex boundary $H_{\partial M} > 0$. If $\partial M$ consists of connected components $\{\Sigma_i\}_i^k$ and each connected component $\Sigma_i$ can be isometrically embedded in $\mathbb{R}^n$ as a strictly convex hypersurface, and we denote the mean curvature by $\hat{H}_{\Sigma_i}$ in $\mb R^n$, then
    \begin{equation*}
        \int_{\Sigma_i} H_{\Sigma_i} \,\mr d\mathcal{H}_g^{n-1} \leq \int_{\Sigma_i} \hat{H}_{\Sigma_i}\,\mr d\mathcal{H}_g^{n-1}.
    \end{equation*}
    Moreover, the equality holds for some boundary $\Sigma_i$ if and only if $(M, \partial M, g)$ is isometric to a domain in $\mathbb{R}^n$. 
\end{theorem}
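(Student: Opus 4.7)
The plan is to follow the original Shi--Tam strategy: construct an asymptotically flat extension of $(M,g)$ with nonnegative scalar curvature that matches the prescribed boundary mean curvature, and then extract the inequality from the Positive Mass Theorem. Fix one boundary component $\Sigma_i$ and work on it; the other components can be handled identically.

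First I would set up the geometry outside the convex hypersurface. Use the isometric embedding $\Sigma_i \hookrightarrow \mb R^n$ as a strictly convex boundary of a convex domain $\Omega_i$, and foliate the exterior $\mb R^n \setminus \Omega_i$ by the parallel hypersurfaces $\Sigma_i^r$ at Euclidean distance $r$ from $\Sigma_i$; by strict convexity this is a smooth foliation with positive Euclidean mean curvature $H_0(r)>0$, and the Euclidean metric takes the warped form $g_E = \mr dr^2 + g_r$ on $\mb R^n \setminus \Omega_i$. I would then seek a deformed metric $\tilde g = u^2\, \mr dr^2 + g_r$ with $u>0$ satisfying the conditions $\Sc_{\tilde g} = 0$ on $\mb R^n \setminus \Omega_i$ and $u|_{r=0} = H_0/H_{\Sigma_i}$, so that the mean curvature of $\Sigma_i$ in $\tilde g$ (with respect to the outward normal) equals the mean curvature $H_{\Sigma_i}$ coming from $M$. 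The zero scalar curvature equation becomes a first--order quasilinear parabolic equation in $r$ on each leaf, and the convexity of the foliation gives the parabolicity needed for long--time existence and the asymptotics $u \to 1$ as $r \to \infty$.

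Next I would glue $(M,g)$ to $(\mb R^n \setminus \Omega_i, \tilde g)$ along $\Sigma_i$, obtaining an asymptotically flat manifold $(\widetilde M, \widetilde g)$ with nonnegative scalar curvature in the distributional sense (the matching of the induced metrics is automatic, and the matching of mean curvatures ensures that no negative singular scalar curvature is produced along the seam). On the extension, Shi--Tam's monotonicity computation shows that the quantity
\begin{equation*}
    Q(r) \coloneqq \int_{\Sigma_i^r} (H_0 - H_{\tilde g})\, \mr d\mathcal{H}^{n-1}
\end{equation*}
is nondecreasing in $r$ when $\Sc_{\tilde g}=0$ and the leaves are convex, and its limit as $r\to\infty$ is a positive multiple of the ADM mass of $(\widetilde M, \widetilde g)$. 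The spin hypothesis on $M$ (inherited by $\widetilde M$) allows me to invoke the Positive Mass Theorem of Witten/Bartnik, yielding $m_{\mr{ADM}}(\widetilde M) \geq 0$; combining with $Q(0) \leq Q(\infty)$ gives
\begin{equation*}
    \int_{\Sigma_i}(\hat H_{\Sigma_i} - H_{\Sigma_i})\, \mr d\mathcal{H}_g^{n-1} = Q(0) \geq 0,
\end{equation*}
which is precisely the desired inequality.

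For rigidity, equality in the inequality forces $Q(0)=0$, hence $m_{\mr{ADM}}(\widetilde M)=0$ by monotonicity, and the rigidity of the Positive Mass Theorem forces $(\widetilde M,\widetilde g)$ to be isometric to flat $\mb R^n$; restricting this isometry to $M$ identifies it with the convex domain $\Omega_i \subset \mb R^n$. The main analytic obstacle is the second step: solving the parabolic equation for $u$ globally on $\mb R^n \setminus \Omega_i$ and verifying the precise asymptotic flatness rate needed to compute the ADM mass as a limit of $Q(r)$; the key monotonicity identity, obtained by differentiating $Q(r)$ and combining the Gauss equation with $\Sc_{\tilde g}=0$ and $\Sc_g \geq 0$ via a Newton--type inequality on the second fundamental form, is where the nonnegative scalar curvature hypothesis is consumed and is the geometric heart of the proof.
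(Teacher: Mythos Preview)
The paper does not prove this theorem; it is quoted in Appendix~\ref{sec: Shi-Tam} as a known result of Shi--Tam and used as a black box in the rigidity arguments. Your sketch is precisely the original Shi--Tam strategy (quasilinear parabolic flow on the exterior of the convex embedding, gluing along the boundary, monotonicity of the Brown--York--type integral, and the Positive Mass Theorem), so in that sense it matches what the paper \emph{cites} rather than proves.

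Two corrections are worth flagging. First, the direction of the monotonicity is reversed in your write-up: the functional
\[
Q(r)=\int_{\Sigma_i^r}(H_0-H_{\tilde g})\,\mr d\mathcal H^{n-1}
\]
is \emph{nonincreasing} in $r$, so one obtains $Q(0)\geq Q(\infty)=c_n\, m_{\mr{ADM}}\geq 0$; as written, ``$Q(0)\leq Q(\infty)$ and $Q(\infty)\geq 0$'' would not yield $Q(0)\geq 0$. Second, when $\partial M$ has several components, you cannot treat them one at a time: all the $\Sigma_j$ must be capped simultaneously by their own scalar-flat exteriors, producing a single manifold with $k$ asymptotically flat ends, and the Positive Mass Theorem is then applied to the end corresponding to $\Sigma_i$. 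With these two adjustments the outline is correct.
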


As the dimension $n=3$, all three-dimensional manifolds are spin, and the requirement of the embeddings for the boundaries in Theorem \ref{thm: shi-tam_inequality} is equivalent to positive Gauss curvature on the boundary $\partial M$. Hence, for any smooth, compact three-dimensional Riemannian manifold with nonnegative scalar curvature. If the boundary $\partial M$ has positive Gauss curvature, then Shi-Tam inequality (Theorem \ref{thm: shi-tam_inequality}) holds. 

However, in dimension $n > 3$, there are no analogous intrinsic conditions on the boundary of $(M^n, \partial M, g)$ that guarantee that its components embed
isometrically into $\mb  R^n$. Eichmair-Miao-Wang extends Shi-Tam inequality as follows.

\begin{theorem}[Eichmair-Miao-Wang, see \cite{Shi_Tam_extension}*{Theorem 2}]
    The conclusion of Theorem \ref{thm: shi-tam_inequality} remains valid if the assumption
that every boundary component embeds as a strictly convex hypersurface in
$\mb R^n$ is relaxed to the requirement that the boundary of $(M, \partial M, g)$ has positive
scalar curvature and that each boundary component is isometric to a mean convex, star-shaped hypersurface in $\mb R^n$. Moreover, the spin assumption can
be dropped in dimensions $3 \leq n \leq 7$.
\end{theorem}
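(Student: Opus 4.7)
The plan is to mimic the Shi--Tam argument, but replace the role of convexity of $\Sigma_i$ in $\mathbb R^n$ (which underlies the smooth foliation by parallel convex hypersurfaces) by a smooth foliation of the exterior region coming from a star-shaped flow, and then replace the spin positive mass theorem by its non-spin analog available for $3\leq n\leq 7$.

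First, for each boundary component $\Sigma_i$, isometrically identify it with a mean convex, star-shaped hypersurface in $\mathbb R^n$, bounding an exterior region $E_i\subset\mathbb R^n$. Since $\Sigma_i$ is star-shaped with $\hat H_{\Sigma_i}>0$, one can foliate $E_i$ by a smooth family $\{\Sigma_i^t\}_{t\geq 0}$ of star-shaped, mean-convex hypersurfaces (for instance by inverse mean curvature flow, or by the flow by $1/\hat H$, which Gerhardt/Urbas showed preserves star-shapedness and asymptotically rounds the leaves). Write the induced metric on $E_i$ in the resulting geodesic normal form $g_{\mathbb R^n}=dt^2+h_t$, so that $\Sigma_i^0=\Sigma_i$ and $(E_i,g_{\mathbb R^n})$ becomes asymptotically flat. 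Next, following Shi--Tam, solve the parabolic equation for a conformal factor $u(t,x)$ of the warped ansatz $\bar g=u^2\,dt^2+h_t$, chosen so that $\Sc_{\bar g}\equiv 0$ on $E_i$ and the mean curvature of $\Sigma_i^0$ in $(E_i,\bar g)$ equals $H_{\Sigma_i}$ (the mean curvature coming from $(M,g)$). Existence of a positive solution, and the asymptotics $u=1+\tfrac{m_0}{t^{n-2}}+O(t^{-(n-1)})$ at infinity, follow by the same maximum principle plus barrier argument as in \cite{Shi_Tam}, using only positivity of $\hat H$ along the leaves.

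Now glue $(M,g)$ to each $(E_i,\bar g)$ along $\Sigma_i$ to obtain a complete asymptotically flat manifold $(\widetilde M,\widetilde g)$ with $\Sc_{\widetilde g}\geq 0$ in the distributional sense and possibly a corner across each $\Sigma_i$. Because the mean curvatures match in the sense $H_{\Sigma_i,g}=H_{\Sigma_i,\bar g}$ by construction, the distributional scalar curvature carries no negative singular part at the corner, so Miao's mollification (or the Miao--McFeron--Szekelyhidi Ricci--DeTurck smoothing) yields a sequence of smooth metrics with $\Sc\geq -\varepsilon_k\to 0$ converging to $\widetilde g$ and preserving the ADM masses. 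Shi--Tam's monotonicity computation along the foliation gives
\[
m_{\mathrm{ADM}}(\widetilde M,\widetilde g)\;\leq\; \frac{1}{2(n-1)\omega_{n-1}}\sum_i\int_{\Sigma_i}(\hat H_{\Sigma_i}-H_{\Sigma_i})\,d\mathcal H^{n-1}_g,
\]
with equality iff $u\equiv 1$ along every leaf. Applying the non-spin positive mass theorem, which holds in dimensions $3\leq n\leq 7$ via the Schoen--Yau minimal slicing/descent argument, we get $m_{\mathrm{ADM}}\geq 0$, and hence the claimed inequality. The rigidity statement follows by tracing back equalities: $m_{\mathrm{ADM}}=0$ forces $(\widetilde M,\widetilde g)\cong(\mathbb R^n,g_{\mathbb R^n})$ by the rigidity of PMT, equality for a single $\Sigma_i$ forces $u\equiv 1$ on the corresponding exterior, and then the $(M,g)$ side must embed isometrically as the domain bounded by $\Sigma_i$ in $\mathbb R^n$.

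The main obstacle, compared to the classical convex case, is maintaining smoothness and the required asymptotics of the foliation $\{\Sigma_i^t\}$ when $\Sigma_i$ is only star-shaped and mean-convex rather than convex; this is exactly where one invokes the Gerhardt/Urbas-type long-time existence and $C^\infty$ convergence to large round spheres. The second delicate point is handling the corner without a spin assumption: one must either check that Miao's $\varepsilon$-smoothing preserves both asymptotic flatness and the nonnegativity of scalar curvature in the weak sense well enough to feed into the Schoen--Yau PMT, or equivalently verify that the $\mu$-bubble/MOTS-slicing proof of PMT tolerates a Lipschitz metric with matched mean curvatures. The rest of the argument is essentially a transcription of \cite{Shi_Tam} with the new foliation in place of parallel convex surfaces.
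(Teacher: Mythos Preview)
The paper does not supply a proof of this theorem at all: it is quoted in Appendix~\ref{sec: Shi-Tam} purely as a black-box input from \cite{Shi_Tam_extension}*{Theorem~2}, alongside the original Shi--Tam inequality, and is invoked in the proofs of Theorems~\ref{mainthm: geometric_verison}, \ref{mainthm: Listing_type}, and \ref{thm:Lipschitz2} only through the phrase ``by \cite{Shi_Tam}*{Theorem~1} for $n=3$ and \cite{Shi_Tam_extension}*{Theorem~2} for $n\leq 7$.'' So there is no ``paper's own proof'' to compare against.

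Your outline is in fact a faithful sketch of the Eichmair--Miao--Wang argument itself: foliate each exterior by the Gerhardt/Urbas expanding flow from a star-shaped mean-convex hypersurface, run the Shi--Tam parabolic conformal deformation along that foliation, glue, smooth the corner \`a la Miao (or McFeron--Sz\'ekelyhidi), and apply the Schoen--Yau positive mass theorem for $3\leq n\leq 7$. Two remarks. First, the monotonicity and positive-mass step are carried out \emph{end by end}, so the displayed inequality should read, for each $i$ separately,
\[
m_{\mathrm{ADM}}^{(i)} \leq c_n\int_{\Sigma_i}(\hat H_{\Sigma_i}-H_{\Sigma_i})\,\mr d\mathcal H_g^{n-1},
\]
rather than a sum. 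Second, you never explain where the hypothesis that $\partial M$ has \emph{positive intrinsic scalar curvature} enters. In the original convex Shi--Tam setup this is automatic (convex hypersurfaces in $\mathbb R^n$ have positive sectional curvature), but in the star-shaped case it is an honest assumption: it is needed so that the Shi--Tam parabolic equation for $u$---whose reaction term involves the intrinsic scalar curvature of the leaves---admits the barrier/maximum-principle argument giving global existence, positivity, and the asymptotic expansion $u\to 1$ at infinity. Without it the step ``existence of a positive solution \ldots\ follow by the same maximum principle plus barrier argument as in \cite{Shi_Tam}'' is not justified. Filling in that point, your sketch is correct and matches the cited source.
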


Finally, we would like to recall the total mean curvature conjecture as follows.

\begin{conjecture}[Gromov, see \cite{Gromov_four_lectures}*{section 3.12.2}]\label{conj: total_mean_curvature}
    Suppose that $(M^n, \partial M, g)$ is a smooth, compact Riemannian manifold with scalar curvature $\Sc_g \geq -\sigma$ on $M$. If $H_{\partial M}$ is the mean curvature of $\partial M$ in $M$, then there exists a constant $c(\sigma, g_{\partial M})$ such that
    \begin{equation*}
        \int_{\partial M }H_{\partial M} \,\mr  d \mathcal{H}_g^{n-1} \leq c(\sigma, g_{\partial M}).
    \end{equation*}
\end{conjecture}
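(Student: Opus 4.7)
The plan is to mimic, in a modified form, the capillary $\mu$-bubble plus dimension reduction scheme used in the paper, combined with a conformal absorption of the negative scalar curvature lower bound. The expected outcome is that one reduces the general lower bound $\Sc_g\geq -\sigma$ to a nonnegative scalar curvature problem on a lower dimensional object, at which point the $\sigma=0$ bound of Shi--Wang--Wang--Zhu (and Corollary~\ref{corollary: Best_fill_in}) can be applied inductively.

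First, I would replace the minimal capillary bubble functional of Section~\ref{sec: mu-bubble} by a prescribed-mean-curvature functional
\[
 \mc A_{c,\sigma}(\Omega)=\mc H^{n-1}_g(\partial^*\Omega\cap\mathring M)-\int_\Omega \mu\,\mr d\mc H^n_g-\int_{\partial^*\Omega\cap \partial M}\mu_\partial\,\mr d\mc H^{n-1}_g
\]
with constant bulk potential $\mu=\sqrt{\sigma(n-1)}$, chosen so that the Schoen--Yau identity
\[
-\Ric(\nu_Y,\nu_Y)-\|A\|^2=-\tfrac12\Sc_g+\tfrac12\Sc_{g_Y}-\tfrac12\|A\|^2
\]
combined with $\|A\|^2\geq H_Y^2/(n-1)=\mu^2/(n-1)$ and $\Sc_g\geq -\sigma$ makes the two $\sigma$-contributions cancel exactly, leaving only the favourable term $\tfrac12\Sc_{g_Y}$ in the stability inequality on a bubble. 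I would then feed a candidate boundary map $F\colon \partial M\to \sph^{n-1}$ (built purely from the intrinsic geometry of $g_{\partial M}$, e.g.\ a degree one conformal model) into the capillary angle $\mu_\partial=\cos(\Psi\circ F)$ and, provided regularity is available (currently known for $n\leq 4$, as in Lemma~\ref{lemma: existence_mu_bubble}), obtain a smooth reduced hypersurface $(Y,\partial Y,g_f)$ of nonnegative scalar curvature after the Neumann conformal change from Section~\ref{sec: mean_curvature-degree}. An inductive extremality statement in the spirit of Proposition~\ref{prop:weak}, now formulated with the modified functional $\mc A_{c,\sigma}$, should rule out $\int_{\partial M}H_{\partial M}$ exceeding a threshold depending only on $\sigma$ and $g_{\partial M}$, by contradicting the nonvanishing of a boundary degree.

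The hard part, and the reason the conjecture remains open, is twofold. First, the boundary $\partial M$ is \emph{not} assumed mean convex in the conjecture, so both the existence step (where the barrier argument at $B_1,B_2$ used in Lemma~\ref{lemma: existence_mu_bubble} crucially relied on $H_{\partial M}>0$) and the prevention of interior contact with $\partial M$ must be redone; a plausible replacement is to subtract off a model cap glued along $\partial M$, converting the problem into one with artificial mean convex boundary, but then the glued metric is only Lipschitz and one needs a Miao-type smoothing that preserves the $-\sigma$ scalar curvature lower bound. Second, the constant $c(\sigma,g_{\partial M})$ in the conjecture is supposed to depend only on $\sigma$ and the boundary metric, whereas any conformal absorption of $-\sigma$ via $g\mapsto u^{4/(n-2)}g$ solves a PDE whose principal eigenvalue depends on the \emph{interior} geometry of $(M,g)$; circumventing this requires either a purely intrinsic $\mu$-bubble dichotomy, or a volume/diameter comparison controlling the relevant portion of $M$ in terms of $\sigma$ and $g_{\partial M}$ alone. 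I expect these two obstructions to interact, and the resolution to require genuinely new ideas beyond those employed in the nonnegative scalar curvature setting of this paper, together with the higher-dimensional regularity improvement of the capillary $\mu$-bubble needed to lift the restriction $n\leq 4$.
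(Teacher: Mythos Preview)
The statement you are addressing is labeled a \emph{conjecture} in the paper and is presented without proof; it appears only in Appendix~\ref{sec: Shi-Tam} as motivation, with the remark that Theorem~\ref{mainthm: scalar_mean_curvature_comparsion} can be viewed as a weaker variant. There is therefore no proof in the paper to compare against.

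Your write-up is not a proof but a research outline, and you yourself identify this: you name the two central obstructions (lack of mean convexity of $\partial M$, and the dependence of any interior conformal change on the bulk geometry rather than on $(\sigma,g_{\partial M})$ alone) and state that resolving them ``require[s] genuinely new ideas.'' That assessment is accurate. I would add one further gap in your sketch: even in the $\sigma=0$ case the paper's method does \emph{not} produce an integral bound $\int_{\partial M}H_{\partial M}\leq c(g_{\partial M})$; it produces a pointwise bound $\inf H_{\partial M}\leq (n-1)/\radspherek(\partial M)$ under the additional hypothesis that $\partial M$ carries a nonzero-degree map to a sphere. Your inductive step invokes Corollary~\ref{corollary: Best_fill_in}, but that corollary controls only the infimum of $H$, not its integral, and already presupposes the existence of such a map. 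Converting a bound on $\inf H$ into a bound on $\int H$ requires controlling the area of $\partial M$ in terms of $g_{\partial M}$, which is trivial, \emph{and} ruling out large positive excursions of $H$, which is not --- and is in fact the heart of the conjecture. So the proposed induction does not close even at the base step.
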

Hence, Theorem \ref{mainthm: scalar_mean_curvature_comparsion} can be viewed as a weaker variant of Conjecture \ref{conj: total_mean_curvature}. 

\bibliographystyle{amsalpha}
\bibliography{scalar_mean.bib}

\end{document}